\documentclass[12pt]{amsart}
\usepackage{amscd,amssymb,verbatim}
\theoremstyle{plain}

\usepackage{color}
\numberwithin{equation}{section}

\newcommand{\N}{\mathbb{N}}
\newcommand{\R}{\mathbb{R}}

\newcommand{\e}{\varepsilon}
\newcommand{\norm}[1]{\left\lVert#1\right\rVert}
\newcommand{\U}{\mathfrak{U}}

\newcommand{\s}{\sigma}
\newcommand{\la}{\langle}
\newcommand{\ra}{\rangle}
\newcommand{\Tree}{\mathcal{T}}
\newcommand{\B}{\mathcal{B}}
\newcommand{\Lc}{\mathcal{L}}
\DeclareMathOperator{\supp}{supp}
\DeclareMathOperator{\ran}{ran}

\newtheorem{theorem}{Theorem}[section]
\newtheorem{lemma}[theorem]{Lemma}
\newtheorem{remark}[theorem]{Remark}

\newtheorem{definition}[theorem]{Definition}
\newtheorem{proposition}[theorem]{Proposition}
\newtheorem{corollary}[theorem]{Corollary}
\newtheorem{notation}[theorem]{Notation}
\newtheorem*{claim}{\textit{Claim}}
\newtheorem*{question}{Question}

\begin{document}

\title[Copies of $\ell_2$ and $J$ in $JT$]{Embedding $\ell_2$ and $J$ into subspaces of $JT$ and $JT^*$}

\author{Spiros A. Argyros}
\address{National Technical University of Athens, Faculty of Applied Sciences, Department of Mathematics, Zografou Campus, 157 80, Athens, Greece.}
\email{sargyros@math.ntua.gr}

\author{Manuel Gonz\'alez}
\address{Department of Mathematics, Faculty of Sciences, University of Cantabria, Avenida de los Castros s/n, 39071 Santander, Spain.}
\email{manuel.gonzalez@unican.es}

\author{Pavlos Motakis}
\address{Department of Mathematics and Statistics, York University, 4700 Keele Street, Toronto, Ontario, M3J 1P3, Canada}
\email{pmotakis@yorku.ca}
\thanks{{\em 2020 Mathematics Subject Classification:} Primary 46B03, 46B20.
}

\date{\today}

\begin{abstract}
In the first part of the paper we show that every closed subspace of $JT$ or $JT^*$ contains $\ell_2$ complemented in $JT$ or $JT^*$ respectively, and $JT$ contains uncomplemented copies of $\ell_2$. As a result, the predual $\B$ of $JT$, as well as the spaces $JT$ and $JT^*$, are subprojective and superprojective. In the second part, we prove that every weakly Cauchy sequence that is not weakly convergent in $JT$ has a subsequence equivalent to the basis of $J$. Hence, every non-reflexive subspace of $JT$ contains an isomorphic copy of $J$, and every Schauder basic sequence in $JT$ has a subsequence which is equivalent either to the basis of $\ell_2$ or to the basis of $J$. Moreover these subspaces may be selected  to be complemented in $JT$.
\end{abstract}

\maketitle

\section{Introduction}\label{sect:intro}

The James tree space $JT$, introduced more than half a century ago by R.C. James \cite{James:74}, has become a central example for understanding the structure of Banach spaces beyond the classical ones. Its fundamental properties include  that $JT$  is separable, it contains no copies of $\ell_1$ and $JT^*$ is non-separable, as proved by James. Furthermore, $JT$ is the dual of a Banach space $\B$, and the quotient $JT^*/ \B$ is isometric to $\ell_2 (2^{\N})$ (due to J. Lindenstrauss and C. Stegall \cite{LS}), which yields the decomposition $JT^{**} = JT \oplus \ell_2 (2^{\N})$. Finally, every seminormalized weakly null sequence in $JT$ has a subsequence equivalent to $\ell_2$, a result established by I. Amemiya and I. Ito \cite{AI}.

The norm of $JT$ is defined on the space of real-valued functions on the dyadic tree $\Tree$ with finite support as follows:
\[ \norm{x}_{JT} = \sup_{\mathcal{P}} \left( \sum_{S \in \mathcal{P}} \left( \sum_{\alpha \in S} x(\alpha) \right)^2 \right)^{1/2} \]
where the supremum is taken over all families $\mathcal{P}$ of pairwise disjoint segments in $\Tree$.

The space $JT$ is defined as the completion of the space $(c_{00} (\Tree), \| \cdot\|_{JT} )$. Notably, for every branch $\s$ of $\Tree$, the sequence $(e_\alpha)_{\alpha \in \s}$ is isometrically equivalent to the basis of the James quasi-reflexive space $J$ \cite{James:51}. Consequently, $(e_\alpha)_{\alpha\in \s} \xrightarrow{w^*} e_\s^{**}$, and the family $(e_\s^{**})_{\s \in 2^\N}$ is isometrically equivalent to the standard basis of $\ell_2(2^\N)$. Furthermore, every sequence $(e_{\alpha_n})_n$ with $(\alpha_n)_n$ a sequence  of pairwise incomparable nodes is isometrically equivalent to the $\ell_2$ basis.

These properties are not merely specific to $JT$; rather, they reflect the underlying structure of the broader class of separable Banach spaces that do not contain $\ell_1$ and possess a non-separable dual, as indicated by the following theorem:

\begin{theorem}[Argyros, Dodos \& Kanellopoulos \cite{ADK}]
Let $X$ be a separable Banach space containing no copies of $\ell_1$ with $X^*$ non-separable. Then there exists a tree-basis $(e_\alpha)_{\alpha\in \Tree}$ in $X$ satisfying the following properties:
\begin{enumerate}
    \item For every branch $\s$ of $\Tree$, we have $(e_{\alpha})_{\alpha \in \s} \xrightarrow{w^*} x_\s^{**} \ne 0$.
    \item For every sequence $(\alpha_n)_n$ of pairwise incomparable nodes, we have $e_{\alpha_n} \xrightarrow{w} 0$.
    \item The family $(x_\s^{**})_{\s \in 2^\N}$ is $w^*$-accumulated to $0$ and it is unconditional.
\end{enumerate}
\end{theorem}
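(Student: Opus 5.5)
This is a cited theorem (Argyros–Dodos–Kanellopoulos), so the natural plan is to reconstruct its proof from descriptive set theory and Ramsey theory for trees.

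\emph{Step 1: a perfect set of weak$^*$-limits.} Since $X$ is separable and contains no $\ell_1$, the Odell--Rosenthal theorem puts every $x^{**}\in B_{X^{**}}$ in the first Baire class $\mathcal{B}_1(B_{X^*},w^*)$, and $B_{X^{**}}$ is the $w^*$-closure of $B_X$, sequentially. Since $X^*$ is non-separable, $X^{**}/X$ is non-separable in a strong sense. I would find an uncountable $\e$-separated set of elements $x^{**}_\xi\in B_{X^{**}}\setminus X$, each the $w^*$-limit of a sequence in $B_X$. By the Bourgain--Fremlin--Talagrand theory, $\overline{B_X}^{w^*}$ is a Rosenthal compact. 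Todorcevic's dichotomies for Rosenthal compacta, or a Stegall-type tree construction, then give a Cantor scheme $(x_\alpha)_{\alpha\in\Tree}\subset B_X$ with the following property: along every branch $\s$ the sequence $(x_\alpha)_{\alpha\in\s}$ is $w^*$-convergent to some $x^{**}_\s$, and the map $\s\mapsto x^{**}_\s$ is a continuous injection into $(X^{**},w^*)$ with $\|x^{**}_\s-x^{**}_\tau\|\ge\e$ for $\s\ne\tau$.

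\emph{Step 2: differences.} I would replace each limit by differences of pairs of branches, $x^{**}_\s-x^{**}_{\s'}$ with $\s,\s'$ splitting at the same node, and re-index these by a new dyadic tree. After refining, the resulting limits still have norm at least $\e$, which gives (1). Next, consider the map $\s\mapsto x^{**}_\s$, continuous into a compact metrizable $w^*$-set on which the limits cluster. Passing to a perfect subtree where all limits are close to one accumulation point and subtracting makes the family $w^*$-accumulate to $0$. In this step I would define $e_\alpha$ as the difference of two nodes above $\alpha$, so that each $e_\alpha$ is ``centered''.

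\emph{Step 3: Ramsey refinement.} I would apply the Milliken/Stern--Pawlikowski-type theorem for strong subtrees of the dyadic tree. This lets me pass to a regular dyadic subtree on which every sequence of pairwise incomparable nodes behaves uniformly. Suppose an incomparable sequence fails to be weakly null. By Rosenthal's $\ell_1$ theorem it has a weakly Cauchy subsequence with a nonzero $w^*$-limit. Colouring incomparable chains by whether they are weakly null, the homogeneous colour cannot be the bad one: an antichain limit is a limit of branch-limits, which tends to $0$ by the accumulation property and the centering. That gives (2). The basic-sequence (tree-basis) property comes from a standard Mazur-type gliding hump construction along a compatible enumeration of $\Tree$, taken at each step with small perturbations.

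\emph{Step 4: unconditionality, the main obstacle.} The hardest part is the unconditionality of $(x^{**}_\s)_{\s\in 2^\N}$ in (3). My first attempt would use the Argyros--Dodos--Kanellopoulos/Todorcevic results on unconditional families in Rosenthal compacta. For a dyadic tree of Baire-1 functions one finds, after a further perfect refinement, that the family $\{x^{**}_\s\}$ is ``$w^*$-discrete modulo $0$''. One then combines a Maurey--Rosenthal/Elton-type argument with a Ramsey pass on finite sets of branches of fixed splitting type, which yields uniform unconditional estimates. To show that sign changes cost a bounded constant, the intended tool is that each $x^{**}_\s$ is the $w^*$-limit of its own branch and that branches separate at incomparable nodes. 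I expect this to require the full strength of the Ramsey theory for perfect sets of branches, such as the Galvin/Blass theorem, not just node-level colourings.
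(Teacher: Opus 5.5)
The paper gives no proof of this theorem. It is quoted from Argyros--Dodos--Kanellopoulos in the introduction as background, so your sketch has to stand on its own. It has a genuine gap at its foundation, in how the non-separability of $X^*$ is used. In Step 1 you extract only norm-separated branch limits, with $\sigma\mapsto x^{**}_\sigma$ a $w^*$-continuous injection.

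That configuration already exists in $c_0(\Tree)$, whose dual is separable. Take $x_\alpha=\sum_{\beta\preceq\alpha}e_\beta$. The branch limits are the indicators $1_\sigma\in\ell_\infty(\Tree)\setminus c_0(\Tree)$, pairwise at distance $1$ and $w^*$-continuous in $\sigma$. Yet the conclusion fails in $c_0(\Tree)$. Its bidual ball is $w^*$-metrizable, and there a family of continuum many distinct nonzero vectors whose only $w^*$-accumulation point is $0$ would be a discrete subset of a second countable space, hence countable.

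So Steps 2--4 cannot derive (3) from the output of Step 1 alone, and the mechanism you propose fails concretely. Passing to perfect subtrees, taking differences of branch limits along continuous selections, and translating all keep $\sigma\mapsto x^{**}_\sigma$ $w^*$-continuous. For a continuous injective family on a perfect set, every member is itself an accumulation point: $x^{**}_{\sigma_n}\to x^{**}_\sigma\neq 0$ whenever $\sigma_n\to\sigma$, $\sigma_n\neq\sigma$. The set you yourself call compact metrizable cannot carry a family accumulating only at $0$.

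Node-level differences $e_\alpha$ help only if the two node sequences have different $w^*$-limits, and that discontinuity is exactly what you have not produced. Your derivation of (2) in Step 3 uses the accumulation at $0$, so it falls as well.

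The missing ingredient is topological rather than metric. Since $X^*$ is non-separable, $(B_{X^{**}},w^*)$ is a separable non-metrizable Rosenthal compact. By Todorcevic's theorem, in the canonical tree form of Argyros--Dodos--Kanellopoulos, there is a dyadic tree $(x_\alpha)$ in $B_X$ realizing a copy of the Alexandroff duplicate of $2^\N$ or of the split Cantor set. For such a tree, the limit $f_\sigma$ along the branch $\sigma$ differs from the limit $g_\sigma$ of incomparable sequences converging to $\sigma$ (from the appropriate side), while $f_{\sigma_n}$ and $g_{\sigma_n}$ have a common limit as $\sigma_n\to\sigma$.

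The right differences are between these two kinds of limit at the same $\sigma$. After re-indexing, let $e_\alpha$ be $x$ at a node on a branch minus $x$ at a nearby node branching off it. Then the branch limits are $f_\sigma-g_\sigma\neq 0$, antichains become weakly null, and $\{f_\sigma-g_\sigma\}$ accumulates only at $0$. In $JT$ this is visible directly: $g_\sigma=0$ and $f_\sigma=\sigma^{**}$.

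Finally, Step 4 is not an argument. Appealing to the Argyros--Dodos--Kanellopoulos results on unconditional families is appealing to the theorem being proved. Ramsey theorems for perfect sets are the right kind of tool, but they must be applied to the already $w^*$-discrete family. You would still need a sign-change estimate uniform in the number of branches, which you have not supplied.
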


At the boundaries of this class, one encounters exotic separable spaces, such as W.T. Gowers' tree space $GT$  \cite{G}, such that every infinite-dimensional closed subspace contains a tree-basis satisfying the aforementioned properties. This actually yields that $GT$ does not contain $\ell_1$ and has non-separable dual. For a comprehensive study of such spaces, we refer the reader to \cite{AAT}.

In the present paper, we complete the study of the isomorphic structure of the subspaces of $JT$, providing a further understanding of the isomorphic structure of its natural relatives $\mathcal{B}$, $JT^*$ and $JT^{**}$. As a consequence, we establish the subprojectivity and superprojectivity of $\mathcal{B}$, $JT$, and $JT^*$. In particular, we prove that every infinite-dimensional closed subspace of $JT$ or $JT^*$ contains an isomorphic copy of $\ell_2$ that is complemented in the respective ambient space. Furthermore, every non-reflexive subspace of $JT$ contains an isomorphic copy of the James space $J$ which is also complemented in $JT$. We also recall that, as shown in \cite{AAK:08}, every subspace of $JT$ with a non-separable dual contains a subspace isomorphic to $JT$ and complemented in $JT$.

The following theorem encapsulates these results and provides a complete description of the isomorphic structure of the subspaces of $JT$:

\begin{theorem}
The following facts hold:
\begin{enumerate}
    \item Every infinite-dimensional closed subspace of $JT$ or $JT^*$ contains a further subspace isomorphic to $\ell_2$ that is complemented in $JT$ or $JT^*$, respectively.
    \item Every closed non-reflexive subspace of $JT$ contains an isomorphic copy of $J$ which is complemented in $JT$.
    \item Every closed subspace of $JT$ with a non-separable dual contains a subspace $Y$ isomorphic to $JT$ that is complemented in $JT$.
\end{enumerate}
\end{theorem}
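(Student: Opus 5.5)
Part (3) is the result of \cite{AAK:08}, so I would simply quote it. The work lies in (1) and (2), and I would treat them separately, using the classical facts recalled above: the Amemiya--Ito theorem, the isometric $J$-structure along branches, and the decomposition $JT^{**}=JT\oplus\ell_2(2^{\N})$.

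For (1) in $JT$, let $Y$ be an infinite-dimensional closed subspace. Since $JT$ contains no $\ell_1$, Rosenthal's theorem gives a normalized weakly Cauchy sequence in $Y$. Differences of a suitable subsequence form a seminormalized weakly null sequence in $Y$. After a perturbation we may take it to be a block sequence $(y_n)$ of the tree basis, ordered by a fixed enumeration of $\Tree$ compatible with the tree order. By Amemiya--Ito, a subsequence is equivalent to the $\ell_2$ basis.

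For complementation I would pass to a further subsequence and work with its structure. Weak nullness forces the restriction of $y_n$ to each initial segment $\{\alpha:|\alpha|\le k\}$ to tend to $0$. A diagonal argument then lets us assume, up to a small perturbation, that the supports of the $y_n$ lie on pairwise disjoint levels. I would then choose norming functionals $f_n\in JT^*$, finite combinations of segment functionals $\sum_{\alpha\in S}e^*_\alpha$ supported on $\supp y_n$. The candidate projection is $Px=\sum_n f_n(x)y_n$. Its boundedness amounts to the estimate
\[
\sum_n f_n(x)^2\le C\|x\|^2 ,
\]
which should follow from the $\ell_2$ summing in the definition of the norm, since segments coming from different levels can be split into disjoint pieces.

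The main obstacle is that a single segment of $x$ can cross the supports of many $y_n$. To control this I would choose the $f_n$ so that their segments are "trimmed": each segment is cut to the band of levels of $y_n$. Then $\sum_n f_n(x)^2$ is dominated by a sum over pairwise disjoint segments, plus a tail controlled by the smallness of the perturbations.

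For $JT^*$ I would argue by duality. A subspace of $JT^*$ either meets $\B$ essentially, or its image in $JT^*/\B\cong\ell_2(2^\N)$ is not strictly singular. In the second case the quotient map gives $\ell_2$ directly, and a lifting makes it complemented. In the first case, $\B$ has an FDD of blocks whose weakly null sequences have $\ell_2$ subsequences by duality with Amemiya--Ito. A complemented $\ell_2$ in $\B$ then extends to $JT^*$ via the projection $JT^*\to\B$ given by $JT^{*}=\B\oplus\B^{\perp}$, using that $JT=\B^*$.

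For (2), let $Y$ be non-reflexive. Since $JT$ contains no $\ell_1$, $Y$ contains a weakly Cauchy sequence $(y_n)$ that is not weakly convergent. Its $w^*$-limit $y^{**}$ lies in $JT^{**}\setminus JT$, so its $\ell_2(2^\N)$-component $\sum_\sigma c_\sigma e^{**}_\sigma$ is nonzero. I would show that a subsequence, suitably perturbed, behaves like $\sum_\sigma c_\sigma\sum_{\alpha\in\sigma, n_k\le|\alpha|<n_{k+1}}e_\alpha$ plus a weakly null $\ell_2$ part. From this one gets that it is equivalent to the summing basis of $J$, via the $J$-estimates along branches combined with $\ell_2$-orthogonality across branches. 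This step is the hardest. Complementation would again come from projections built from segment functionals along the branches, with the same trimming technique as in (1).
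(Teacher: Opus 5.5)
Part (3) is quoted just as the paper does. Your argument for (1) in $JT$ is sound. The trimmed norming functionals $f_n$ are finitely supported, so they lie in $\B$, and all their segments are pairwise disjoint. Hence $\sum_n f_n(x)^2\le\|x\|^2$, and Amemiya--Ito supplies the upper $\ell_2$-estimate for $(y_n)$. This is the route of Lemma~\ref{compl.JT}, whereas Theorem~\ref{JT-subp}(1) obtains $\B$-valued biorthogonal functionals from Theorem~\ref{G-MA}.

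\textbf{(1) in $JT^*$.} The splitting $JT^*=\B\oplus\B^\perp$ does not exist: $\B^\perp$ lives in $JT^{**}$, and the decomposition is $JT^{**}=JT\oplus\B^\perp$. In fact $\B$ is not complemented in $JT^*$. Otherwise $JT^*\simeq\B\oplus\ell_2(\Gamma)$ would contain a non-separable reflexive subspace, contrary to Lemma~\ref{nonsepdual}. The correct extension is $P^{**}$ on $\B^{**}=JT^*$, where $P$ projects $\B$ onto $N_2\simeq\ell_2$; its range is $N_2$ by reflexivity.

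You also never show that subspaces of $\B$ contain complemented $\ell_2$'s, and your trimming trick does not dualize. Norming functionals of a level-block sequence in $\B$ form a level-block sequence in $JT$, which need not have an upper $\ell_2$-estimate. For instance, for $v_k=e^*_{\alpha_k}$ along one branch they are $e_{\alpha_k}$, which span $J$. The map $u\mapsto(\langle u,e_{\alpha_k}\rangle)_k$ into $\ell_2$ is then unbounded on initial-segment functionals. Theorem~\ref{B-subp} instead passes to a weakly Cauchy subsequence of these functionals and applies Amemiya--Ito to their differences. Finally, you leave unstated the perturbation that moves a subspace of $N$ into $\B$ when $V|_N$ is strictly singular.

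\textbf{(2).} This is the main gap: you describe the expected outcome rather than prove it.

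\emph{Upper estimate.} If $y^{**}=\sum_i a_i\sigma_i^{**}$ involves infinitely many branches, then at every level some node lies on infinitely many $\sigma_i$. So no level band decouples the branches, and Lemma~\ref{L8} gives $\ell_2$-orthogonality only for finitely many incomparable final segments at a time. Your model vector is also mis-scaled: a full band of a branch has norm equal to its length, while a single node or an average is what tends $w^*$ to $\sigma^{**}$. The missing idea, from Proposition~\ref{P11}, has four steps.
\begin{enumerate}
\item Group the branches into finite blocks $F_k$ with $3(1+\epsilon)\big(\sum_{i\in F_k}(|a_i|+b_i)^2\big)^{1/2}<\delta_k$ and $\sum_k\delta_k<\infty$.
\item Choose $m_k$ so that $A_k=\{\sigma_{i,>m_k}:i\in F_k\}$ is incomparable, and apply the single-branch Proposition~\ref{P10} to each $P_{\sigma_i}x_n$.
\item Split $x_{n_k}=v_k+w_k$ with $v_k=P_{\cup_{p\le k}A_p}x_{n_k}$. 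Summing the $\delta_k$-bounds shows that $(v_k)$ is dominated by the $J$-basis, while $w_k$ is weakly null, hence $\ell_2$ or norm null by Amemiya--Ito.
\item The lower $J$-estimate comes from $P_{\sigma_1}$ alone.
\end{enumerate}

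\emph{Non-block sequences.} Here the $JT$-component $y$ of $y^{**}$ is not a small perturbation. You need Proposition~\ref{P7}, which shows that $(y_k+y)$ is equivalent to $(y_k)$ past some $k_0$, together with its complementation part.

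\emph{Complementation.} Trimming cannot give it here. It controls the coefficients $(f_n(x))$ only in $\ell_2$, while for a $J$-sequence you need $\|\sum_n c_n e_n\|_J\le C\|x\|$. The paper uses functionals $J_n^*$ from a complete partition of the branch $\sigma_1$ into consecutive intervals. Unions of consecutive ones are again segments, so $\|\sum_n J_n^*(x)e_n\|_J\le\|x\|$ (Lemma~\ref{compl.J}). This is combined with the fact that $P_{\sigma_1}$ is an isomorphism on $[x_{n_k}]$.
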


The embeddability of $\ell_2$ into the subspaces of $JT$ was first established by James \cite{James:74}, and Amemiya and Ito later extended this by showing that every seminormalized weakly null sequence in $JT$ admits a subsequence equivalent to the basis of $\ell_2$. The following result serves as a further extension of the Amemiya-Ito theorem:

\begin{theorem}
Every seminormalized basic sequence in $JT$ has a subsequence equivalent to the standard basis of either $\ell_2$ or $J$, and the closed subspace it generates is complemented in $JT$.
\end{theorem}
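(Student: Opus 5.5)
Given a seminormalized basic sequence $(x_n)_n$ in $JT$, we first use the fact that $JT$ contains no copy of $\ell_1$. By Rosenthal's $\ell_1$ theorem we may pass to a weakly Cauchy subsequence. Since $(x_n)$ is basic and seminormalized, it cannot converge weakly to a nonzero vector. Two cases then remain: either $x_n \xrightarrow{w} 0$, or $(x_n)$ is weakly Cauchy and not weakly convergent. In the first case the Amemiya--Ito theorem gives a subsequence equivalent to the $\ell_2$ basis. In the second case $x_n \xrightarrow{w^*} x^{**}\in JT^{**}\setminus JT$. Using $JT^{**}=JT\oplus \ell_2(2^\N)$, we write $x^{**}=x+\sum_\s \lambda_\s e^{**}_\s$ with $(\lambda_\s)\in\ell_2(2^\N)$, countably many $\lambda_\s$ nonzero, and not all zero. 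Then $y_n = x_n - x$ is weakly Cauchy with $w^*$-limit $\sum_\s \lambda_\s e_\s^{**}$.

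For the non-weakly-convergent case, the plan is a gliding hump argument. By small perturbation we may assume the $y_n$ are finitely supported on $\Tree$, and after passing to a subsequence that they are block-supported level-wise, i.e.\ supported on successive intervals of levels $[p_n,q_n]$. The $w^*$-convergence to $\sum \lambda_\s e_\s^{**}$ means that for each branch $\s$ the initial-segment sums of $y_n$ along $\s$ tend to $\lambda_\s$. Truncating the $\ell_2$-tail, we may assume that, up to an arbitrarily small perturbation in norm, only finitely many branches $\s_1,\dots,\s_k$ carry mass, and that the ``branch sums'' $\sum_{\alpha\in\s_i,\,\alpha\in \supp y_n} y_n(\alpha)\approx\lambda_{\s_i}$. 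The remainder $z_n=y_n-\sum_i\lambda_{\s_i}u^{(i)}_n$, where $u^{(i)}_n$ is a suitable normalized vector on $\s_i$ with sum $1$ over the levels $[p_n,q_n]$, is then weakly null, so by Amemiya--Ito it is $\ell_2$-like. The goal is to show that $(y_n)$ dominates and is dominated by the summing/James basis. An upper James estimate comes from testing $\sum a_n y_n$ against a family of disjoint segments: a segment crossing several blocks picks up roughly $\lambda_\s$ times a consecutive sum of the $a_n$, and segments inside single blocks contribute an $\ell_2$ amount, which is bounded by the $J$ norm. A lower estimate follows by restricting to a branch $\s_1$ with $\lambda_{\s_1}\neq0$, since on a branch $JT$ is exactly $J$. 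From this we conclude that $(x_n)$, whose relation to $(y_n)$ is the shift by the fixed vector $x$, has a subsequence equivalent to the $J$ basis; the non-weakly-null nature of the summing basis of $J$ is consistent with this.

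For complementation, in the $\ell_2$ case we would like biorthogonal functionals in $JT^*$ of the form $f_n$ supported on the level blocks $[p_n,q_n]$ with $\norm{\sum b_nf_n}\lesssim\norm{(b_n)}_{\ell_2}$; then $P=\sum f_n\otimes x_n$ is bounded. Such $f_n$ can be taken as norming functionals of the blocks restricted to finitely many segments in the block. The estimate on sums of block-disjoint functionals in $JT^*$ is obtained by duality from the fact that $JT$ satisfies an upper $\ell_2$ estimate on level-block sequences modulo branch contributions. In the $J$ case we instead project onto the branch $\s_1$, using that $[e_\alpha]_{\alpha\in\s_1}\cong J$ is complemented in $JT$ via restriction to a branch. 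We then compose with a projection in $J$ onto the span of a block sequence equivalent to its basis; such projections exist in $J$ via averaging functionals. The main obstacle will be the upper estimate in the second case, controlling segments that cross many blocks and many branches simultaneously. There the $\ell_2$ summability of $(\lambda_\s)$ must be combined with the disjointness of segments to avoid losing more than a constant.
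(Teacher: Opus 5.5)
You have the paper's overall architecture: Rosenthal's dichotomy, Amemiya--Ito in the weakly null case, and in the non-trivial case a block reduction, a splitting into a branch part plus a weakly null remainder, and the lower $J$-estimate and complementation via $P_{\sigma_1}$ and the corresponding result in $J$. But the step carrying the real content does not work.

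You claim that by ``truncating the $\ell_2$-tail'' you may assume, up to an arbitrarily small norm perturbation, that only finitely many branches $\sigma_1,\dots,\sigma_k$ carry mass. Write $\lambda_i=\lambda_{\sigma_i}$ and $t^{**}=\sum_{i>k}\lambda_i\sigma_i^{**}$. Any correction $u_n$ with $y_n-u_n\xrightarrow{w^*}\sum_{i\le k}\lambda_i\sigma_i^{**}$ satisfies $u_n\xrightarrow{w^*}t^{**}$. Hence $\liminf_n\|u_n\|\ge\|t^{**}\|>0$ whenever the tail is non-zero. So the perturbation is uniformly small but not summable, and it does not preserve equivalence of basic sequences. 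The trivial bound $\|\sum_na_nu_n\|\le\eta\sum_n|a_n|$ is an $\ell_1$-bound, useless against the $J$-norm. A $J$-type bound for $(u_n)$ is the original problem again, since $(u_n)$ is itself non-trivially weakly Cauchy. Without the truncation, your remainder $z_n=y_n-\sum_{i\le k}\lambda_iu^{(i)}_n$ has $w^*$-limit $t^{**}\neq0$, so it is not weakly null and Amemiya--Ito does not apply. You rightly flag the upper estimate across many branches as the main obstacle, but the proposal supplies no mechanism for it.

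The missing idea, which is the heart of Proposition~\ref{P11}, is to keep all the branches but introduce them in finite groups:
\begin{itemize}
\item Split the indices into consecutive finite sets $F_k$ with $\sum_k\big(\sum_{i\in F_k}(|\lambda_i|+b_i)^2\big)^{1/2}<\infty$, where $b_i=\lim_n\|P_{\sigma_i}y_n\|$.
\item Choose levels $m_k$ beyond which the branches indexed by $F_1\cup\dots\cup F_k$ are pairwise separated.
\item Let group $F_k$ contribute to the branch part only in blocks lying beyond level $m_k$.
\end{itemize}
Inside one group the final segments are incomparable. So by Lemma~\ref{L8} the norm is the $\ell_2$-sum of the $J$-norms along the individual branches, and Proposition~\ref{P10} applied on each branch gives an upper $J$-estimate with constant proportional to $\big(\sum_{i\in F_k}(|\lambda_i|+b_i)^2\big)^{1/2}$. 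Across groups one uses the triangle inequality, and summability gives a uniform constant.

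Your single-node version $w_n=\sum_i\lambda_ie_{\alpha_{i,n}}$, with this scheduling, also works and is slightly simpler. A segment contains chosen nodes from at most one branch of each group, so Minkowski's inequality over groups gives $\|\sum_na_nw_n\|\le\sum_k\|(\lambda_i)_{i\in F_k}\|_2\,\|\sum_na_ne_n\|_J$. Only then is $y_n-w_n$ weakly null (for branches outside $\{\sigma_i\}$ one uses $\lambda_i\to0$), so that Theorem~\ref{A-I} applies.

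Two smaller steps also need actual arguments rather than ``consistency''.
\begin{itemize}
\item The passage from $(y_n)$ back to $(x_n)=(y_n+x)$ uses $|\sum_na_n|\le\|\sum_na_ne_n\|_J$ together with a finitely supported $x^*$ with $x^*(x)=1$ vanishing on the later blocks. Complementation needs a similar argument, as in Proposition~\ref{P7}.
\item In the $\ell_2$ case, the bound $\|\sum_nb_nf_n\|\lesssim\|(b_n)\|_2$ is dual to the exact \emph{lower} $\ell_2$-estimate for level-block sequences in $JT$ (Lemma~\ref{l-est}). It is not dual to an upper estimate ``modulo branch contributions''.
\end{itemize}
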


We also prove the following projectivity properties (see Definition \ref{definition1}):

\begin{theorem}
The spaces $\mathcal{B}$, $JT$, $JT^*$, and all of their higher-order dual spaces are both subprojective and superprojective.
\end{theorem}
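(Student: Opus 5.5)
We reduce the statement to the subspace results of the previous theorem, using the standard facts about subprojectivity and superprojectivity. Recall that $X$ is subprojective if every closed infinite-dimensional subspace of $X$ contains a further infinite-dimensional subspace complemented in $X$. It is superprojective if every infinite-codimensional closed subspace is contained in a proper complemented subspace of infinite codimension; equivalently, every quotient of $X$ by a subspace of infinite codimension has an infinite-dimensional quotient isomorphic to a complemented subspace of $X$. The key duality fact (González et al.) is the following: if $X$ is reflexive, or more generally if the relevant subspaces are weak$^*$-closed, then $X$ is superprojective whenever $X^*$ is subprojective. Here $JT$ is a dual space, $JT^*=\B^{**}$, and the decomposition $JT^{**}=JT\oplus\ell_2(2^\N)$ holds.

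\textbf{Step 1: subprojectivity of $JT$ and $JT^*$.} This is part (1) of the previous theorem: every infinite-dimensional closed subspace contains a complemented $\ell_2$. For $\B$, we view it as a subspace of $JT^*$. Every subspace of $\B$ is then a subspace of $JT^*$ and contains an $\ell_2$ complemented in $JT^*$, hence complemented in $\B$ by restricting the projection. This restriction works because the projected copy of $\ell_2$ lies in $\B$.

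\textbf{Step 2: higher duals.} We use $JT^{**}=JT\oplus\ell_2(2^\N)$. By iteration, $JT^{(2k)}\cong JT\oplus\ell_2(\Gamma)$ and $JT^{(2k+1)}\cong JT^*\oplus\ell_2(\Gamma)$, since the dual of $\ell_2(\Gamma)$ is again Hilbert. Subprojectivity is stable under finite direct sums (a known result), and Hilbert spaces are trivially subprojective. Hence all higher duals of $\B$ and of $JT$ are subprojective.

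\textbf{Step 3: superprojectivity.} Let $Z$ be one of $\B$, $JT$, $JT^*$, or any higher dual. Its dual is subprojective by Steps 1–2. We apply the criterion that $X$ is superprojective when $X^*$ is subprojective. Concretely, given a closed subspace $N\subset X$ of infinite codimension, we take $N^\perp\subset X^*$, which is infinite-dimensional. Inside $N^\perp$ we find a complemented copy of $\ell_2$ with projection $P$; by refining we may take it weak$^*$-closed, with $P$ weak$^*$-continuous. Then $\ker P$ pre-annihilates to a complemented subspace containing $N$ of infinite codimension.

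\textbf{Main obstacle.} The hardest step is the weak$^*$ requirement in Step 3. The complemented $\ell_2$ copies from part (1) must be weak$^*$-closed in the dual with weak$^*$-continuous projections; this is what makes the duality transfer work for nonreflexive spaces. I would handle it by checking that the $\ell_2$ copies constructed in part (1) can be taken spanned by block sequences of the boundedly complete basis, or by the biorthogonals, so that they are weak$^*$-closed. Alternatively, I would invoke the general result that subprojectivity of $X^*$ implies superprojectivity of $X$, and the converse for duals, which avoids this check.
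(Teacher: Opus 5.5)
\textbf{There is a genuine gap in Step 3.} The criterion you fall back on, ``$X^*$ subprojective implies $X$ superprojective,'' is false for non-reflexive $X$. The Argyros--Haydon space is hereditarily indecomposable, so it is not superprojective: a complemented subspace of infinite codimension is finite-dimensional, so it cannot contain an infinite-dimensional subspace. Yet its dual is isomorphic to $\ell_1$, which is subprojective.

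So everything rests on the weak$^*$ check, which you only assert, and you aim it at the wrong property. A copy of $\ell_2$ in a dual space is automatically weak$^*$-closed: its ball is weakly, hence weak$^*$, compact, and Krein--\v{S}mulian applies. What you need is a weak$^*$-continuous projection $P=S^*$. Then the subspace containing $N$ is $\ker S=(\ran P)_\perp$, not the preannihilator of $\ker P$.

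The projections supplied by the subprojectivity results are not of this kind. When $V|_{M^\perp}$ is not strictly singular, case (ii) of Theorem \ref{JT-subp}(2) gives a projection with kernel $V^{-1}(V(N_1)^\perp)\supseteq\B$. Since $\B$ is weak$^*$-dense in $JT^*=\B^{**}$, that projection is never weak$^*$-continuous. You need a genuinely different projection, not a refinement of this one. Also, $JT^*$ has no basis, so ``block sequences of the boundedly complete basis'' gives you nothing there.

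\textbf{The missing idea} is an explicit construction of biorthogonal systems $(y_n,z_n)$, with $y_n\in X$, $z_n\in M^\perp\subseteq X^*$, and both sequences equivalent to the unit vector basis of $\ell_2$. Then $x\mapsto\sum_n\langle x,z_n\rangle y_n$ is the required projection on $X$ with kernel containing $M$. The paper builds these space by space.

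For $\B$ and $JT$ (Propositions \ref{Bsuper} and \ref{JTsuper}), it takes bounded biorthogonal vectors on the predual side from Theorem \ref{G-MA}. It then passes to weakly Cauchy subsequences and differences, and uses Theorem \ref{A-I} to make them $\ell_2$-sequences. For $JT$ three cases are needed:
\begin{itemize}
\item $M^\perp\cap\B$ infinite-dimensional: use $P^*$ for a projection $P$ on $\B$ from Theorem \ref{B-subp}.
\item $M^\perp+\B$ non-closed: perturb by an automorphism $I-K$ of $JT$, so that $I-K^*$ is weak$^*$-continuous.
\item $V|_{M^\perp}$ an isomorphism: build a fresh biorthogonal system from an orthonormal basis of $M^\perp$.
\end{itemize}

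For $JT^*$ your route would need weak$^*$-continuous projections in $JT^{**}$, and the abstract subprojectivity of $JT\oplus\ell_2(\Gamma)$ says nothing about these. The paper instead uses Proposition \ref{GP-24} together with the superprojectivity of $\B$ to reduce to $M\supseteq\B$, so that $M^\perp\subseteq\ell_2(\Gamma^{**})$. It then applies the explicit projection of Proposition \ref{P4}, built from disjoint final segments.

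For the higher duals, use the stability of superprojectivity under finite direct sums, applied to $JT\oplus\ell_2(\Gamma)$ and $JT^*\oplus\ell_2(\Gamma)$, rather than duality.

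Finally, deducing the subprojectivity of $\B$ from that of $JT^*$ by restricting the projection is valid. However, it inverts the paper's logic, because Theorem \ref{JT-subp}(2) is itself proved using Theorem \ref{B-subp}.
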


The paper is organized as follows. After the preliminaries in Section 2, Section 3 presents a revisited approach to the basic properties of $JT$. Our proofs are based on the Amemiya-Ito theorem and a result implicitly contained in \cite{LS}:

\begin{proposition}
Every seminormalized block sequence $(x_n)_n$ in $JT$ has a subsequence $(x_{n_k})_k$ such that $\lim_k \s^*(x_{n_k}) = 0$, with the exception of countably many branches of $\Tree$.
\end{proposition}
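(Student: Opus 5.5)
The plan is to get pointwise convergence of the branch functionals along a subsequence from Rosenthal's $\ell_1$ theorem, and then show that the pointwise limit is square-summable over branches. Here $\s^*$ denotes the branch functional $x\mapsto\sum_{\alpha\in\s}x(\alpha)$ on $JT$. Taking the family $\mathcal{P}=\{\s\cap\supp x\}$ (or any single segment of $\s$) in the definition of the norm shows that $\|\s^*\|\le 1$.

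\emph{Step 1: pointwise convergence.} Let $C=\sup_n\|x_n\|_{JT}<\infty$. By James's theorem, recalled in the Introduction, $JT$ contains no copy of $\ell_1$. Rosenthal's $\ell_1$ theorem then gives a weakly Cauchy subsequence $(x_{n_k})_k$. Since every $\s^*$ belongs to $JT^*$, the limit
\[ g(\s)=\lim_k \s^*(x_{n_k}) \]
exists for every branch $\s\in 2^{\N}$. It remains to show that $g(\s)=0$ for all but countably many branches.

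\emph{Step 2: finitely many branches at a time.} Fix distinct branches $\s_1,\dots,\s_m$. There is a level $L$ beyond which the sets $\s_i\cap\{\alpha:|\alpha|\ge L\}$ are pairwise disjoint segments, since any two distinct branches agree only up to their splitting node. Because $(x_n)_n$ is a block sequence, there is $k_0$ such that for $k\ge k_0$ the support of $x_{n_k}$ lies in levels $\ge L$. For such $k$ the sets $S_i=\s_i\cap\supp$-levels$\,[L,\infty)$ form a family of pairwise disjoint segments, and $\s_i^*(x_{n_k})=\sum_{\alpha\in S_i}x_{n_k}(\alpha)$. The definition of the norm therefore gives
\[ \sum_{i=1}^m \s_i^*(x_{n_k})^2\le \|x_{n_k}\|^2\le C^2 .\]
Letting $k\to\infty$ yields $\sum_{i=1}^m g(\s_i)^2\le C^2$.

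\emph{Step 3: conclusion.} Since the bound in Step 2 holds for every finite set of branches, $\sum_{\s\in 2^{\N}} g(\s)^2\le C^2$. Hence for each $j\in\N$ the set $\{\s : |g(\s)|\ge 1/j\}$ has at most $C^2j^2$ elements, and the support of $g$ is a countable union of finite sets, so it is countable. For every branch outside this countable set, $\lim_k\s^*(x_{n_k})=g(\s)=0$, which is the claim.

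The only step needing care is Step 2: the block property is exactly what lets us pass beyond the splitting levels of the finitely many chosen branches, so that their tails become disjoint segments. The main external input is Rosenthal's theorem together with $\ell_1\not\hookrightarrow JT$. If one wished to avoid these, the alternative would be a diagonal argument using the same uniform $\ell_2$-estimate, but this is unnecessary.
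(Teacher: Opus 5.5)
Your proof is correct, but it takes a different route from the paper. Your Step 2 is exactly the paper's basic estimate (Lemma \ref{L1} and the computation in Proposition \ref{P1}(b)(ii)): beyond the splitting levels of finitely many distinct branches, their tails are disjoint segments, so $\sum_i \sigma_i^*(x_n)^2\le\|x_n\|^2$ eventually.

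The difference is in how pointwise convergence is obtained. You get it in one stroke from Rosenthal's $\ell_1$ theorem together with James's theorem that $JT$ contains no copy of $\ell_1$. The paper avoids both. It applies Lemma \ref{L1} at each scale $1/j$ to build nested infinite sets $L_j$ and finite sets $G_j$ of branches, and then diagonalizes, so that only the countably many branches in $\bigcup_j G_j$ can carry a non-zero limit.

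Your route is shorter and gives more: a weakly Cauchy subsequence whose limits $(\lim_k\sigma^*(x_{n_k}))_\sigma$ lie in $\ell_2(\Gamma)$. But it costs exactly what the paper wants this proposition for. The paper combines it with Theorem \ref{A-I} to give a new proof that $JT$ contains no $\ell_1$ (Theorem \ref{TH_JT}), and Proposition \ref{P3} and Theorem \ref{P2} rest on that result. With your proof, that chain would be circular. So your closing remark that the elementary diagonal argument is ``unnecessary'' holds only if James's theorem is taken as given.

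If you do carry out the diagonal argument, a $\liminf$ bound at each stage does not survive passage to the diagonal subsequence; you must control $\limsup$. One way is to choose $L_j\subseteq L_{j-1}$ so that the number of branches with $\liminf_{n\in L_j}|\sigma^*(x_n)|\ge 1/j$ (at most $C^2j^2$) is maximal. Maximality then forces $\limsup_{n\in L_j}|\sigma^*(x_n)|\le 1/j$ for every other branch.
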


This result, coupled with the Amemiya-Ito theorem, easily yields that $\ell_1$ is not contained in $JT$, and the remaining basic properties of $JT$ emerge as straightforward consequences. Section 3 also includes the following proposition, which is critical for proving that $JT^*$ is superprojective:

\begin{proposition}
Let $JT^{**} = JT \oplus \ell_2(2^\N)$ be the natural decomposition. Then, for every separable subspace $Y$ of $\ell_2(2^\N)$, $Y_\perp$ is a complemented subspace of $JT^*$.
\end{proposition}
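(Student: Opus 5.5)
Since $Y$ is separable, there are countably many branches $\s_1,\s_2,\dots$ such that $Y\subseteq Z:=\overline{\mathrm{span}}\{e^{**}_{\s_i}:i\in\N\}\subseteq \ell_2(2^\N)$. Indeed, every vector of $\ell_2(2^\N)$ has countable support, and a countable dense subset of $Y$ has countable total support. It therefore suffices to handle two things: the special case $Y=Z$, and then the passage from $Z$ to a general $Y\subseteq Z$.

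For the special case, write $e^*_{\s_i}$ for the branch functionals $\s_i^*\in JT^*$. The first step is to reduce to a set of branches that is well separated in the tree. We may assume the $\s_i$ are distinct, so we can choose nodes $\beta_i\in\s_i$ such that the tails $T_i=\{\alpha\in\s_i:\alpha\ge\beta_i\}$ are pairwise disjoint. Their union $W=\bigcup_i T_i$ is a countable union of disjoint segments. Consider the restriction operator $R:x\mapsto x|_W$ on $JT$. Since a family of disjoint segments intersected with $W$ is again a family of disjoint segments, $R$ is a norm one projection of $JT$ onto $JT(W)$. Moreover, $JT(W)$ is isometric to $(\sum_i J)_{\ell_2}$, with the $i$-th copy being $J$ on the branch tail $T_i$. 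Its bidual is $(\sum_i J^{**})_{\ell_2}$, and the $\ell_2$-part of this bidual is exactly $Z$. This is a quasi-reflexive structure: $J^{**}=J\oplus\R e^{**}$, with $J^*=\overline{\mathrm{span}}\{e^*_n\}\oplus\R\,\mathbf{1}$, where $\mathbf{1}$ is the summing functional, that is, the branch functional.

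Now define $Q:JT^*\to JT^*$ by
\[ Q f=f-\sum_i \la e^{**}_{\s_i},f\ra\,\s_i^*|_{T_i}. \]
The series converges in norm. The functionals $\s_i^*|_{T_i}$ have disjoint tail supports and so behave like an $\ell_2$ basis in $JT^*$; for the upper estimate we need the uniform boundedness of $\sum_i a_i \s_i^*|_{T_i}$ on $JT$, which holds by Cauchy–Schwarz over the disjoint segments $T_i$. The scalars $(\la e^{**}_{\s_i},f\ra)_i$ lie in $\ell_2$ because $(e^{**}_{\s_i})_i$ is isometrically the $\ell_2$ basis. One then checks that $\la e^{**}_{\s_j},\s_i^*|_{T_i}\ra=\delta_{ij}$, since $e^{**}_{\s_j}$ is the $w^*$-limit along $\s_j$, so that tails are what matter. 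This gives $Q^2=Q$ and $\ker(I-Q)\supseteq$, indeed $=$, $Z_\perp$. Hence $Z_\perp=\ran Q$ is complemented.

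For a general $Y\subseteq Z$, let $P_Y$ be the orthogonal projection of $Z\cong\ell_2$ onto $Y$ and modify $Q$ to
\[ Q_Y f=f-\sum_i \big(P_Y (\la e^{**}_{\s_j},f\ra)_j\big)_i\,\s_i^*|_{T_i}. \]
The same duality computation gives $Q_Y^2=Q_Y$ and $\ran Q_Y=Y_\perp$. Boundedness follows from that of $f\mapsto(\la e^{**}_{\s_j},f\ra)_j\in\ell_2$ together with the $\ell_2$ upper estimate for $\sum_i a_i\s_i^*|_{T_i}$.

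I expect the main obstacle to be the precise identification $\la e^{**}_{\s_j},\s_i^*|_{T_i}\ra=\delta_{ij}$, together with verifying that the range is exactly $Y_\perp$ and not larger. For the pairing, $e^{**}_{\s_j}(g)=\lim_{\alpha\in\s_j}g(e_\alpha)$, and $\s_i^*|_{T_i}(e_\alpha)$ is eventually $1$ along $\s_i$ and eventually $0$ along other branches, because distinct branches separate. For the range, if $f\in Y_\perp$ then the coordinate vector $(\la e^{**}_{\s_j},f\ra)_j$ is orthogonal to $Y$ (identifying $Z$ with its $\ell_2$ coordinates), so $P_Y$ kills it and $Q_Yf=f$. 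Conversely, the subtracted correction exactly cancels the $Y$-coordinates, so $\ran Q_Y\subseteq Y_\perp$.
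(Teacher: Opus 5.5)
Your argument is correct and is essentially the paper's. Writing an orthonormal basis $z_k^{**}=\sum_i\lambda_i^k\s_i^{**}$ of $Y$, your $Q_Y$ is exactly $I-P$ for the paper's projection $P(x^*)=\sum_k z_k^{**}(x^*)\,z_k^*$, where $z_k^*=\sum_i\lambda_i^k\s^*_{i,>n_i}$. The ingredients are also the same: the upper $\ell_2$-estimate for disjoint tail functionals (Cauchy--Schwarz, as in Lemma~\ref{L3} and Lemma~\ref{L2}) and the biorthogonality $\s_j^{**}(\s^*_{i,>n_i})=\delta_{ij}$.

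One correction concerns your aside that $R:x\mapsto x|_W$ is a norm-one projection and that $JT(W)$ is isometric to $(\sum_i J)_{\ell_2}$. This is false in general, because disjoint tails need not be incomparable. A single segment can pass through several $T_i$ separated by gaps, so $s\cap W$ need not be a segment, and for some admissible choices of the $\beta_i$ the map $R$ is even unbounded. Nothing in your construction uses this claim, so simply delete that paragraph.
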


In the first part of Section 4, we prove that the spaces $JT$ and $JT^*$ are $\ell_2$-complement\-ably saturated, which yields that the spaces $\mathcal{B}$, $JT$, and $JT^*$ are subprojective. In the second part, we prove the superprojectivity of these spaces.

In the final section, we first prove that non-trivial weakly Cauchy sequences in the James space $J$ possess a subsequence equivalent to the basis of $J$, and the generated subspace is complemented in $J$. The second part of the section is devoted to proving the analogous result for sequences in $JT$.

\section{Preliminaries}

We use standard Banach space notation. For a Banach space $X$, $B_X$ and $S_X$ denote the closed unit ball and the unit sphere of $X$. The closed subspace generated by a sequence $(x_n)\subset X$ is denoted by $[x_n]$. A sequence $(x_n)$ in $X$ is seminormalized if $0<\inf_n \|x_n\|$ and $\sup_n\|x_n\|<\infty$.

Given non-empty subsets $A\subset X$ and $G\subset X^*$, we denote
$A^\perp= \{f\in X^* : f(a)=0\; \textrm{ for all } a\in A\}$ and $G_\perp= \{x\in X : g(x)=0\; \textrm{ for all } g\in G\}$.

If $X$ and $Y$ are Banach spaces, we write $X\simeq Y$ if they are isomorphic, and $\Lc(X,Y)$ denotes the set of all (bounded) operators from $X$ into $Y$.
Given a closed subspace $M$ of a Banach space $X$, $J_M:M\to X$ and $Q_M:X\to X/M$ denote the inclusion and the quotient map.

We say that a sequence $(x_n)$ in $X$ \emph{admits an upper $\ell_2$-estimate} if there exists a constant $C>0$ such that $\|x_1+\cdots+ x_k\| \leq C(\|x_1\|^2+\cdots+\|x_k\|^2)^{1/2}$ for every $k\in\N$.
The sequence $(x_n)$ \emph{admits a lower $\ell_2$-estimate} if there exists a constant $D>0$ such that $\|x_1+\cdots+ x_k\| \geq D (\|x_1\|^2+\cdots+ \|x_k\|^2)^{1/2}$ for every $k\in\N$.

Given a sequence $(X_n)$ of Banach spaces, we denote
$$
\ell_2(X_n) =\{(x_n) : x_n\in X_n \textrm{ and } (\|x_n\|)\in\ell_2\}.
$$
Endowed with the norm $\|(x_n)\|_2 = (\sum_{n=1}^\infty \|x_n\|^2)^{1/2}$, $\ell_2(X_n)$ is a Banach space.

\subsection{The dyadic tree}
Recall that the dyadic tree $T$ is a countable tree with a unique root, and every $\alpha \in T$ has two immediate successors. Traditionally, $T$ is represented as $T=\{(n,i) : 0\leq n<\infty, 0\leq i <2^n \}$ with the usual partial order in which each $(n,i)\in T$ has two successors $(n+1,2i)$ and $(n+1,2i+1)$.
The \emph{height of} $\alpha=(n,i)\in T$ is $|\alpha|=n$.

Working in Banach spaces related to the dyadic tree, it is convenient to identify $T$ with $(\N, \preceq )$ where $\preceq$ is compatible with the natural order of $\N$ in the sense that $n \preceq m$ implies $n < m$. For example, the function $V : T \to \N$ defined by the rule $V((n,i)) = 2^n + i$ identifies $T$ with $(\N, \preceq)$.

A segment of $T$ is any set $s$ such that for every $n, m \in s$ we have $n \preceq m$, and for every $k\in \N$ with $n \preceq k \preceq m$ we have $k\in s$. We shall denote the segments by the letters $s, t$.
An initial segment is a segment having the root as its first element.
If $s$ is a segment and $I$ is an interval of $\N$, the restriction of $s$ to $I$ is the segment $s_{|I}=s\cap I$. This follows from the compatibility of $\preceq$ and $\leq$.

A branch is a maximal segment. We shall denote branches by the letters $\sigma,\tau$.
If $\sigma$ is a branch and $n\in\N$, we denote by $\sigma_{> n}$ the set $\sigma_{> n}=\{k\in \sigma: k>n\}$. For a branch $\sigma$ and $n\in\N$, we shall call the set $\sigma_{> n}$ a final segment.

Also, by $\sigma_{< n}$ we shall denote the initial segment $\sigma_{<n}=\{k\in \sigma: k<n\}$. In a similar manner, we define the initial segment $\sigma_{\leq n}$ and the final segment $\sigma_{\geq n}$.

Note that every infinite segment is contained in a unique branch. Actually, it is a final segment of a branch $\sigma$.
Two final segments $\sigma_{1,>n_{1}}$ and $\sigma_{2,>n_{2}}$ are said to be incomparable if their first elements are incomparable.
We denote by $\Gamma$ the set of all branches in $T$. Note that the cardinality of $\Gamma$ is $\mathfrak{c}$.

Let us note that:
\begin{enumerate}
    \item[a)] if $\sigma_{1},\dots,\sigma_{l}$ are distinct branches, then there exists $k\in\N$ such that the final segments $\sigma_{1,> k},\dots, \sigma_{l,>k}$ are incomparable.
    \item[b)] if $(\s_{n})_{n\in\N}$ is a sequence of initial segments, there exist a subsequence $(\s_{n})_{n\in L}$ and an initial segment or branch $\sigma$ such that $\s_{n}\xrightarrow[n\in L]{} \s$ in the pointwise topology.
\end{enumerate}

\section{James Tree and its basic properties}\label{sect:prelim}

In this section, we define the norm of the space $JT$ and we present some of its basic properties. Namely, we prove that $\ell_1$ is not contained in $JT$ and that the quotient $JT^*/ \B$ is isometric to $\ell_2(\Gamma^*)$ and their consequences. Our approach is based on the Amemiya-Ito theorem \cite{AI}. In particular, we explain how, using the Amemiya-Ito Theorem (Theorem \ref{A-I}), we derive an alternative approach to the proof of the basic properties of $JT$ and $JT^*$ (Theorems \ref{TH_JT}, \ref{P2}). For additional information on these spaces, we refer to \cite[Chapter 3]{FetterG} or \cite[Chapter VIII]{Dulst}.

\subsection{The norm of $JT$}
Let $\Tree$ denote the dyadic tree. We start with the definition of the norm of $JT$.

For $x=(x_i)_i \in c_0$, we denote
\begin{equation}\label{JT-norm}
\|x\|_{JT}= \sup \Big(\sum_{j=1}^k \big(\sum_{i\in s_j} x_{i}\big)^2 \Big)^{1/2},
\end{equation}
where the \emph{sup} is taken over all $k\in\N$ and $s_1,\ldots, s_k$ pairwise disjoint segments in $T$.

The space $JT$ is defined as
$$
JT=\{x=(x_i)_i\in c_0: \|x\|_{JT}< \infty \},
$$
and endowed with the norm $\|\cdot\|_{JT}$ is a Banach space.

Let $F$ be a finite or infinite initial segment of $\N$. A family $P=\{s_i\} _{i\in F}$  of pairwise disjoint segments of  $\mathcal{T}$ is called a partition of $\mathcal{T}$ and for $x\in JT$   and $P$ a partition  we set
\begin{equation}\label{JT-P}
\|x\|_{P}= \Big(\sum_{i\in F} \big(\sum_{j\in s_i} x_{j}\big)^2 \Big)^{1/2} \le \|x\|_{JT}.
\end{equation}

We set $(e_n)_n$ the natural basis of $c_0$ which is a normalized bimonotone boundedly complete basis of $JT$ (see, e.g., \cite[page 419]{AI} or \cite[page 10]{andrew}).
We denote by $\{e^*_n : n \in \N\}\subset JT^*$ the sequence of biorthogonal functionals.
Every finite segment $s$ of $T$ defines a norm-one functional $s^* \in JT^*$ by the rule $s^*(x) = \sum_{n\in s}x(n)$. This is extended to the infinite segments and hence to the branches. We set $\Gamma^* = \{ \sigma^* : \sigma \in \Gamma \}$. It is easy to see that for $\sigma_1,\sigma_2 \in \Gamma$ not equal, $\|\sigma_1^* - \sigma_2^*\| > 1$; hence we have:

\begin{proposition}
The space $JT^*$ is non-separable.
\end{proposition}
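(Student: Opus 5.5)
The plan is to exhibit an uncountable family in $JT^*$ whose members are pairwise at distance greater than some fixed positive constant. Such a family shows that no countable set can be dense, since each point of a countable set could lie within distance $1/2$ of at most one member of the family. The natural family is $\Gamma^*=\{\sigma^*:\sigma\in\Gamma\}$. Since $\Gamma$ has cardinality $\mathfrak{c}$, it suffices to verify the separation claim stated just before the proposition: $\|\sigma_1^*-\sigma_2^*\|>1$ for distinct branches.

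\textbf{Well-definedness.} First I check that each $\sigma^*$ is a norm-one functional on $JT$. For a finite segment $s$ we have $|s^*(x)|\le \|x\|_{JT}$, since the single segment $s$ is an admissible family in \eqref{JT-norm}. For a branch $\sigma$, consider the partial sums $\sum_{n\in\sigma, n\le N}x(n)$. I claim they converge for $x\in JT$: two such sums differ by the sum of $x$ over a finite segment of $\sigma$, and $\sum_{n\in s}x(n)\to0$ as $\min s\to\infty$. This holds because $\|x\|_{JT}$ dominates the $\ell_2$-sum of the sums over disjoint segments of $\sigma$, which gives a Cauchy criterion. Alternatively, one can simply define $\sigma^*$ as the $w^*$-limit of the uniformly bounded functionals $\sigma_{\le N}^*$ evaluated on the basis, and extend by density. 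In either case $\|\sigma^*\|\le1$.

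\textbf{Separation.} Let $\sigma_1\ne\sigma_2$. They share an initial segment and then split at some node. Choose nodes $n_1\in\sigma_1\setminus\sigma_2$ and $n_2\in\sigma_2\setminus\sigma_1$; these are incomparable. Take the test vector $x=e_{n_1}-e_{n_2}$. Then $\sigma_1^*(x)=1$ and $\sigma_2^*(x)=-1$, so $(\sigma_1^*-\sigma_2^*)(x)=2$. Next I compute $\|x\|_{JT}$. Any segment contains at most one of two incomparable nodes, so the supremum in \eqref{JT-norm} is attained by taking two disjoint segments $\{n_1\}$ and $\{n_2\}$, giving $\|x\|_{JT}=\sqrt{1^2+1^2}=\sqrt2$. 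Hence
\[
\|\sigma_1^*-\sigma_2^*\|\ge 2/\sqrt2=\sqrt2>1 .
\]

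\textbf{Conclusion.} The balls of radius $1/2$ centered at the points $\sigma^*$, $\sigma\in\Gamma$, are pairwise disjoint. There are $\mathfrak{c}$ of them, so $JT^*$ is non-separable.

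The only mildly delicate point is that $\sigma^*$ is a bounded functional on all of $JT$ and not merely on $c_{00}$. Even this is not really needed: it suffices to define $\sigma^*$ on the dense subspace $c_{00}(\Tree)$, check $\|\sigma^*\|\le1$ there, and extend by continuity. The separation estimate itself uses only finitely supported vectors.
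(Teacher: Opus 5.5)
Your proof is correct and follows the paper's argument: the paper notes that $\|\sigma_1^*-\sigma_2^*\|>1$ for distinct branches, so the continuum-sized family $\Gamma^*$ is uniformly separated and $JT^*$ cannot be separable. Your test vector $e_{n_1}-e_{n_2}$, with incomparable nodes $n_1\in\sigma_1\setminus\sigma_2$ and $n_2\in\sigma_2\setminus\sigma_1$, supplies the verification the paper leaves to the reader and gives the explicit bound $\sqrt{2}$.
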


The following result is well-known.

\begin{proposition}\label{P_A}
Let $A\subset \Tree$ such that for each segment $s$ in $\Tree$ the intersection $s\cap A$ is again a segment. Then $P_A(\sum_{k=1}^\infty a_k e_{k}) = \sum_{k=1; k\in A}^\infty a_k e_{k}$ defines a norm-one projection $P_A:JT\to JT$.
\end{proposition}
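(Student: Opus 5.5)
The plan is to show that every segment functional is unaffected by $P_A$ in the right way, and then read off the norm inequality directly from the definition \eqref{JT-norm}. First we handle finitely supported $x=\sum_k a_k e_k\in c_{00}$. For such $x$ we show $\|P_Ax\|_{JT}\le\|x\|_{JT}$, where $P_Ax=\sum_{k\in A}a_ke_k$. Take any finite family $s_1,\dots,s_m$ of pairwise disjoint segments of $\Tree$ and discard those with $s_j\cap A=\emptyset$. By hypothesis each set $t_j=s_j\cap A$ is again a segment, and the $t_j$ are pairwise disjoint because $t_j\subset s_j$. Moreover
\[
\sum_{i\in s_j}(P_Ax)(i)=\sum_{i\in s_j\cap A}a_i=\sum_{i\in t_j}x(i).
\]
Consequently
\[
\Big(\sum_j\big(\sum_{i\in s_j}(P_Ax)(i)\big)^2\Big)^{1/2}=\Big(\sum_j\big(\sum_{i\in t_j}x(i)\big)^2\Big)^{1/2}\le\|x\|_{JT}.
\]
Taking the supremum over all such families gives $\|P_Ax\|_{JT}\le\|x\|_{JT}$.

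Next we extend $P_A$ to all of $JT$. Since $(e_n)$ is a basis of $JT$, $c_{00}$ is dense, so $P_A$ extends uniquely to a norm-at-most-one operator on $JT$. This extension still acts coordinatewise as $\sum a_ke_k\mapsto\sum_{k\in A}a_ke_k$, because the coordinate functionals $e_k^*$ are continuous. Alternatively, one can run the computation above directly for $x\in JT$, since the segment sums of $P_Ax$ are segment sums of $x$ and every segment functional $s^*$ is continuous.

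Finally, $P_A^2=P_A$ is clear coordinatewise. If $A\neq\emptyset$, then $P_Ae_k=e_k$ for $k\in A$ with $\|e_k\|=1$, so $\|P_A\|=1$ (the case $A=\emptyset$ gives $P_A=0$ and is trivial).

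The only point needing care is the case of infinite segments in the supremum, together with convergence of $\sum_{k\in A}a_ke_k$. Both are avoided by working on $c_{00}$ first and extending by density, so no real obstacle is expected; the key observation is simply that intersections with $A$ preserve both segmenthood and disjointness.
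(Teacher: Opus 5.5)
Your proof is correct and is essentially the paper's argument: the paper's one-line proof is exactly your observation that the segment sums of $P_Ax$ over disjoint segments $s_j$ equal the segment sums of $x$ over the disjoint segments $s_j\cap A$. You also spell out the routine points the paper leaves implicit: working on $c_{00}$ and extending by density, checking idempotence, and noting that $\|P_A\|=1$ when $A\neq\emptyset$.
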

\begin{proof}
Observe that $\sum_{j=1}^k \big(\sum_{i\in s_j} (P_Ax)_{}\big)^2 = \sum_{j=1}^k \big(\sum_{t_i\in s_j\cap A} x_{i}\big)^2$.
\end{proof}

We will need the following fundamental result of Amemiya and Ito \emph{(\cite[Theorem]{AI})}.

\begin{theorem}\label{A-I}
If $(x_n)_n$ is a seminormalized block sequence in $JT$ such that $\sigma^*(x_n) \to 0$ for every $\sigma \in \Gamma$, then there exists a subsequence $(x_{n_k})_k$ of $(x_n)_n$ equivalent to the unit vector basis of $\ell_2$. In particular, $(x_n)_n$ is a weakly null sequence.
\end{theorem}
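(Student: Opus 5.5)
The plan is to prove directly that a suitable subsequence satisfies both an upper and a lower $\ell_2$-estimate. The ``in particular'' clause then follows at once: a seminormalized block sequence in which every subsequence has a further subsequence equivalent to the $\ell_2$ basis is weakly null.

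\textbf{Lower estimate.} Let $y=\sum_k a_kx_k$ and write $I_k=\ran x_k$. For each $k$, pick finitely many pairwise disjoint segments $s^k_1,\dots,s^k_{m_k}$ with $\|x_k\|_{P}\ge \|x_k\|/2$. Restricting each $s^k_j$ to the interval $I_k$ gives again a segment, by the compatibility of $\preceq$ with $\le$, and it does not change $(s^k_j)^*(x_k)$. The restricted segments, over all $k$ and $j$, are pairwise disjoint, since the $I_k$ are disjoint. This family witnesses
\[
\|y\|\ge \tfrac12\Big(\sum_k a_k^2\|x_k\|^2\Big)^{1/2}.
\]
This step holds for every seminormalized block sequence and needs no passage to a subsequence.

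\textbf{Upper estimate: splitting a segment.} Fix a family of pairwise disjoint segments $s_1,\dots,s_m$. Each $s_j$ meets the intervals $I_k$ in consecutive pieces, and these pieces are of three kinds:
\begin{itemize}
\item a first piece (the \emph{head}), in the first $I_k$ that $s_j$ meets;
\item a last piece (the \emph{tail}), in the last $I_k$ that $s_j$ meets;
\item the pieces in between (the \emph{middle}), each of which crosses $I_k$ from its bottom level to its top level.
\end{itemize}
Then $(s_j^*y)^2\le 3(\text{head}^2+\text{tail}^2+\text{middle}^2)$.

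The heads lying in a fixed $I_k$ come from disjoint segments, so they form a partition. Hence their squared sum is at most $a_k^2\|x_k\|^2$, and summing over $k$ gives $\sum_k a_k^2\|x_k\|^2$. The tails are handled the same way.

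A middle piece of $s_j$ in $I_k$ equals $\tau^*(x_k)$ for any branch $\tau$ extending $s_j$, because $x_k$ is supported in $I_k$. So the middle term of $s_j$ is
\[
\sum_{k\in M_j} a_k\,\tau_j^*(x_k),
\]
where $M_j$ is the set of $k$ whose band $s_j$ crosses fully. For a fixed $k$, only segments that start strictly below the lowest level $\ell_k$ of $I_k$ can cross $I_k$. Since they are disjoint, they pass through distinct nodes at level $\ell_k$, so there are at most $2^{\ell_k}$ of them.

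\textbf{Choice of subsequence.} The aim is to pass to a subsequence with
\[
\sum_{\text{nodes }\beta\text{ at level }\ell_{n_{k-1}}+1}\ \sup_{\tau\ni\beta}|\tau^*(x_{n_k})|\le \e_k,
\qquad \sum_k\e_k<\infty .
\]
Then, by Cauchy--Schwarz, the middle contributions are bounded by $\big(\sum_k\e_k\big)^2\sup_k a_k^2$, which gives the upper $\ell_2$-estimate.

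Obtaining this requires uniform smallness of $\tau\mapsto\tau^*(x_n)$ over the finitely many cylinder sets determined by the earlier levels. This is the main obstacle. The functions $f_n(\tau)=\tau^*(x_n)$ are continuous on the Cantor set $\Gamma$ and bounded by $\sup_n\|x_n\|$. They tend to $0$ pointwise by hypothesis, but pointwise convergence does not give uniform convergence.

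My first attempt will be to exploit the freedom in where the middle crossings occur. A segment crossing $I_{n_k}$ from a node at level $\ell_{n_{k-1}}$ passes through the bands of all intermediate $x_n$ that were discarded. I will use dominated convergence, with respect to the probability measures on $\Gamma$ coming from the finitely many nodes, together with a Ramsey/diagonal argument to select $n_k$. The goal is to make the squared middle sums small on average over the $2^{\ell_{n_{k-1}}}$ relevant paths, and to handle the few exceptional paths by absorbing them into the head/tail terms with a slightly larger constant. If this fails, the fallback is to use the Lindenstrauss--Stegall-type fact: a subsequence has $\tau^*(x_{n_k})\to0$ uniformly off a countable set of branches. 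These countably many branches would then be handled one at a time by diagonalization.
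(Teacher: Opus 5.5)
Your lower estimate, the deduction of the ``in particular'' clause, and the head/middle/tail splitting are fine. In particular, heads and tails are correctly bounded by $\sum_k a_k^2\|x_k\|^2$ via the partition in each band. The gap is the middle term, and it is not a technicality: the condition you aim for is unattainable. Every branch passes through some node of the level you sum over, so your displayed condition forces $\sup_{\tau\in\Gamma}|\tau^*(x_{n_k})|\le\varepsilon_k\to0$. Now take $x_n=e_{t_n}$ with $(t_n)$ an antichain of nodes at increasing levels. It satisfies every hypothesis, yet $\sup_\tau|\tau^*(x_n)|=1$ for all $n$.

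The same example refutes your fallback. The branches through $t_n$ form an uncountable set, so no subsequence converges uniformly off a countable set of branches. The Lindenstrauss--Stegall-type fact only gives pointwise convergence, which you already have by hypothesis.

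Your first attempt does not repair this either. Dominated convergence gives smallness on average over paths, but the norm is a supremum over partitions, and a partition may put all its segments on the worst paths. ``Absorbing exceptional paths into the head/tail terms'' is legitimate only if each segment is exceptional for at most one (or boundedly many) bands. If a single segment could collect values $|\tau^*(x_{n_k})|\ge\varepsilon$ from many bands, the middle term would behave like the $J$-norm along a branch. Nothing in your plan excludes this, and this is exactly where the hypothesis must be used combinatorially.

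The paper does not reprove the theorem; it cites Amemiya--Ito and notes that their proof uses Ramsey's theorem for pairs and an iteration. That is the missing ingredient here, in the form of the following lemma: for every $\varepsilon>0$ there is a subsequence such that no branch $\tau$ satisfies $|\tau^*(x_{n_k})|\ge\varepsilon$ for two distinct $k$.

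To prove it, let $x_n$ live on levels $[l_n,u_n]$. The nodes of level $l_n$ lying on some branch $\tau$ with $|\tau^*(x_n)|\ge\varepsilon$ number at most $\sup_n\|x_n\|^2/\varepsilon^2$, because the corresponding pieces of these branches are disjoint segments. Colour a pair $m<n$ by which of these boundedly many nodes, at levels $l_m$ and $l_n$, a common ``large'' branch passes through (or ``good'' if there is none). A homogeneous set with a bad colour yields a chain of such nodes, hence a branch $\sigma$ with $|\sigma^*(x_m)|\ge\varepsilon$ for infinitely many $m$, a contradiction.

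Next, iterate the lemma. At stage $j$, apply it with threshold $\varepsilon_j$ to the tail from which $y_j,y_{j+1},\dots$ are drawn. Fix $\varepsilon_j$ before choosing $y_j$, so that $\sum_j 2^{u(y_{j-1})}\varepsilon_j^2<\infty$; this is possible because at most $2^{u(y_{j-1})}$ disjoint segments can cross the band of $y_j$. Along each segment, split the indices $k$ in its middle into three groups:
\begin{itemize}
\item The first index with $|\tau^*(y_k)|\ge\varepsilon_k$. This term is absorbed, like the heads, by the partition bound in that band.
\item Every later index with $|\tau^*(y_k)|\ge\varepsilon_k$. Each such value is below the threshold of the preceding index of this kind.
\item The remaining indices, whose values are below $\varepsilon_k$.
\end{itemize}
By Cauchy--Schwarz, the last two groups contribute at most a constant multiple of $\big(\sum_k 2^{u(y_{k-1})}\varepsilon_k^2\big)\sum_k a_k^2$. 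This gives the upper $\ell_2$-estimate your plan is missing.
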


This result is a key ingredient for understanding the structure of $JT$. Its proof is based on the Ramsey Theorem for two-point sets and an iteration.

Next, we show that it also provides short and clear proofs of the basic properties of $JT$ and $JT^*$. See \cite{AMM} for variants of Theorem \ref{A-I}.

The following definition and proposition are implicitly included in \cite{LS}.

\begin{definition}\label{D2}
A bounded sequence $(x_n)_n$ in $JT$ admits a non-zero $\ell_2$-vector if there exists sequences $(\alpha_i)_{i \in F}$, $(\sigma_i)_{i\in F}$ with either $F=\N$ or $F=\{1,\dots k\}$ such that $\alpha_i \neq 0$, $\sigma_i^*(x_n) \to \alpha_i$ and for every $\sigma \notin \{\sigma_i:i\in F\}$ we have $\sigma^*(x_n) \to 0$.
\end{definition}

We denote by $\#G$ the cardinal of the set $G$. The following fact is easily proved.

\begin{lemma}\label{L1}
Assume that $(x_n)_n$ is a normalized block sequence in $JT$. For $\epsilon > 0$ we set $G_{\epsilon} = \{ \sigma : \liminf_n |\sigma^*(x_n)| \geq \epsilon \}$.
Then $\#G_{\epsilon}\leq 1/\epsilon^2$.
\end{lemma}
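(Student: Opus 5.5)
Suppose, towards a contradiction, that $G_\epsilon$ contains $m$ distinct branches $\sigma_1,\dots,\sigma_m$ with $m>1/\epsilon^2$. The plan is to build, for one $x_n$ with $n$ large, a partition of the tree into disjoint segments whose sums are close to $\sigma_i^*(x_n)$. Then \eqref{JT-P} would give $\|x_n\|_{JT}^2\gtrsim m\epsilon^2>1$, contradicting $\|x_n\|=1$. It suffices to show that every finite subset of $G_\epsilon$ has at most $1/\epsilon^2$ elements.

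First choose $k$ by fact a) of the preliminaries so that the final segments $\sigma_{1,>k},\dots,\sigma_{m,>k}$ are pairwise incomparable. Since the segments are incomparable, these final segments are pairwise disjoint: a common node would make their first elements comparable. Since $(x_n)_n$ is a block sequence, $\supp x_n\subset(k,\infty)$ for all $n$ large enough. For such $n$ we then have $\sigma_i^*(x_n)=(\sigma_{i,>k})^*(x_n)$, because the nodes of $\sigma_i$ lying at or below $k$ contribute nothing.

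Next, fix $\delta>0$ with $m(\epsilon-\delta)^2>1$. By the definition of $\liminf$, there is $n_0$ such that $|\sigma_i^*(x_n)|\ge\epsilon-\delta$ for every $n\ge n_0$ and every $i\le m$; only finitely many indices are involved, so one $n_0$ works for all of them. Pick $n\ge n_0$ that is also large enough for the previous step. Since $x_n$ is finitely supported, we can truncate each $\sigma_{i,>k}$ to a finite segment $s_i$ containing $\supp x_n\cap\sigma_{i,>k}$. The segments $s_1,\dots,s_m$ are pairwise disjoint and satisfy $s_i^*(x_n)=\sigma_i^*(x_n)$. Therefore
\[
1=\|x_n\|_{JT}^2\ \ge\ \sum_{i=1}^m \big(s_i^*(x_n)\big)^2\ \ge\ m(\epsilon-\delta)^2>1,
\]
which is a contradiction. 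Hence $\#G_\epsilon\le 1/\epsilon^2$.

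The only point needing care is the disjointness of the segments. This is exactly where the block hypothesis enters: it lets us discard the common initial parts of the branches, which would otherwise overlap. Each individual step is routine.
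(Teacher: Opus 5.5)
Your proof is correct and is essentially the argument the paper intends. The paper only says Lemma~\ref{L1} is ``easily proved'', but the same reasoning appears in the proof of Proposition~\ref{P1}(b)(ii). There one passes to incomparable, hence pairwise disjoint, final segments $\sigma_{i,>m}$. The block property is used to replace $\sigma_i^*(x_n)$ by $\sigma_{i,>m}^*(x_n)$, and $\|x_n\|$ is then bounded below by $\|x_n\|_P$ for that partition.
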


\begin{proposition}\label{P1}
Let $(x_n)_n$ be a seminormalized block sequence in $JT$. Then one of the following two alternatives hold:
\begin{enumerate}
    \item[a)] There exists a weakly null subsequence $(x_{n_k})_k$ of $(x_n)_n$.
    \item[b)] (i) There exists a subsequence $(x_{n_k})_k$ admitting a non-zero $\ell_2$-vector.\\
    (ii) If $(\alpha_i)_{i \in F}$, $(\sigma_i)_{i\in F}$ determine the $\ell_2$-vector, then $\liminf_k \|x_{n_k}\| \geq \|(\alpha_i)\|_2$.
\end{enumerate}
\end{proposition}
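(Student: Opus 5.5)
We prove Proposition \ref{P1} by a diagonal extraction that identifies the branches along which the sequence keeps mass, and then use Theorem \ref{A-I} to recognize the case in which no such branch exists.

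\emph{Extracting the limits.} Normalizing does not change the statement, so assume $\|x_n\|\le 1$. For each $m\in\N$, Lemma \ref{L1} (applied to the normalized sequence, after adjusting $\epsilon$ by the seminormalization constants) shows that the set of branches $\sigma$ with $\limsup_n|\sigma^*(x_n)|\ge 1/m$ along a given subsequence is finite. The lemma is stated with $\liminf$, so we apply it to further subsequences; the $1/\epsilon^2$ bound holds uniformly, which makes the two versions interchangeable here. The plan is then as follows.
\begin{enumerate}
\item[(1)] At stage $m$ we pass to a subsequence $L_m\subset L_{m-1}$ such that $G^{(m)}=\{\sigma:\limsup_{n\in L_m}|\sigma^*(x_n)|\ge 1/m\}$ is stable under passing to further subsequences, and $\sigma^*(x_n)$ converges along $L_m$ for every $\sigma\in G^{(m)}$. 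Stability is possible because the cardinality of this set is bounded by $m^2$ for every subsequence. We therefore choose a subsequence maximizing the number of branches that keep $\limsup\ge 1/m$ under every further subsequence, and then use finitely many further extractions to make these finitely many scalar sequences converge.
\item[(2)] Let $L$ be a diagonal subsequence of the $L_m$. Then along $L$ every $\sigma\in\bigcup_m G^{(m)}$ has a limit $\alpha_\sigma$. Every other branch $\sigma$ satisfies $\limsup_{n\in L}|\sigma^*(x_n)|<1/m$ for all $m$, so $\sigma^*(x_n)\to 0$.
\end{enumerate}
In particular, every branch has a limit along $L$. The branches with nonzero limit form a countable set $\{\sigma_i:i\in F\}$.

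\emph{The main obstacle.} The delicate point in the extraction is the following. Branches outside $G^{(m)}$ at stage $m$ could regain $\limsup\ge 1/m$ along later subsequences. The maximality choice in step (1) rules this out, because such a branch would enlarge the stable set. I expect this stabilization to be the main technical step.

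\emph{Case a).} If $F$ is empty, then $\sigma^*(x_n)\to 0$ along $L$ for every $\sigma\in\Gamma$. Theorem \ref{A-I} then gives a further subsequence equivalent to the $\ell_2$ basis, which is weakly null. This is alternative a).

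\emph{Case b).} If $F$ is nonempty, alternative b)(i) holds by construction. For b)(ii), fix $k\in F$ and $\delta>0$. By note a) of the preliminaries, there is $N$ such that the final segments $\sigma_{1,>N},\dots,\sigma_{k,>N}$ are incomparable, and hence pairwise disjoint. Since $(x_n)$ is a block sequence, for large $n$ the support of $x_n$ lies beyond $N$. Therefore
\[
\sigma_i^*(x_n)=\sigma_{i,>N}^*(x_n)\quad\text{for } i\le k.
\]
Using the partition $P=\{\sigma_{i,>N}\}_{i\le k}$ and the inequality \eqref{JT-P}, we get
\[
\|x_n\|\ge\Big(\sum_{i\le k}\sigma_i^*(x_n)^2\Big)^{1/2}\xrightarrow[n\in L]{}\Big(\sum_{i\le k}\alpha_i^2\Big)^{1/2}.
\]
Letting $k\to\infty$ gives $\liminf_k\|x_{n_k}\|\ge\|(\alpha_i)\|_2$. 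The same estimate shows that $(\alpha_i)\in\ell_2$.
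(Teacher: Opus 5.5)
Your proof follows the paper's route: an iterated extraction controlled by Lemma \ref{L1}, a diagonal subsequence $L$, Theorem \ref{A-I} when no branch survives, and the disjoint final segments $\sigma_{i,>N}$ for b)(ii). Defining $G^{(m)}$ with $\limsup$ is more careful than the paper. The paper's condition (2) only bounds $\liminf_{n\in L_j}|\sigma^*(x_n)|$, which does not by itself give $\sigma^*(x_n)\to 0$ along $L$. The stabilization you isolate is exactly what the paper's "it is easy to see" needs.

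The justification you give for that stabilization is false, however. For an arbitrary subsequence $M$, the set $\{\sigma:\limsup_{n\in M}|\sigma^*(x_n)|\ge 1/m\}$ need not be finite. The $\liminf$ and $\limsup$ versions of Lemma \ref{L1} are not interchangeable. For the normalized block sequence $x_n=e_n$, every branch satisfies $\limsup_n|\sigma^*(x_n)|=1$ and $\liminf_n|\sigma^*(x_n)|=0$. So "bounded by $m^2$ for every subsequence" is not what makes stability possible.

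What works is the maximization you describe, for the following reason. A branch $\sigma$ keeps $\limsup\ge 1/m$ along every further subsequence of $M$ if and only if $\liminf_{n\in M}|\sigma^*(x_n)|\ge 1/m$. So the set you maximize is the set $G_{1/m}$ of Lemma \ref{L1} computed along $M$. It has at most $m^2$ elements when $\|x_n\|\le 1$, and it can only grow when $M$ is shrunk. Choose $M\subset L_{m-1}$ maximizing its cardinality. Suppose some $\sigma$ outside this set had $\limsup_{n\in M}|\sigma^*(x_n)|\ge 1/m$. Then a subsequence $M'\subset M$ along which $|\sigma^*(x_n)|$ converges to this $\limsup$ would produce a strictly larger set, contradicting maximality. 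Hence along $M$, and along every further subsequence (in particular your $L_m$), the $\limsup$-set equals this finite stable set.

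The obstacle as you phrase it, branches regaining $\limsup\ge 1/m$ along later subsequences, cannot occur, since $\limsup$ never increases under passing to a subsequence. The real point is finiteness of the $\limsup$-set along $L_m$, and that is what maximality supplies. With this repair the rest of your argument, including b)(ii), is correct.
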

\begin{proof}
Assume that (a) does not hold. Then using Lemma \ref{L1} and iteration we choose $(L_j)_{j\in \N}$ a decreasing sequence of infinite subsets of $\N$ and $(G_j)_{j\in\N}$ an increasing sequence of finite subsets of $\Gamma$ such that the following hold:

(1) For every $\sigma \in G_j$, we have $\lim_{n\in L_j}\sigma^*(x_n) = \alpha_{\sigma}$ and $|\alpha_{\sigma}| \geq \frac{1}{j}$.

(2) For every $\sigma \notin G_j$, $\liminf_{n\in L_j} |\sigma(x_n)| < \frac{1}{j}$.

Choose an infinite $L$ almost contained in each $L_j$ and set $G = \cup_j G_j$. The set $G$ is countable and it is easy to see that for $\sigma \in G$, $\lim_{n\in L} \sigma^*(x_n) = \alpha_{\sigma} \neq 0$ while for $\sigma \notin G$, $\lim_{n\in L} \sigma^*(x_n) =0$.


We now prove the last part of the statement. Fix $N\in F$ and choose $m\in\mathbb{N}$ such that the segments $\sigma_{1,>m},\ldots,\sigma_{N,>m}$ are pairwise disjoint. Define the partition $P_{N} = \{\sigma_{i,>m}\}_{i=1}^N$. Because $(x_{n_k})_k$ is a block sequence, for $k$ sufficiently large, we have that $(\sigma_{i,>m}(x_{n_k}))_{i=1}^N = (\sigma_i(x_{n_k}))_{i=1}^N$.  Therefore,
\begin{align*}
\liminf_k\|x_{n_k}\| &\geq \liminf_k\|x_{n_k}\|_{P_N} = \liminf_k\Big(\sum_{i=1}^N|\sigma^*_{i,>m}(x_{n_k})|^2\Big)^{1/2}\\
& = \liminf_k\Big(\sum_{i=1}^N|\sigma^*_{i}(x_{n_k})|^2\Big)^{1/2} = \Big(\sum_{i=1}^Na_i^2\Big)^{1/2}. \qedhere
\end{align*}
\end{proof}

Proposition \ref{P1} and Theorem \ref{A-I} yield the following basic property of $JT$ proved by James in \cite{James:74}.

\begin{theorem}\label{TH_JT}
The space $JT$ does not contain an isomorphic copy of $\ell_1$.
\end{theorem}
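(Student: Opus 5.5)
We argue by contradiction. Suppose $JT$ contains a subspace isomorphic to $\ell_1$, with a normalized sequence $(y_n)_n$ equivalent to the unit vector basis of $\ell_1$. The plan is to extract from it a seminormalized block sequence that is still equivalent to the $\ell_1$ basis, and then to show, using Proposition \ref{P1} and Theorem \ref{A-I}, that some subsequence has an upper $\ell_2$-estimate. That is incompatible with an $\ell_1$ lower estimate.

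\emph{Reduction to blocks.} The sequence $(y_n)_n$ is bounded and $(e_n)_n$ is a basis, so a diagonal argument gives a subsequence with $\lim_k e_i^*(y_{n_k})$ existing for every $i$. The differences $z_k=y_{n_{2k}}-y_{n_{2k+1}}$ are coordinatewise null. They are still equivalent to the $\ell_1$ basis: they are bounded above by $2$, and by the $\ell_1$ lower estimate $\|\sum_k c_k z_k\|\geq c\sum_k|c_k|$. A gliding hump argument then yields a subsequence of $(z_k)_k$ that is a small perturbation of a block sequence $(x_k)_k$ of $(e_n)_n$. Since the basis is bimonotone, the perturbations can be made summably small, so $(x_k)_k$ is seminormalized and equivalent to the $\ell_1$ basis.

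\emph{Applying Proposition \ref{P1}.} If alternative (a) holds, then some subsequence of $(x_k)_k$ is weakly null, which is impossible for a sequence equivalent to the $\ell_1$ basis. (Alternatively, one could pass to a further subsequence via Theorem \ref{A-I} and get the $\ell_2$ basis, again a contradiction.) So alternative (b) holds, and after passing to a subsequence $(x_k)_k$ admits a non-zero $\ell_2$-vector determined by $(\alpha_i)_{i\in F}$ and $(\sigma_i)_{i\in F}$, with $(\alpha_i)\in\ell_2$ by (b)(ii). Consider the differences $d_k=x_{2k}-x_{2k+1}$. These form a block sequence and, again by the $\ell_1$ lower estimate, are seminormalized and equivalent to the $\ell_1$ basis. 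For every branch $\sigma$ we have $\sigma^*(d_k)\to 0$: for $\sigma=\sigma_i$ both terms tend to $\alpha_i$, and for all other branches both terms tend to $0$. Theorem \ref{A-I} therefore gives a subsequence of $(d_k)_k$ equivalent to the $\ell_2$ basis, contradicting its $\ell_1$-equivalence. This completes the argument.

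\emph{Main obstacle.} The one delicate step is the reduction to a block sequence. The rest is a direct combination of Proposition \ref{P1} and Theorem \ref{A-I}. In fact, the difference trick shows that Proposition \ref{P1} is only needed to guarantee that $\lim_k\sigma^*(x_k)$ exists for every branch simultaneously along one subsequence. That is the substantive content here, since there are uncountably many branches.
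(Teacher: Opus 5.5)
Your proof is correct and follows essentially the same route as the paper. Both arguments reduce to a seminormalized block sequence equivalent to the $\ell_1$ basis, use Proposition~\ref{P1} to pass to a subsequence admitting a non-zero $\ell_2$-vector, take differences of consecutive terms so that $\sigma^*(\cdot)\to 0$ for every branch, and get a contradiction from Theorem~\ref{A-I}. The only difference is that you spell out the gliding-hump reduction and dispose of alternative (a) explicitly, which the paper leaves implicit.
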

\begin{proof}
As is well known and follows from a standard gliding hump argument, whenever $\ell_1$ is isomorphic to a subspace of a Banach space with a basis, there exists a block sequence of that basis equivalent to the unit vector basis of $\ell_1$. In particular, if $\ell_1$ is embedded into $JT$ then there exists a seminormalized block sequence $(x_n)_n$ equivalent to the $\ell_1$ basis. Then Proposition \ref{P1} implies that there exists a subsequence $(x_{n_k})_k$ of $(x_n)_n$ admitting a non-zero $\ell_2$-vector. Setting $y_k = x_{n_{2k+1}} - x_{n_{2k}}$, we have $\sigma^*(y_k) \to 0$ for every $\sigma \in \Gamma$. Theorem \ref{A-I} yields that $(y_k)_k$ is a weakly null sequence, a contradiction.
\end{proof}

\subsection{The spaces $\B$, $JT^*$}
We denote $\B=[e^*_{k}]$, the closed subspace of $JT^*$ generated by the sequence $(e^*_{k})$, and $\|\cdot\|_\B$ the norm on $\B$. Note that $\{e^*_{k} : k \in \N\}$ is a shrinking basis of $\B$. Therefore $\B^*= JT$ and, by \cite[Proposition 1.b.2]{L-T:77}, we have
\begin{equation}\label{normJT*}
JT^* = \Big\{x=(x_t)_{t\in T}\subset \R : \sup_{n\in\N} \Big\|\sum_{i=1}^n x_{t_i} e^*_{t_i}\Big\|_{\B}< \infty \Big\}.
\end{equation}

\begin{proposition}\label{P3}
The family $D = \{ e_n^* :n\in \N\} \cup \{ \sigma^*: \sigma \in \Gamma\}$ norm-generates $JT^*$.
\end{proposition}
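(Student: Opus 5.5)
To prove Proposition \ref{P3}, I would show that for every $x \in JT$
\[
\|x\|_{JT} = \sup\{|f(x)| : f \in \overline{\mathrm{co}}^{w^*}(\pm D)\},
\]
or, more concretely, that $\|x\|_{JT} = \sup\{ |g(x)| : g \in K\}$, where $K$ is the set of combinations $\sum_j \lambda_j s_j^*$ with $(s_j)$ disjoint segments and $\sum_j \lambda_j^2 \le 1$. The meaning of "norm-generates" here is that the unit ball of $JT^*$ is the weak$^*$-closed absolutely convex hull of $D$; equivalently, $D$ is norming with constant one. This is how I will treat it, since that is exactly what is needed later for the $\ell_2(\Gamma^*)$ quotient description.

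The first step is trivial. Every $e_n^*$ and every $\sigma^*$ has norm $\le 1$. Indeed, $|\sigma^*(x)| = \lim_n |\sigma_{\le n}^*(x)| \le \|x\|$ for finitely supported $x$, and this passes to all of $JT$ by density; the limit exists since $x \in JT$ makes the partial sums along $\sigma$ Cauchy by the definition of the norm. Hence $\overline{\mathrm{aco}}^{w^*}(D) \subseteq B_{JT^*}$.

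For the reverse inclusion, by the bipolar theorem it suffices to show that $\sup_{f\in D}|f(x)| \ge$ something only up to convex combinations. Directly, $\sup_{D}|f(x)|$ is not the norm, so I would instead show that every finite segment functional $s^*$ lies in $\overline{\mathrm{aco}}^{w^*}(D)$, and then handle the $\ell_2$-combinations. A finite segment functional is a finite sum of $e_n^*$'s, not a convex combination. So the right approach is to show that $B_{JT^*} = \overline{\mathrm{aco}}^{w^*}(\{ \sum_j \lambda_j s_j^*\})$, which is immediate from the definition of the norm: $\|x\| = \sup_P \sup_{\|\lambda\|_2\le1}\sum_j\lambda_j s_j^*(x)$. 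Then I would show that every such functional lies in the norm-closed linear span generated by $D$, i.e.\ that the closed linear span of $D$ is dense in $JT^*$ in the appropriate sense. Segments $s$ are either finite, hence in $\mathrm{span}\{e_n^*\}$, or final segments of branches, $\sigma_{>m}^* = \sigma^* - \sum_{k\in\sigma, k\le m} e_k^*$. So every norming functional from the definition, together with its infinite-segment versions, lies in $\overline{\mathrm{span}}(D)$.

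The main obstacle is showing that the norm closure of $\mathrm{span}\, D$ is all of $JT^*$, rather than merely weak$^*$ dense. My plan is to take $f\in JT^*$ and use Proposition \ref{P1} and Theorem \ref{A-I} on the block sequence $x_n$ witnessing $\mathrm{dist}(f, \overline{\mathrm{span}} D)>0$ via Hahn--Banach. I would pick $\phi\in JT^{**}$ annihilating $D$ with $\phi(f)=1$. Since $\phi$ annihilates all $e_n^*$, $\phi$ is a $w^*$-limit of a bounded block sequence $(x_n)$ (taking tails of a sequence in $JT$ converging $w^*$ to $\phi$ by Goldstine, using the basis $(e_n)$ of $JT$ and $\phi(e_n^*)=0$). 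Then $\sigma^*(x_n)\to\phi(\sigma^*)=0$ for all $\sigma$, so Theorem \ref{A-I} gives that $(x_n)$ is weakly null. Hence $\phi=0$, a contradiction. Making the Goldstine-plus-block approximation rigorous, i.e.\ passing from a net to a sequence, which uses separability of $JT$ and the absence of $\ell_1$ from Theorem \ref{TH_JT} via Odell--Rosenthal, is the delicate point I expect to spend effort on.
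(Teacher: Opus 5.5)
Your final paragraph is exactly the paper's proof. A nonzero $x^{**}\in JT^{**}$ vanishing on $D$ is, by Odell--Rosenthal (since $JT$ is separable and contains no $\ell_1$), the $w^*$-limit of a bounded sequence; this can be taken to be a block sequence because $x^{**}(e_n^*)=0$, and then $\sigma^*(x_n)\to 0$ for all $\sigma$ together with Theorem \ref{A-I} forces $x_n\to 0$ weakly, i.e.\ $x^{**}=0$.

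The earlier material can be dropped. Here ``norm-generates'' means $\overline{\mathrm{span}}\,D=JT^*$, and the reading ``$D$ is norming with constant one'' is false: for incomparable nodes $a,b$ one has $\|e_a+e_b\|=\sqrt{2}$ while $\sup_{f\in D}|f(e_a+e_b)|=1$.
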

\begin{proof}
Assume that the conclusion fails. Then there exists an $x^{**} \in JT^{**}$ such that $x^{**} \neq 0$ and $D \subset \ker(x^{**})$.
Since $\ell_1$ is not isomorphic to a subspace of $JT$, there exists a bounded sequence $(x_n)_n$ in $JT$ such that $x_n \xrightarrow{w^*} x^{**}$ (\cite{OR}). Since $e_k^*(x_n) \to 0$ we may assume that $\{x_n\}_n$ is a block sequence \cite[Proposition 1.3.10]{Al-K:16}. Then for every $\sigma \in \Gamma$, we have that $\sigma^*(x_n) \to 0$ and Theorem \ref{A-I} yields that $x_n \xrightarrow{w} 0$, a contradiction.
\end{proof}

The following is also well-known.

\begin{lemma}\label{L3}
The set
\begin{equation}
W = \Bigl\{ \sum_{i=1}^n \lambda_i s_i^* : \sum_{i=1}^n \lambda_i^2 \leq 1 \text{ and } \{s_i\}_{i=1}^n \text{ pairwise disjoint segments} \Bigr\}
\end{equation}
is a subset of $B_{JT^*}$ which is norming for $JT$.
\end{lemma}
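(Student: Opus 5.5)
Two things have to be checked: that $W\subset B_{JT^*}$, and that $\|x\|_{JT}=\sup_{f\in W}|f(x)|$ for every $x\in JT$.

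\textbf{Step 1: $W\subset B_{JT^*}$.} Take $f=\sum_{i=1}^n\lambda_i s_i^*$ with $\sum\lambda_i^2\le 1$ and $s_1,\dots,s_n$ pairwise disjoint segments. For $x\in JT$, Cauchy--Schwarz gives
\[
|f(x)|=\Big|\sum_{i=1}^n\lambda_i s_i^*(x)\Big|\le\Big(\sum_{i=1}^n\lambda_i^2\Big)^{1/2}\Big(\sum_{i=1}^n\big(s_i^*(x)\big)^2\Big)^{1/2}\le\|x\|_{P},
\]
where $P=\{s_i\}_{i=1}^n$. By \eqref{JT-P}, $\|x\|_P\le\|x\|_{JT}$, so $\|f\|\le 1$. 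The only point to watch is that infinite segments are allowed. For those, $s^*$ is defined as the extension of the finite-segment functionals, and the inequality still holds. One way to see this is to apply it first to finitely supported $x$, where only finitely many nodes of each $s_i$ matter, and then pass to the limit by density, since $(e_n)$ is a basis.

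\textbf{Step 2: the norming inequality.} Fix $x\in JT$ and $\e>0$. By definition \eqref{JT-norm}, there are pairwise disjoint segments $s_1,\dots,s_k$ with
\[
\Big(\sum_{j=1}^k\big(s_j^*(x)\big)^2\Big)^{1/2}>\|x\|_{JT}-\e .
\]
If all the values $s_j^*(x)$ vanish, then $\|x\|_{JT}<\e$ and there is nothing to prove. Otherwise set $c=\big(\sum_j (s_j^*(x))^2\big)^{1/2}>0$ and $\lambda_j=s_j^*(x)/c$. Then $\sum_j\lambda_j^2=1$, so $f=\sum_j\lambda_j s_j^*\in W$, and $f(x)=c>\|x\|_{JT}-\e$.

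Combining the two steps gives $\|x\|_{JT}=\sup_{f\in W}f(x)$, so $W$ is norming for $JT$ (in fact isometrically, with constant $1$). Neither step is a real obstacle. The only delicate point is the one noted in Step 1: making sure the functionals $s^*$ attached to infinite segments are well defined and still satisfy the Cauchy--Schwarz bound, which follows from the approximation argument given there.
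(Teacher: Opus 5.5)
Your proof is correct and follows the paper's argument: Cauchy--Schwarz gives $W\subset B_{JT^*}$, and normalizing the values $s_j^*(x)$ over a (near-)optimal disjoint family of segments produces an element of $W$ nearly attaining $\|x\|_{JT}$. The only difference is that the paper treats $x\in c_{00}$, where the supremum is attained exactly, while your $\e$-approximation for arbitrary $x\in JT$ needs neither the attainment nor the implicit density step.
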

\begin{proof}
Let $x^* = \sum_{i=1}^n \lambda_i s_i^* \in W$ and $x\in J$ with $\|x\|=1$. Then Cauchy-Schwarz inequality yields
\[ |x^*(x)| = \Big| \sum_{i=1}^n \lambda_i s_i^*(x) \Big| \leq \Big( \sum_{i=1}^n \lambda_i^2\Big)^{1/2} \Big( \sum_{i=1}^n s_i^*(x)^2\Big)^{1/2} \leq 1, \]
which shows that $W \subset B_{JT^*}$.

Also for $x\in c_{00}$ with $\|x\| = 1$, choose $\{s_i\}_{i=1}^n$ pairwise disjoint segments such that
\[ \Big( \sum_{i=1}^n s_i^*(x)^2\Big)^{1/2} = 1. \]
Then $x^* = \sum_{i=1}^n s_i^*(x)s_i^*\in W$ and $x^*(x) = 1$. This completes the proof.
\end{proof}

The next lemma is implicitly contained in \cite{LS} and will be used for the description of the norm of $JT^*/\B$.

\begin{lemma} \label{L2}
\begin{enumerate}
    \item[a)] Let $\{s_i^*\}_{i=1}^k$ be a disjoint family of segments of $\Tree$. Assume also that there exists a family $\{n_i\}_{i=1}^k$ of incomparable nodes of $\Tree$ such that $n_i \in s_i$, $i= 1 \dots k$. Then for $\{\lambda_i\}_{i=1}^k$ we have
    \[\Big\|\sum_{i=1}^k \lambda_i s_i^* \Big\|= \Big(\sum_{i=1}^k \lambda_i^2\Big)^{1/2}.\]
    \item[b)] In particular, every sequence $\{s_i^*\}_{i\in \N}$ of disjoint infinite segments in $JT^*$ is isometric to the usual basis of $\ell_2$.
\end{enumerate}
\end{lemma}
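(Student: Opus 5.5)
The inequality $\|\sum_i \lambda_i s_i^*\|\le (\sum_i\lambda_i^2)^{1/2}$ is immediate from Lemma \ref{L3}: the $s_i$ are pairwise disjoint, so after normalizing, $\sum_i \lambda_i s_i^*$ lies in $(\sum\lambda_i^2)^{1/2} W\subset (\sum\lambda_i^2)^{1/2}B_{JT^*}$. For the reverse inequality I will build, for each $\varepsilon>0$, a norm-one vector $x\in c_{00}(\Tree)$ with $s_i^*(x)\approx \lambda_i/(\sum_j\lambda_j^2)^{1/2}$.

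The natural test vector is
\[ x=\Big(\sum_{j=1}^k\lambda_j^2\Big)^{-1/2}\sum_{i=1}^k \lambda_i e_{n_i}. \]
Here $n_i\in s_i$, so $s_i^*(e_{n_i})=1$. Also $n_j\notin s_i$ for $j\neq i$, because the $s_i$ are disjoint. Hence $s_i^*(x)=\lambda_i/(\sum_j\lambda_j^2)^{1/2}$, and therefore
\[ \Big(\sum_i\lambda_i s_i^*\Big)(x)=\Big(\sum_i\lambda_i^2\Big)^{1/2}. \]

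It remains to check $\|x\|_{JT}\le 1$. The nodes $n_i$ are pairwise incomparable, and a segment is a chain, so each segment contains at most one $n_i$. Consequently, for any family $t_1,\dots,t_m$ of pairwise disjoint segments, each sum $\sum_{a\in t_j}x(a)$ is either $0$ or a single coefficient $\lambda_i/(\sum\lambda^2)^{1/2}$. Disjointness of the $t_j$ means each $i$ is used at most once. So the partition sum is at most $\sum_i\lambda_i^2/\sum_j\lambda_j^2=1$, which gives $\|x\|_{JT}\le 1$ and the equality in (a).

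This is the step I regard as the key point, and it is where incomparability of the $n_i$ is genuinely used; the rest is bookkeeping.

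For (b), let $\{s_i\}$ be disjoint infinite segments. Each is a final segment of a branch, say $s_i=\sigma_{i,\ge m_i}$. Since the segments are disjoint, the branches are distinct.

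Fix $N$. By fact a) of the preliminaries, choose $m$ with $\sigma_{1,>m},\dots,\sigma_{N,>m}$ incomparable, and take $m\ge\max m_i$. Pick $n_i\in s_i$ to be the first element of $\sigma_{i,>m}$; these nodes are incomparable. Then part (a) gives
\[ \Big\|\sum_{i\le N}\lambda_is_i^*\Big\|=\Big(\sum_{i\le N}\lambda_i^2\Big)^{1/2}. \]
Since $N$ is arbitrary, $(s_i^*)$ is isometrically equivalent to the $\ell_2$ basis. I expect no real obstacle here beyond checking that the chosen $n_i$ still lie in $s_i$, which holds because $m\ge m_i$.
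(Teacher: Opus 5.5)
Your proposal is correct and follows essentially the same route as the paper. The upper bound comes from the norming set $W$ of Lemma \ref{L3}, the lower bound from testing against $\sum_j \lambda_j e_{n_j}$, and (b) from choosing incomparable nodes $n_i\in s_i$ along the distinct branches and applying (a) to each finite subfamily.

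You also supply two details the paper leaves implicit: why $\|\sum_j\lambda_j e_{n_j}\|_{JT}\le(\sum_j\lambda_j^2)^{1/2}$, and how to pick the $n_i$. The one unstated step is that incomparability of the final segments $\sigma_{i,>m}$ persists as $m$ increases; this is what lets you also take $m\ge\max m_i$. It holds because successors of incomparable nodes are incomparable.
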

\begin{proof}
a) Let $\{\lambda_i\}_{i=1}^k$ be such that $(\sum_{i=1}^k \lambda_i^2)^{1/2} = 1$. Then $\sum_{i=1}^k \lambda_i s_i^*\in W$, hence $\|\sum_{i=1}^k \lambda_i s_i^*\| \leq 1$. On the opposite, let $\{n_i\}_{i=1}^k$ be incomparable nodes with $n_i \in s_i$, $i= 1 \dots k$. The definition of the norm of $JT$ yields $\|\sum_{i=1}^k \lambda_i e_{n_i}\| = 1$. Hence
\[ \Big\| \sum_{i=1}^k \lambda_i s_i^*\Big\| \geq \Big| \sum_{i=1}^k \lambda_i s_i^*\Big( \sum_{j=1}^k \lambda_j e_{n_j} \Big) \Big| = 1. \]
This ends the proof of the first part.

b) For each $i\in \N$, there exists a unique branch $\sigma_i$ such that $s_i$ is a final segment of $\sigma_i$. Thus, for every finite subset $F$ of $\N$, there exists $\{n_i\}_{i \in F}$ incomparable nodes of $\Tree$ with $n_i \in s_i$, and part a) implies that $\{s_i^* \}_{i\in F}$ is isometric to the usual basis of $\ell_2^{\#F}$.
\end{proof}

\begin{remark}
Let us observe for later use that given sequence $\{\sigma_i\}_i$ of branches of $\Tree$, there exists a sequence $\{n_i\}_i$ such that the sequence $\{\sigma_{i, > n_i}\}_i$ consists of pairwise disjoint final segments. Indeed, we set $n_1 = 1$ and for $i > 1$ we choose $n_i \in \s_i \setminus \cup_{j < i} \s_j$. Note that the final segments $\{ \sigma_{i, > n_i}\}_i$ are not necessarily incomparable.
\end{remark}

The next theorem is proved by J. Lindenstrauss and C. Stegall in \cite{LS}. Our proof is based on Proposition \ref{P3} which in turn is a consequence of Theorem \ref{A-I}. 
We fix the following notation for the quotient map
\begin{equation}
\label{quotient map notation}
V:JT^*\to JT^*/\B,
\end{equation}
as it will appear several times in the sequel.

\begin{theorem}\label{P2}
Let $V: JT^* \to JT^*/\B$ be the quotient map as in \eqref{quotient map notation}. Then the following holds:
\begin{enumerate}
    \item[a)] The family $\{V(\sigma^*): \sigma \in \Gamma\}$ norm-generates $JT^*/\B$ and it is isometrically equivalent to the usual basis of $\ell_2(\Gamma)$. Hence, $JT^*/\B$ is isometric to a Hilbert space denoted $\ell_2(\Gamma^*)$.
    \item[b)] For every separable closed subspace $X$ of $JT^*/\B$, there exists a closed subspace $Y$ of $JT^*$ such that the quotient map $V$ is an isometry between $Y$ and $X$.
\end{enumerate}
\end{theorem}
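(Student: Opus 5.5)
For part a) I would first prove that the family $\{V(\sigma^*):\sigma\in\Gamma\}$ is isometrically $\ell_2(\Gamma)$, and only then deduce that it norm-generates the quotient.

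\emph{Isometry.} Fix distinct branches $\sigma_1,\dots,\sigma_k$ and scalars $\lambda_i$. By fact a) of the preliminaries there is $m$ such that the final segments $\sigma_{i,>m}$ are incomparable. Since $\sigma_i^*-\sigma_{i,>m}^*$ is a finite combination of the $e_n^*$, it lies in $\B$. Hence
\[
V\Big(\sum_i\lambda_i\sigma_i^*\Big)=V\Big(\sum_i\lambda_i\sigma_{i,>m}^*\Big),
\]
and so
\[
\Big\|V\Big(\sum_i\lambda_i\sigma_i^*\Big)\Big\|\le \Big\|\sum_i\lambda_i\sigma_{i,>m}^*\Big\|=\Big(\sum_i\lambda_i^2\Big)^{1/2}
\]
by Lemma \ref{L2}.

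For the lower bound, take any $b\in\B$ and $\e>0$. Choose $m'\ge m$ with $\|b\circ P_{[1,m']}-b\|<\e$; this is possible because $(e_n^*)$ is a basis of $\B$. Then choose nodes $n_i\in\sigma_{i,>m'}$, which are pairwise incomparable. Test $\sum_i\lambda_i\sigma_i^*+b$ on $x=\sum_j\lambda_j e_{n_j}\big/\|\lambda\|_2$. This vector has norm $1$, and incomparability of the $n_j$ gives $\sigma_i^*(e_{n_j})=\delta_{ij}$. Since $x$ is supported beyond $m'$, we get $|b(x)|<\e$. The value is therefore at least $\|\lambda\|_2-\e$. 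Letting $\e\to0$ gives the reverse inequality, so the family is isometric to the $\ell_2(\Gamma)$ basis.

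\emph{Generation.} By Proposition \ref{P3}, $D$ norm-generates $JT^*$ and $V(e_n^*)=0$. Hence the closed span of the $V(\sigma^*)$ is all of $JT^*/\B$, because $V$ is a surjective quotient map, and $V$ maps the dense span of $D$ onto a dense set. The closed span of an isometric $\ell_2(\Gamma)$ family is isometric to $\ell_2(\Gamma)$, which gives a).

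For part b), let $X$ be separable. Then $X$ is contained in the closed span of countably many $V(\sigma_i^*)$, $i\in\N$: each element of a countable dense subset uses only countably many branches. It suffices to treat $X_0=[V(\sigma_i^*)]$, since a subspace of an isometric copy is again isometric via the restriction of $V$. By the Remark, choose $n_i$ so that the final segments $s_i=\sigma_{i,>n_i}$ are pairwise disjoint. Set $Y_0=[s_i^*]$. By Lemma \ref{L2}b), $(s_i^*)$ is isometrically the $\ell_2$ basis. Also $V(s_i^*)=V(\sigma_i^*)$, again because the difference is a finite combination of the $e_n^*$, and $(V(\sigma_i^*))$ is isometrically the $\ell_2$ basis by a). Therefore $V|_{Y_0}$ maps an orthonormal-type basis to an orthonormal-type basis. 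It is an isometry on finite combinations, hence on $Y_0$, and its image is $X_0$. Finally take $Y=(V|_{Y_0})^{-1}(X)$.

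The main subtle point is the lower estimate in a). There one must control the arbitrary $b\in\B$ by pushing the test vectors far out along the branches, and this is exactly where the fact that $\B$ is the closed span of the $e_n^*$ is used.
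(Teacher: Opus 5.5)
Your proposal is correct and follows essentially the same route as the paper. The upper bound comes from $V(\sigma_i^*)=V(\sigma_{i,>m}^*)$ and Lemma \ref{L2}, generation from Proposition \ref{P3}, and part b) from pairwise disjoint final segments and Lemma \ref{L2}(b); you also spell out the final restriction $Y=(V|_{Y_0})^{-1}(X)$ that the paper leaves implicit. The only variation is that you prove the lower bound for the quotient norm directly, by approximating $b\in\B$ with its partial sums and testing on $\sum_j\lambda_j e_{n_j}$ at incomparable nodes beyond $m'$. This is just an unpacked form of the paper's bimonotonicity identity $\|V(\sum_i\lambda_i\sigma_i^*)\|=\lim_n\|\sum_i\lambda_i\sigma_{i,>n}^*\|$ combined with the lower estimate in Lemma \ref{L2}(a).
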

\begin{proof}
a) Since $JT^*$ is generated by the family $D = \{ e_n^* :n\in \N\} \cup \{ \sigma^*: \sigma \in \Gamma\}$ (Proposition \ref{P3}), it follows that $\{V(\sigma^*): \sigma \in \Gamma\}$ norm-generates $JT^*/\B$. For $\{\sigma_i\}_{i=1}^k \subset \Gamma$, since $\{e_n\}_n$ is bimonotone we get
\[\Big\| V\Big( \sum_{i=1}^k \lambda_i \sigma_i^* \Big) \Big\| = \lim_{n\to \infty} \Big\| \sum_{i=1}^k \lambda_i \sigma_{i,>n}^* \Big\|. \]
We recall that $\sigma_{>n}$ denotes the final segment $\{m\in \sigma : m>n\}$. Lemma \ref{L2} yields that there exists an $n_0 \in \N$ such that for $n > n_0$
\[ \Big\| \sum_{i=1}^k \lambda_i \sigma_{i,>n}^* \Big\| =\Big( \sum_{i=1}^k \lambda_i^2 \Big)^{1/2}. \]
Hence $\{V(\sigma_i^*)\}_{i=1}^k$ is isometric to the $\ell_2^k$ basis, and this proves part (a).

b) Let $\{\sigma_i\}_{i\in \N}$ be a sequence of branches. Then there exists a sequence $\{n_i\}_{i\in \N}$ such that $\{\sigma_{i, >n_i}\}_{i\in \N}$ are pairwise disjoint. Lemma \ref{L2} (b) yields that $\{\sigma_{i, >n_i}^*\}_{i\in \N}$ is isometric to the $\ell_2$ basis and since $V(\sigma_{>n}^*) = V(\sigma^*)$ for all $\sigma$ and $n$, we conclude that
\[ V: \Big[ \{\sigma^*_{i, >n_i}\}_{i\in \N} \Big] \to \Big[ \{V(\sigma_{i}^*)\}_{i\in \N} \Big] \]
is an isometry. Every closed separable subspace $X$ of $JT^*/\B$ is included in a subspace of the form $\big[\{V(\sigma_i^*)\}_{i\in \N}\big]$, and hence the proof is complete.
\end{proof}

\subsection{The Space $JT^{**}$}
For $\sigma \in \Gamma$, the sequence $\{e_n\}_{n\in\sigma}$ is a non-trivial weak Cauchy sequence and $w^*$-$\lim_{n\in \sigma} e_n = \sigma^{**}$. It is easy to check that $\|\sigma^{**}\| = 1$, $\sigma^{**}(e_n^*) = 0$ and $\sigma^{**}(\tau_{>n}^*) = \delta_{\sigma, \tau}$ for all $n \in \N$.

The space $JT^{**}$ coincides with $\B^{***}$, hence $JT^{**} = JT \oplus \B^\perp$ where $\B$ is viewed as a subspace of $JT^*$. The space $\B^\perp$ is isometric to the dual of $JT^*/ \B$, hence it is isometric to the space $\ell_2(\Gamma^{**})$ where $\Gamma^{**} = \{ \sigma^{**} : \sigma \in \Gamma\}$. Therefore $JT^{**} = JT \oplus \ell_2(\Gamma^{**})$.


\begin{proposition}\label{P4}
Let $Z$ be a closed separable subspace of $\ell_2(\Gamma^{**})$. Then $Z_\perp$ is a complemented subspace of $JT^*$.
\end{proposition}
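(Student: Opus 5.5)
Since $Z$ is separable, it lies inside $[\{\sigma_i^{**}\}_{i\in\N}]$ for some sequence of distinct branches $(\sigma_i)_i$. I will reduce to the case $Z = Z_0 := [\{\sigma_i^{**}\}_i]$ and handle a general closed $Z\subset Z_0$ afterwards. The plan is to write down an explicit projection on $JT^*$ whose kernel is the separable piece $Y=[\{\sigma^*_{i,>n_i}\}_i]$ from Theorem \ref{P2}(b), and whose range is $(Z_0)_\perp$.

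\emph{Construction.} Using the Remark after Lemma \ref{L2}, choose $n_i$ so that the final segments $s_i=\sigma_{i,>n_i}$ are pairwise disjoint. By Lemma \ref{L2}(b), $(s_i^*)_i$ is isometrically the $\ell_2$ basis in $JT^*$. On the other side, the $\sigma_i^{**}$ satisfy $\sigma_i^{**}(s_j^*)=\delta_{ij}$, because $s_j^*$ differs from $\sigma_j^*$ by an element of $\B$ and $\sigma^{**}(\tau_{>n}^*)=\delta_{\sigma,\tau}$. Moreover $(\sigma_i^{**})_i$ is isometrically the $\ell_2$ basis in $JT^{**}$. Define
\[
Px^*=\sum_i \sigma_i^{**}(x^*)\, s_i^*, \qquad x^*\in JT^*.
\]
This is bounded with $\|P\|\le 1$, because $(\sigma_i^{**}(x^*))_i$ is the coefficient sequence of $x^*$ restricted to $Z_0\cong\ell_2$, so it has $\ell_2$-norm at most $\|x^*\|$. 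Biorthogonality gives $P s_j^*=s_j^*$, hence $P^2=P$. The kernel of $P$ is $\{x^*:\sigma_i^{**}(x^*)=0\ \forall i\}=(Z_0)_\perp$. So $(Z_0)_\perp$ is complemented, by $I-P$.

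\emph{Proper subspaces.} Now let $Z\subset Z_0$ be an arbitrary closed subspace. The map $\Phi: Y\to Z_0$ given by $y\mapsto (\sigma_i^{**}(y))_i$ is a surjective isometry (it is essentially $V$ restricted to $Y$). Put $Y_Z=\Phi^{-1}(Z)$. In the Hilbert space $Y$, let $Q$ be the orthogonal projection onto the subspace $W\subset Y$ consisting of the $y$ with $\langle z,\Phi y\rangle=0$ for all $z\in Z$. I then want to check two things: that $(I-P)+QP$ is a projection, and that its range is $Z_\perp$. The point is that $x^*\in Z_\perp$ exactly when the coefficient vector $\Phi(Px^*)$ is orthogonal to $Z$ in $\ell_2$, i.e. $Px^*\in W$. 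So we get the decomposition
\[
JT^*=\ker P\oplus W\oplus W^{\perp_Y},
\]
in which $Z_\perp=\ker P\oplus W$. This shows $Z_\perp$ is complemented, with projection $(I-P)+QP$ of norm at most $3$.

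The main obstacle is justifying the two facts about $Z_0$ and the pairing. First, that $(\sigma_i^{**})$ restricted to elements of $JT^*$ yields an $\ell_2$ sequence with the correct norm bound. Second, that evaluating $\sigma_i^{**}$ on $x^*$ coincides with the coordinate of $V(x^*)$ in $\ell_2(\Gamma^*)$, i.e. that the duality $\B^\perp=(JT^*/\B)^*$ matches $\sigma^{**}$ with $V(\sigma^*)$. Both follow from Theorem \ref{P2}(a) together with $\sigma^{**}|_\B=0$, but this identification must be stated carefully.
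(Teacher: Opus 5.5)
Your proof is correct and is essentially the paper's argument: the paper takes an orthonormal basis $(z_k^{**})$ of $Z$, lifts it to $z_k^*=\sum_i\lambda_i^k\sigma^*_{i,>n_i}$ in the isometric copy $Y=[\sigma^*_{i,>n_i}]$ of $\ell_2$, and uses the projection $x^*\mapsto\sum_k z_k^{**}(x^*)z_k^*$, whose kernel is $Z_\perp$. Your projection $(I-P)+QP$ is exactly $I$ minus this projection, because $z_k^*=\Phi^{-1}(z_k^{**})$ is an orthonormal basis of $W^{\perp_Y}=\Phi^{-1}(Z)$; so your detour through $Z_0$ is just a factorization of the same construction.
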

\begin{proof}
Each $z^{**} \in Z$ is of the form
\[ z^{**} = \sum_{\sigma \in F} \lambda_\sigma \sigma^{**} = \sum_{\sigma \in F} z^{**}(\sigma^*) \sigma^{**} \]
with $F$ at most countable. The set $G= \{ \sigma : \exists z^{**} \in Z \text{ with } z^{**}(\sigma^*) \neq 0\}$ is countable because $Z$ is separable.
If $G = \{ \sigma_i \}_{i\in \N}$ and we choose $\{n_i\}_{i\in \N}$ such that $\{\sigma_{i, >n_i}\}_{i\in \N}$ are pairwise disjoint final segments, Lemma \ref{L2} (b) yields that the space $Y = [\{\sigma_{i, >n_i}\}_{i\in \N}]$ is isometric to $\ell_2$.

Let $\{z^{**}_k\}$ be an orthonormal basis of $Z$. Then $z^{**}_k = \sum_{i=1}^\infty \lambda_i^k \sigma_i^{**}$, $k \in \N$. We set $z_k^* = \sum_{i=1}^\infty \lambda_{i}^k \sigma_{i, > n_i}^*$.
Then $\{z_k^*\}_{k\in \N}$ and $\{z_k^{**}\}_{k\in \N}$ are biorthogonal, and both of them isometrically equivalent to the $\ell_2$ basis. Hence the expression
\[ P(x^*) = \sum_{k=1}^\infty z_k^{**}(x^*) z_k^* \]
defines a projection $P$ on $JT^*$, and clearly $\ker P = Z_\perp$.
\end{proof}

Recall that every vector in $JT^*$ can be represented by a sequence of scalars $(x_k)$ and the norm is given by Formula (\ref{normJT*}).

\begin{proposition}\label{F-G}
Let $(z_k)$ be a seminormalized sequence in $JT^*$.
Suppose that every segment $S$ in $\Tree$ intersects at most once $\supp(z_k)$ and $S\cap \supp(z_k)$ is always a segment. Then $(z_k)$ is equivalent to the unit vector basis of $\ell_2$.
\end{proposition}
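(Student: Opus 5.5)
The plan is to prove the upper and the lower $\ell_2$-estimate separately. Both rest on the projections $P_{A}$ of Proposition \ref{P_A} applied to $A_k=\supp(z_k)$. By hypothesis $s\cap A_k$ is a segment (possibly empty) for every segment $s$, so each $P_{A_k}$ is a norm-one projection on $JT$. Since single nodes are segments, the hypothesis also forces the sets $A_k$ to be pairwise disjoint. Every $x^*\in JT^*$ acts on finitely supported $x$ by $x^*(x)=\sum_t x^*(t)x(t)$, so $z_k(x)=z_k(P_{A_k}x)$. It suffices to work with finitely supported $x\in JT$ and finite sums $z=\sum_{k\le n}a_kz_k$.

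\emph{Upper estimate.} Fix a finitely supported $x$ with $\|x\|\le1$. Then
\[
|z(x)|\le \sum_k |a_k|\,\|z_k\|\,\|P_{A_k}x\|\le \sup_k\|z_k\|\Big(\sum_k a_k^2\Big)^{1/2}\Big(\sum_k\|P_{A_k}x\|^2\Big)^{1/2}.
\]
So I need the key inequality $\sum_k\|P_{A_k}x\|^2\le\|x\|^2$. For each $k$, choose a finite family $\mathcal P_k$ of disjoint segments almost attaining $\|P_{A_k}x\|$. Replace each $s\in\mathcal P_k$ by $s\cap A_k$; this is still a segment and gives the same value $s^*(P_{A_k}x)=(s\cap A_k)^*(x)$. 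Segments coming from different $k$ lie in the disjoint sets $A_k$, so the union of all the modified families is one family of pairwise disjoint segments. Its sum of squares on $x$ is $\sum_k\|P_{A_k}x\|_{\mathcal P_k}^2$, which by \eqref{JT-norm} is at most $\|x\|^2$. Hence $\|z\|\le \sup_k\|z_k\|\,(\sum a_k^2)^{1/2}$.

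\emph{Lower estimate.} Put $\delta=\inf_k\|z_k\|>0$. For each $k$, pick a finitely supported $y_k$ with $\|y_k\|\le1$ and $z_k(y_k)\ge\delta/2$, and set $x_k=P_{A_k}y_k$. Then $x_k$ is supported in $A_k$, $\|x_k\|\le1$, and $z_k(x_k)=z_k(y_k)$. Let $x=\sum_k a_kx_k$. A segment $s$ meets at most one $A_k$, so $s^*(x)=a_ks^*(x_k)$ for that $k$, or $s^*(x)=0$. Given disjoint segments $s_1,\dots,s_m$, I group them according to the $A_k$ they meet. This gives
\[
\sum_j s_j^*(x)^2=\sum_k a_k^2\sum_{s_j\text{ meets }A_k}s_j^*(x_k)^2\le\sum_k a_k^2,
\]
so $\|x\|\le(\sum a_k^2)^{1/2}$. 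Since $z_k(x_l)=0$ for $k\ne l$, we get $z(x)=\sum_k a_k^2z_k(x_k)\ge\frac\delta2\sum a_k^2$. Therefore $\|z\|\ge\frac\delta2(\sum a_k^2)^{1/2}$.

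Together the two estimates give equivalence of $(z_k)$ with the unit vector basis of $\ell_2$. The only delicate point is the upper-estimate inequality $\sum_k\|P_{A_k}x\|^2\le\|x\|^2$. This is exactly where both hypotheses are used: the segments must be cut down to $A_k$ and stay segments, and families coming from different supports must remain disjoint.
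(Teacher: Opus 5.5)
Your proof is correct and follows essentially the same route as the paper: both estimates come from duality against vectors supported in $\supp(z_k)$, namely the projections $P_{\supp(z_k)}x$ for the upper bound and norming vectors for the lower bound, together with the fact that the $JT$-norm is Pythagorean on such segment-separated supports. The only difference is cosmetic. The paper tests against $x=\sum_k\langle x_k,z_k\rangle x_k$ and obtains the exact identity $\|\sum_k z_k\|^2=\sum_k\|z_k\|^2$, whereas your test vector $x=\sum_k a_kx_k$ gives the lower constant $\delta/2$, which is enough for the equivalence.
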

\begin{proof}
Let $(x_k)$ in $JT$ with $\supp(x_k)\subset \supp(z_k)$ for each $k\in\N$. From the definition of $\|\cdot\|_{JT}$ it follows that $\|x_1+\cdots+x_m\|_{JT}^2= \|x_1\|_{JT}^2+ \cdots+\|x_m\|_{JT}^2$ for every $m\in\N$.

Let $n\in\N$ and let $\e>0$.
For each $k=1,\ldots,n$, we select $x_k\in JT$ such that $\|x_k\|_{JT}=1$, $\supp(x_k) \subset \supp(z_k)$ and $\|z_k\|^2\leq \langle x_k, z_k\rangle^2 +\e/n$.

If $x=\sum_{k=1}^n \langle x_k, z_k\rangle x_k$, then $\|x\|_{JT}^2= \sum_{k=1}^n \langle x_k, z_k\rangle^2$. Therefore
$$
\|x\|_{JT}^2\, \Big\|\sum_{l=1}^n z_l\Big\|^2 \geq  \Big|\Big\langle x, \sum_{l=1}^n z_l\Big\rangle\Big|^2 = \Big|\Big\langle \sum_{k=1}^n \langle x_k, z_k\rangle x_k, \sum_{l=1}^n z_l\Big\rangle\Big|^2.
$$

Since $\langle x_k,z_l\rangle=0$ for $k\neq l$, we get
$$
\|x\|_{JT}^2\, \Big\|\sum_{l=1}^n z_l\Big\|^2 \geq
\Big| \sum_{k=1}^n \langle x_k, z_k\rangle^2\Big|^2 \geq \|x\|_{JT}^2 \Big(\sum_{k=1}^n \|z_k\|^2-\e\Big).
$$
Hence $\|\sum_{k=1}^n z_k\|^2 \geq \sum_{k=1}^n \|z_k\|^2$.
\medskip

For the converse inequality, let $x\in JT$ and let us denote $x_k= P_{\supp(z_k)}x$, where $P_{\supp(z_k)}$ is the projection considered in Proposition \ref{P_A}. Then
\begin{eqnarray*}
\Big\langle x,\sum_{k=1}^n z_k\Big\rangle &=& \sum_{k=1}^n\langle x, z_k\rangle =
\sum_{k=1}^n\langle x_k, z_k\rangle \leq
\sum_{k=1}^n\|x_k\|_{JT}\, \|z_k\| \\
&\leq& \Big(\sum_{k=1}^n\|x_k\|_{JT}^2\Big)^{1/2} \Big(\sum_{k=1}^n\|z_k\|^2\Big)^{1/2}\leq \|x\|_{JT} \Big(\sum_{k=1}^n\|z_k\|^2\Big)^{1/2}.
\end{eqnarray*}
Hence $\|\sum_{k=1}^n z_k\|^2 \leq \sum_{k=1}^n \|z_k\|^2$, and the result is proved.
\end{proof}

\section{Complemented copies of $\ell_2$ and projectivity}\label{sect:l2}
In this section, we prove that every infinite-dimensional closed subspace of $\mathcal{B}, JT, JT^*$ contains a complemented subspace isomorphic to $\ell_2$. We use this result to establish the sub(super)projectivity for these spaces.

Given $0\neq x=(x_t)_{t\in \Tree}$ in $\B$ or $JT$, we denote $l(x)= \min\{|t| : t\in \supp(x)\}$ and  $u(x) = \sup\{|t| : t\in\supp(x) \}$.

\begin{definition} \label{definition1}
Let $X$ be a Banach space.
\begin{enumerate}
    \item The space $X$ is subprojective if every infinite-dimensional closed subspace $Y$ contains an infinite-dimensional closed subspace complemented in $X$.
    \item The space $X$ is superprojective if every closed subspace $Y$ of infinite codimension is contained into a subspace $Z$ of infinite codimension and complemented in $X$.
\end{enumerate}
\end{definition}

Subprojective and superprojective spaces were introduced in \cite{Whitley:64}. We refer to \cite{O-S:15} and \cite{G-P:16} for examples of these spaces.

\subsection{The spaces $\B$, $JT$ and $JT^*$ are subprojective}\label{sect:subp}

\begin{definition}
We say that a sequence $(x_k)$ of non-zero vectors in $\B$ or $JT$ is a \emph{level-block sequence} if $u(x_k)< l(x_{k+1})$ for each $k\in\N$.
\end{definition}

The following result is a direct consequence of the definition of the norm $\|\cdot\|_{JT}$.

\begin{lemma}\label{l-est}
Every level-block sequence $(x_k)$ in $JT$ admits a lower $\ell_2$-estimate. In fact, $\|x_1+\cdots+x_k\|^2_{JT}\geq \|x_1\|^2_{JT}+ \cdots+ \|x_k\|^2_{JT}$ for every $k\in\N$.
\end{lemma}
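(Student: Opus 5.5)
The plan is to produce, for each $x_j$, a family of pairwise disjoint segments $\mathcal{P}_j$ inside the level band $I_j=\{t\in\Tree: l(x_j)\le |t|\le u(x_j)\}$ that (nearly) attains $\|x_j\|_{JT}$. We then take the union of these families and use it as a single admissible family of disjoint segments for the sum $x_1+\cdots+x_k$.

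\textbf{Step 1: restricting an almost norming family to the band.} Fix $\e>0$ and $j\le k$. By the definition \eqref{JT-norm}, there are pairwise disjoint segments $s^j_1,\dots,s^j_{m_j}$ with
\[
\sum_i \Big(\sum_{t\in s^j_i} x_j(t)\Big)^2 \ge \|x_j\|_{JT}^2-\e .
\]
Replace each $s^j_i$ by $s^j_i\cap I_j$. The set $I_j$ is defined by a condition on height, and the nodes of a segment have consecutive heights, so the intersection of a segment with a height interval is again a segment (possibly empty). The truncation also leaves the inner sums unchanged, since $\supp(x_j)\subset I_j$. So we may assume every segment of $\mathcal{P}_j$ lies in $I_j$, and we discard any empty ones.

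\textbf{Step 2: the union is disjoint and sees one vector per segment.} The level-block condition $u(x_j)<l(x_{j+1})$ makes the bands $I_1,I_2,\dots,I_k$ pairwise disjoint. Hence the segments in $\mathcal{P}_1\cup\cdots\cup\mathcal{P}_k$ are pairwise disjoint, and this union is an admissible family in \eqref{JT-norm} for $x=x_1+\cdots+x_k$. For a segment $s\in\mathcal{P}_j$ we have $s\subset I_j$, and $x_l$ vanishes on $I_j$ for $l\ne j$. Therefore $\sum_{t\in s}x(t)=\sum_{t\in s}x_j(t)$.

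\textbf{Step 3: conclusion.} Combining the two steps,
\[
\|x_1+\cdots+x_k\|_{JT}^2 \ge \sum_{j=1}^k\sum_{s\in\mathcal{P}_j}\Big(\sum_{t\in s}x_j(t)\Big)^2 \ge \sum_{j=1}^k\|x_j\|_{JT}^2-k\e .
\]
Letting $\e\to0$ gives the stated inequality.

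If the $x_j$ are not finitely supported, the same argument applies with finite families. The definition takes a supremum over finitely many segments, and the segments may be infinite, but $s\cap I_j$ is still a segment even when $u(x_j)=\infty$; in that case only the last vector matters and the argument is unchanged. The lower $\ell_2$-estimate then holds with $D=1$.

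The only point requiring care is Step 1, namely that truncating a segment to a height band yields a segment without changing the relevant sums. This follows directly from the definition of segments.
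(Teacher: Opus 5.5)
Your proof is correct and follows the paper's route. For each $x_j$ you restrict a (near-)norming finite family of disjoint segments to the level band $\{t: l(x_j)\le |t|\le u(x_j)\}$ and use the union as an admissible family for $x_1+\cdots+x_k$. Where the paper takes a family attaining the supremum, you use an $\varepsilon$-approximation; this is a harmless and slightly more careful variation.
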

\begin{proof}
For every $k\in\N$, there exists a finite family of segments $G_k$ such that the supremum in Formula (\ref{JT-norm}) for $x_k$ is attained on this family. We can assume that for each node $t$ in the segments of $G_k$ we have $l(x_k)\leq |t|\leq u(x_k)$. Thus, considering $G_1\cup\cdots\cup G_k$ we obtain $\|x_1+\cdots+x_k\|^2_{JT}\geq \|x_1\|^2_{JT}+\cdots+ \|x_k\|^2_{JT}$.
\end{proof}

\begin{corollary}\label{u-est}
Every normalized level-block sequence $(v_k)$ in $\B$ admits an upper $\ell_2$-estimate.
\end{corollary}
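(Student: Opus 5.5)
This follows from Lemma \ref{l-est} by duality between $\B$ and $JT=\B^*$. Fix a normalized level-block sequence $(v_k)$ in $\B$, some $n\in\N$ and scalars $a_1,\dots,a_n$. We want to show
\[
\Big\|\sum_{k=1}^n a_k v_k\Big\|_{\B}\le \Big(\sum_{k=1}^n a_k^2\Big)^{1/2}.
\]
The general estimate $\|w_1+\cdots+w_n\|\le C(\sum\|w_k\|^2)^{1/2}$ for the unnormalized vectors $w_k=a_kv_k$ is exactly this inequality, so the constant will be $C=1$.

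\textbf{Dual norming.} Since $\B^*=JT$, we can choose $x\in B_{JT}$ with
\[
\Big\|\sum_k a_kv_k\Big\|_{\B}=\Big\langle x,\sum_k a_kv_k\Big\rangle,
\]
up to $\e$; by $w^*$-density we may take $x$ finitely supported. For each $k$, let $I_k$ be the set of nodes $t$ with $l(v_k)\le |t|\le u(v_k)$. These level bands are pairwise disjoint because $u(v_k)<l(v_{k+1})$.

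\textbf{Splitting and restricting.} Set $x_k=P_{I_k}x$. Any segment meets $I_k$ in a segment, since $I_k$ is a band of consecutive levels. Hence Proposition \ref{P_A} makes $P_{I_k}$ a norm-one projection, and $\langle x,v_k\rangle=\langle x_k,v_k\rangle$ because $\supp v_k\subset I_k$. Then
\[
\Big\langle x,\sum_k a_kv_k\Big\rangle=\sum_k a_k\langle x_k,v_k\rangle\le\sum_k |a_k|\,\|x_k\|_{JT}\le\Big(\sum_k a_k^2\Big)^{1/2}\Big(\sum_k\|x_k\|_{JT}^2\Big)^{1/2}.
\]

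\textbf{Main step.} It remains to show $\sum_k\|x_k\|_{JT}^2\le\|x\|_{JT}^2\le 1$. Drop the zero $x_k$'s. The remaining $(x_k)$ are supported in disjoint level bands in increasing order, so they form a level-block sequence. Lemma \ref{l-est} gives
\[
\sum_k\|x_k\|^2\le\Big\|\sum_k x_k\Big\|^2=\|P_{\bigcup_k I_k}x\|^2\le\|x\|^2,
\]
where the last step uses Proposition \ref{P_A} once more. To justify that application, note that $\bigcup_k I_k$ is a union of level bands, so a segment can meet it in several pieces rather than a single segment. So instead I bound directly: take optimal disjoint segments for each $x_k$, chosen inside $I_k$ as in the proof of Lemma \ref{l-est}. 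Their union is a disjoint family evaluating $x$ (since $x$ agrees with $x_k$ on $I_k$), and it gives $\sum_k\|x_k\|^2\le\|x\|^2$ directly.

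I expect this last point, making sure the restriction-and-union argument is valid, to be the only real obstacle. The rest is Cauchy–Schwarz.
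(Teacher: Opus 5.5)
Your proof is correct and follows the same route as the paper: norm $\sum_k a_kv_k$ by some $x\in B_{JT}$, cut $x$ into its pieces $x_k$ on the level bands of the $v_k$ via Proposition \ref{P_A}, apply Cauchy--Schwarz, and conclude with the lower $\ell_2$-estimate $\sum_k\|x_k\|^2\le\|x\|^2$ from Lemma \ref{l-est}. Your caution in the last step is justified: the paper simply invokes Proposition \ref{P_A} to assume $x=\sum_k x_k$, but restricting to a union of non-adjacent level bands need not be contractive. Your direct union-of-norming-segments argument, which is the proof of Lemma \ref{l-est} (or, equivalently, choosing bands that cover all levels), is what makes that step rigorous.
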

\begin{proof}
Let $(a_k)\subset \R$ such that $\sum_{i=1}^\infty a_i v_i\in\B$ and let $x\in JT$ be a norm-one vector norming $\sum_{i=1}^\infty a_i v_i$.
By Proposition \ref{P_A}, we can assume that there exists a level-block sequence $(x_k)$ in $JT$ such that $l(v_k)\leq l(x_k)$ and $u(x_k)\leq u(v_k)$ for each $k\in\N$ and $x=\sum_{k=1}^\infty x_k$. In particular, $\langle v_k,x_k\rangle= \langle v_k,x\rangle$ for each $k\in\N$. Then
\begin{eqnarray*}
\Big\|\sum_{i=1}^\infty a_i v_i\Big\|_\B &=&
\sum_{i=1}^\infty a_i \langle v_i,x\rangle=
\sum_{i=1}^\infty a_i \langle v_i,x_i\rangle\\
&\leq&
\Big(\sum_{i=1}^\infty |a_i|^2\Big)^{1/2} \Big(\sum_{i=1}^\infty |\langle v_i,x_i \rangle|^2\Big)^{1/2}\\
&\leq& \Big(\sum_{i=1}^\infty |a_i|^2\Big)^{1/2} \Big(\sum_{i=1}^\infty
\| x_i\|_{JT}^2\Big)^{1/2}\\
&\leq& \Big(\sum_{i=1}^\infty |a_i|^2\Big)^{1/2} \|x\|_{JT} = \Big(\sum_{i=1}^\infty |a_i|^2\Big)^{1/2},
\end{eqnarray*}
where the last inequality is a consequence of Lemma \ref{l-est}.
\end{proof}

\begin{proposition}\label{l2inB}
Every normalized level-block sequence $(v_k)$ in $\B$ has a subsequence equivalent to the unit vector basis of $\ell_2$.
\end{proposition}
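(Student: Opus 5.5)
We already have the upper $\ell_2$-estimate from Corollary \ref{u-est}, so it remains to find a subsequence with a lower $\ell_2$-estimate. The plan is to dualize: norm each $v_k$ by a vector of $JT$ living on the same levels, arrange these vectors to be disjointly supported along segments, and then apply Proposition \ref{F-G}-type reasoning (or directly Lemma \ref{l-est}) together with a weak-nullness argument to control cross terms.

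\textbf{Norming vectors.} For each $k$ I choose $x_k\in JT$ with $\|x_k\|_{JT}=1$ and $\langle v_k,x_k\rangle \ge 1/2$. By Proposition \ref{P_A} applied to the set of nodes with level in $[l(v_k),u(v_k)]$, I may assume $x_k$ is supported in those levels. Then $(x_k)$ is a normalized level-block sequence in $JT$, and hence a block sequence.

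\textbf{Making the $x_k$ weakly null.} By Proposition \ref{P1}, either some subsequence of $(x_k)$ is weakly null, or some subsequence admits a non-zero $\ell_2$-vector. In the second case I replace $x_k$ by the differences $y_j=x_{k_{2j+1}}-x_{k_{2j}}$, as in the proof of Theorem \ref{TH_JT}. Then $\sigma^*(y_j)\to0$ for all $\sigma$, so by Theorem \ref{A-I} the $y_j$ are weakly null.

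This step is the main obstacle, because it loses the norming property. I would handle it differently: keep the $x_k$ but pass to a subsequence so that $\lim_k\langle v_m,x_k\rangle$ exists for each $m$. Since $v_m$ has finite level-support and the $x_k$ for large $k$ live on higher levels, $\langle v_m,x_k\rangle=0$ eventually. The cross terms are therefore harmless by level-disjointness, and no weak-nullness is actually needed.

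\textbf{Lower estimate.} For scalars $(a_k)$, set $x=\sum_k a_k x_k$ over a finite range, a level-block sum. Because the $x_k$ and $v_j$ occupy disjoint level ranges for $j\ne k$, we have $\langle v_j,x_k\rangle=0$ whenever $j\ne k$. This gives
\[
\Big\|\sum a_k v_k\Big\|_{\B}\,\|x\|_{JT} \ge \sum a_k^2\langle v_k,x_k\rangle \ge \tfrac12\sum a_k^2 .
\]
It then remains to bound $\|x\|_{JT}\le C(\sum a_k^2)^{1/2}$, i.e.\ an upper $\ell_2$-estimate for a subsequence of $(x_k)$.

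This last bound is exactly where Amemiya--Ito enters. If $(x_k)$ has a weakly null subsequence, Theorem \ref{A-I} gives an $\ell_2$ subsequence and we are done. Otherwise I would modify the norming vectors using Proposition \ref{P1}. One can cut each $x_k$ along the countably many branches carrying the $\ell_2$-vector, removing a small initial portion of each branch segment within the level range via Proposition \ref{P_A}. The aim is to make $\sigma^*(x_k')\to0$ for all $\sigma$ while keeping $\langle v_k,x_k'\rangle$ bounded below. Alternatively, I would pick $x_k$ from the norming set $W$ of Lemma \ref{L3} built from short segments, so that branch functionals tend to zero, and then apply Theorem \ref{A-I}. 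Verifying that the norming lower bound survives this modification is the delicate point.
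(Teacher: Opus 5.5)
Your proposal has a genuine gap. The case in which the norming vectors $(x_k)$ have no weakly null subsequence is left open (you call it ``the delicate point''), and your reason for discarding the difference trick is wrong. Each $x_k$ lives in the level band $[l(v_k),u(v_k)]$, and these bands are pairwise disjoint, so $\langle v_m,x_k\rangle=0$ for $m\neq k$. Hence for $y_j=x_{k_{2j+1}}-x_{k_{2j}}$ you get
\[
\langle v_{k_{2i+1}},y_j\rangle=\delta_{i,j}\,\langle v_{k_{2j+1}},x_{k_{2j+1}}\rangle\ge \tfrac12\,\delta_{i,j}.
\]
So nothing is lost: the differences are still biorthogonal, up to a factor, to the subsequence $(v_{k_{2j+1}})_j$. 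They form a block sequence with $\tfrac12\le\langle v_{k_{2j+1}},y_j\rangle\le\|y_j\|\le 2$ and $\sigma^*(y_j)\to0$ for every branch. Theorem \ref{A-I} then yields a subsequence $(y_{j_m})_m$ equivalent to the $\ell_2$ basis. Your own duality estimate, applied to $\sum_m a_m v_{k_{2j_m+1}}$ and $\sum_m a_m y_{j_m}$, together with Corollary \ref{u-est}, completes the proof. This is exactly the paper's argument. There $(x_k)$ is made weakly Cauchy using that $JT$ contains no copy of $\ell_1$, one sets $y_k=x_{2k}-x_{2k-1}$, and the lower estimate is phrased as surjectivity of $T^*$ for $T(a_i)=\sum_i a_i v_{2i}$.

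The alternatives you sketch cannot work. Level-disjointness gives biorthogonality but not the upper estimate $\|\sum a_kx_k\|\le C(\sum a_k^2)^{1/2}$. So, contrary to your remark, you genuinely need $\sigma^*(x_k)\to0$ for all branches. Moreover, this cannot be obtained by modifying $x_k$ inside its band. Let $v_k=s_k^*$, where $s_k$ is the part of a fixed branch $\sigma$ lying in the $k$-th of a sequence of consecutive level bands. Every $x$ supported in the band of $v_k$ satisfies $\sigma^*(x)=\langle v_k,x\rangle$. Hence keeping $\langle v_k,x_k'\rangle\ge c>0$ forces $\sigma^*(x_k')\ge c$, whatever cutting or other in-band modification you make. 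Also, the natural norming vectors $x_k=e_{t_k}$ with $t_k\in s_k$ satisfy $\|\sum_{k\in F}x_k\|\ge\sigma^*(\sum_{k\in F}x_k)=|F|$, so no subsequence of them has an upper $\ell_2$-estimate. The cure has to leave the band of $v_k$, which is precisely what subtracting the neighbouring vector $x_{k_{2j}}$ does.
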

\begin{proof}
By Proposition \ref{P_A}, for every $k\in\N$ we can choose a norm-one vector $x_k\in JT$ such that $l(v_k)\leq l(x_k)$, $u(x_k)\leq u(v_k)$ and $\langle v_k,x_k\rangle=1$. Since $JT$ contains no copies of $\ell_1$, passing to a subsequence we can assume that $(x_k)$ is weakly Cauchy, hence $y_k = x_{2k}-x_{2k-1}$ is weakly null. By Theorem \ref{A-I}, passing to a subsequence we can assume that $(y_k)$ is equivalent to the unit vector basis of $\ell_2$.

Note that $\langle y_j,v_{2k}\rangle= \delta_{j,k}$. By Corollary \ref{u-est}, $(v_k)$ admits an upper $\ell_2$-estimate. Thus $T(a_i)=\sum_{i=1}^\infty a_i v_{2i}$ defines a continuous operator $T:\ell_2 \to \B$ and the conjugate operator $T^*:\B^*\to \ell_2$ satisfies
$$
\langle e_k,T^* y_l\rangle= \langle Te_k, y_l \rangle= \langle v_{2k},y_l\rangle= \delta_{k,l},
$$
where $(e_k)$ is the unit vector basis of $\ell_2$. Thus $T^* y_l=e_l$. Since $(y_l)$ is equivalent to $(e_l)$, $T^*$ is surjective. Therefore $T$ is an isomorphism (into), and hence $(v_{2i})$ is equivalent to the unit vector basis of $\ell_2$.
\end{proof}

The next result shows that the space $\B$ is subprojective.

\begin{theorem}\label{B-subp}
Every closed infinite-dimensional subspace $M$ of $\B$ contains a copy of $\ell_2$ complemented in $\B$.
\end{theorem}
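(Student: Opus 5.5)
Given an infinite-dimensional closed subspace $M$ of $\B$, first perturb into level-block form. Since $\B$ has the basis $(e^*_k)$, the sets $\{x\in\B : \supp(x)\cap\{t: |t|\le n\}=\emptyset\}$ are the kernels of finitely many coordinate functionals. Hence $M$ intersects each of them in a subspace of finite codimension in $M$. By the standard gliding hump argument we choose normalized vectors $m_k\in M$ and a normalized level-block sequence $(v_k)$ in $\B$ with $\sum_k\|m_k-v_k\|$ as small as we like. (Note that we need truncation by levels, not just by the enumeration of $\Tree$.) Concretely, we pick $m_k$ vanishing on levels $\le u(v_{k-1})$ and then cut $m_k$ at a large level. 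The truncation $\{t: |t|\le N\}$ is a set whose intersection with every segment is a segment, so by Proposition \ref{P_A} (dualized) the corresponding projection on $\B$ has norm at most one. Tail smallness of $m_k$ beyond level $N$ follows from $\B$ having the basis $(e^*_k)$ and each level being finite.

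By Proposition \ref{l2inB}, after passing to a subsequence, $(v_k)$ is equivalent to the $\ell_2$ basis. The main step is to produce a bounded projection onto $[v_k]$ (or onto a further subsequence). Here we reuse the proof of Proposition \ref{l2inB}. For each $k$ take norm-one $x_k\in JT$ with $l(v_k)\le l(x_k)$, $u(x_k)\le u(v_k)$ and $\langle v_k,x_k\rangle=1$. Pass to a subsequence with $(x_k)$ weakly Cauchy, and set $y_k=x_{2k}-x_{2k-1}$, which we take equivalent to the $\ell_2$ basis by Theorem \ref{A-I}. Then $\langle v_{2i},y_j\rangle=\delta_{ij}$, since the supports are level-disjoint except for $x_{2j}$ against $v_{2j}$. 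Viewing $y_j\in JT=\B^*$ as functionals on $\B$, define
\[
P(z)=\sum_{j}\langle z,y_j\rangle v_{2j},\qquad z\in\B .
\]
This is well defined and bounded. The map $z\mapsto(\langle z,y_j\rangle)_j$ is the adjoint restriction of the operator $\ell_2\to JT$, $(a_j)\mapsto\sum a_jy_j$, which is bounded because $(y_j)$ has an upper $\ell_2$-estimate. Therefore $(\langle z,y_j\rangle)_j\in\ell_2$ with norm $\lesssim\|z\|$. Moreover, $(v_{2j})$ has an upper $\ell_2$-estimate by Corollary \ref{u-est}. Biorthogonality gives $P^2=P$ with range $[v_{2j}]\simeq\ell_2$. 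The one subtle point is that $\sum_j\langle z,y_j\rangle y_j$-type convergence needs $z\mapsto(\langle z, y_j\rangle)$ to land in $\ell_2$, which is exactly the duality just described.

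Finally, transfer back to $M$ by the small-perturbation principle. If $\sum_j\|m_{2j}-v_{2j}\|\,\|y_j\|$ is chosen small compared to $\|P\|^{-1}$, which we can arrange in advance since $\|y_j\|\le 2$, then $[m_{2j}]\subset M$ is isomorphic to $[v_{2j}]\simeq\ell_2$ and is complemented in $\B$. Indeed, the operator $I-P+\sum_j\langle\cdot,y_j\rangle m_{2j}$ is close to $I$, hence an automorphism carrying $[v_{2j}]$ onto $[m_{2j}]$. I expect the main obstacle to be arranging the perturbation quantitatively. The constants in Proposition \ref{l2inB} (the equivalence constant of $(y_j)$) are only known after passing to subsequences. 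To handle this, I would choose the $m_k$ first with $\|m_k-v_k\|\le\e_k$ for a fixed summable sequence and note that passing to subsequences only decreases $\sum\e_k$. Alternatively, one could pass to a tail of the final subsequence so that the perturbation sum falls below $(2\|P\|)^{-1}$, with $\|P\|$ uniform over tails.
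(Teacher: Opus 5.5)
Your proposal is correct and follows essentially the paper's argument: perturb $M$ to a normalized level-block sequence, use Proposition \ref{l2inB} and Corollary \ref{u-est} for the $\ell_2$ structure, take weakly Cauchy functionals in $JT$, apply Theorem \ref{A-I} to their differences $y_j$, and project via $z\mapsto\sum_j\langle z,y_j\rangle v_{2j}$. The one difference is where the functionals come from. The paper uses (Hahn--Banach extensions of) the coefficient functionals of the sequence lying in $M$, so its projection lands directly in $M$ with no second perturbation. You instead build the functionals from level-supported norming vectors of the block approximants and transfer back via $I+K$; this only requires $\sum_j\|m_{2j}-v_{2j}\|\,\|y_j\|<1$, which is independent of $\|P\|$, so the quantitative obstacle you anticipate does not arise.
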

\begin{proof}
A standard perturbation argument shows that given a sequence of positive numbers $(\e_k)$, there are a normalized level-block sequence $(u_k)$
in $\B$ and a sequence $(v_k)$ in $M$ such that $\|u_k-v_k\|<\e_k$ for each $k\in \N$. By Proposition \ref{l2inB}, passing to a subsequence
we can assume that $(u_k)$ is equivalent to the unit vector basis of $\ell_2$. Hence so is $(v_k)$.
Moreover, we can assume that the sequence $(x_k)\subset JT$ of coefficient functionals of $(v_k)$ is weakly Cauchy.
Then $(x_{2k}- x_{2k-1})$ is weakly null, and by Theorem \ref{A-I}, passing to a subsequence) we can assume that $(x_{2k}- x_{2k-1})$
is equivalent to the unit vector basis of $\ell_2$.

Observe that $Pu = \sum_{k=1}^\infty \langle u,(x_{2k}- x_{2k-1})\rangle v_{2k}$ defines a projection $P:\B\to\B$ onto $[v_{2k}]\subset M$.
Indeed, the map $P$ is well-defined because $(\langle u,(x_{2k}- x_{2k-1})\rangle)\in\ell_2$ for each $u\in [v_k]^*$,
and clearly $P^2=P$ and $P(\B)=[v_{2k}]$.
In particular, the subspace $[v_{2k}]$ is isomorphic to $\ell_2$ and complemented in $\B$.
\end{proof}


A version of the following result was stated without proof by Johnson and Rosenthal in \cite[Remark III.1]{J-R}.

\begin{theorem}\label{G-MA} \emph{(\cite[Theorem 1]{G-MA})}
If $(x_n^*)$ is a seminormalized sequence in the dual $X^*$ of a Banach space $X$ and $0$ is a weak$^*$-cluster point of $\{x^*_n : n \in \N\}$,
then there exist a basic subsequence $(y^*_n)$ of $(x^*_n)$ and a bounded sequence $(y_n)$ in $X$ such that $\la y_i, y^*_j\ra = \delta_{i,j}$.
\end{theorem}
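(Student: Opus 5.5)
We prove Theorem \ref{G-MA} by building the pairs inductively, in the spirit of the Bessaga--Pe{\l}czy\'nski selection principle. Write $0<c\le\|x_n^*\|\le C$.

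\textbf{Initial extraction.} Since $0$ is a weak$^*$-cluster point of $\{x_n^*\}$ and $x_n^*\neq 0$, the point $0$ is not isolated. So for any finite set $F\subset X$ and $\delta>0$ there are infinitely many $n$ with $|x_n^*(x)|<\delta$ for all $x\in F$. Throughout, we pass only to subsequences and never assume metrizability of the weak$^*$ topology.

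\textbf{Inductive step.} Suppose $y_1^*=x_{n_1}^*,\dots,y_k^*=x_{n_k}^*$ and vectors $y_1,\dots,y_k$ have been chosen, with
\[
\la y_i,y_j^*\ra=\delta_{i,j}, \qquad \|y_i\|\le M,
\]
where $M$ is a constant to be fixed. Fix a finite $\e_k$-net $\{z^*_1,\dots,z^*_m\}$ of the unit sphere of $E_k=\operatorname{span}\{y_1^*,\dots,y_k^*\}$, and choose norming vectors $w_1,\dots,w_m\in B_X$ with $z^*_l(w_l)>1-\e_k$. Set $F_k=\{w_1,\dots,w_m,y_1,\dots,y_k\}$. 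Pick $n_{k+1}>n_k$ with $|x^*_{n_{k+1}}(x)|<\delta_k$ for all $x\in F_k$. This choice gives two things.

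First, the usual Mazur argument shows $\|e^*+\lambda y_{k+1}^*\|\ge(1-\eta_k)\|e^*\|$ for $e^*\in E_k$. This holds because the $w_l$ almost norm $E_k$ while $y_{k+1}^*$ is nearly $0$ on them. With $\sum\eta_k<\infty$, the sequence $(y_k^*)$ is basic with constant at most $K=\prod(1-\eta_k)^{-1}$.

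Second, $y^*_{k+1}(y_i)$ is small for $i\le k$.

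\textbf{Constructing $y_{k+1}$.} This is the main step and the main obstacle: we need a point $y_{k+1}$ with
\[
y^*_{k+1}(y_{k+1})=1,\qquad y_i^*(y_{k+1})=0\ (i\le k),
\]
with a uniform norm bound. The plan is to use Hahn--Banach via quotients. Consider
\[
\operatorname{dist}\bigl(y_{k+1}^*,\,E_k\bigr)\ \ge\ \|y_{k+1}^*\|/(2K)\ \ge\ c/(2K),
\]
which follows from the basis constant estimate above. By duality,
\[
\operatorname{dist}\bigl(y^*_{k+1},E_k\bigr)=\|y^*_{k+1}|_{(E_k)_\perp}\|,
\]
since $E_k$ is finite-dimensional and hence weak$^*$-closed. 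So there is $u\in(E_k)_\perp$ with $\|u\|\le 1$ and $y^*_{k+1}(u)\ge c/(4K)$. Set $y_{k+1}=u/y^*_{k+1}(u)$. Then $y_{k+1}$ is annihilated by $y_1^*,\dots,y_k^*$, satisfies $y^*_{k+1}(y_{k+1})=1$, and has $\|y_{k+1}\|\le 4K/c=:M$.

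\textbf{Correcting the earlier vectors.} The relations $y^*_{k+1}(y_i)=0$ for $i\le k$ are only approximate: they hold with error $\delta_k M$. Rather than fix them now, modify all the vectors at the end by a small perturbation. Let $(P_k)$ be the partial sum projections on $[y_k^*]$, and replace
\[
y_i \quad\text{by}\quad y_i-\sum_{j>i}y_j^*(y_i)\,\tilde y_j .
\]
Alternatively, and more simply, choose $\delta_k$ so small that the matrix $\bigl(\la y_i,y_j^*\ra\bigr)$ is unit upper-triangular. Its entries are $\delta_{ij}$ on and above the diagonal, and they are tiny and summable below the diagonal ($j>i$). Such a matrix is a bounded perturbation of the identity with bounded inverse on $\ell_\infty$-type sequences. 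Inverting it and taking the corresponding combinations of the $y_i$ gives a bounded biorthogonal sequence.

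The delicate point is the bookkeeping of the $\delta_k$ against the already fixed bound $M$. Since $M=4K/c$ depends only on $K$, $c$ and the chosen $\eta_k$, and not on $\delta_k$, the quantities $\delta_k$ can be chosen afterwards, for instance $\delta_k<2^{-k}/(M+1)$. This makes the triangular correction converge with norm at most $2M$.
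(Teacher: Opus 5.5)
The paper does not prove this statement. It quotes it from Gonz\'alez--Mart\'{\i}nez-Abej\'on, with a pointer to Johnson--Rosenthal, so there is no in-paper argument to compare against. Your proposal is a correct, self-contained proof of it.

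All three ingredients work:
\begin{itemize}
\item The hypothesis is used only to ensure that every basic weak$^*$ neighbourhood $\{x^*:\ |x^*(x)|<\delta_k,\ x\in F_k\}$ of $0$ contains $x_n^*$ for infinitely many $n$. This holds because $0$ lies in the weak$^*$ closure of $\{x_n^*\}$ but not in the set, and the weak$^*$ topology is Hausdorff. That is exactly why neither weak$^*$ convergence nor metrizability is needed.
\item Mazur's selection then makes $(y_k^*)$ basic.
\item For the weak$^*$-closed space $E_k$ you have $\operatorname{dist}(y^*_{k+1},E_k)=\|y^*_{k+1}|_{(E_k)_\perp}\|$. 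Combined with $\|y^*_{k+1}\|\le(1+K)\|y^*_{k+1}-e^*\|$ for $e^*\in E_k$, this gives $y_{k+1}\in(E_k)_\perp$ with $y^*_{k+1}(y_{k+1})=1$ and $\|y_{k+1}\|\le 4K/c$.
\end{itemize}
Your route gives, beyond the citation, a clear view of where the cluster-point hypothesis enters and explicit constants; the citation gives brevity.

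A few things to tighten.

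First, fix the $\eta_k$ before starting (say $\eta_k=2^{-k-1}$), so that $K$ and $M=4K/c$ are known in advance. Then at step $k$ choose $\e_k,\delta_k$ with $2\e_k+2\delta_k/c\le\eta_k$ and $\delta_k<2^{-k}/(M+1)$. As written, the order of choices looks circular.

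Second, discard the first correction formula. It defines $\tilde y_i$ through $\tilde y_j$ for all $j>i$, which is an upward recursion that is not well founded.

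Third, in the matrix version your labels ``above/below'' are swapped. The relation $\la y_i,y_j^*\ra=\delta_{ij}$ holds exactly for $j\le i$, and $|\la y_i,y_j^*\ra|<\delta_{j-1}$ for $j>i$. So $A=I+N$ with $\sup_i\sum_j|N_{ij}|\le\sum_k\delta_k<1$. The Neumann series gives $B=A^{-1}$ with uniformly $\ell_1$-bounded rows, and $\tilde y_i=\sum_l b_{il}y_l$ converges absolutely with $\la\tilde y_i,y_j^*\ra=(BA)_{ij}=\delta_{ij}$.

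Simplest of all is to correct as you go. After choosing $y^*_{k+1}$ and $y_{k+1}$, with the current corrected $y_i$ placed in $F_k$, replace $y_i$ by $y_i-y^*_{k+1}(y_i)\,y_{k+1}$ for $i\le k$. Because $y_{k+1}\in(E_k)_\perp$, this restores exact biorthogonality among the first $k+1$ pairs, and each vector moves by less than $\delta_k M$. The limit vectors are then biorthogonal to $(y_j^*)$ with norm at most $M(1+\sum_k\delta_k)\le 2M$. This is precisely the bound your choice of $\delta_k$ was designed to give.
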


Using this fact, we show that $JT$ and $JT^*$ are subprojective.

\begin{theorem}\label{JT-subp}
(1) Every closed infinite-dimensional subspace $M$ of $JT$ contains a copy of $\ell_2$ complemented in $JT$.

(2) Every closed infinite-dimensional subspace $N$ of $JT^*$ contains a copy of $\ell_2$ complemented in $JT^*$.
\end{theorem}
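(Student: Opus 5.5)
For part (1) I will first find a seminormalized weakly null sequence in $M$ that is close to a block sequence, and then build the projection from a biorthogonal system supplied by Theorem \ref{G-MA}.

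\emph{Part (1).} Since $JT$ contains no copy of $\ell_1$ (Theorem \ref{TH_JT}), Rosenthal's theorem applied to a normalized basic sequence in $M$ gives a weakly Cauchy subsequence. Taking differences of consecutive terms yields a seminormalized weakly null sequence $(w_k)\subset M$. A gliding hump perturbation with small $\e_k$ produces a block sequence $(u_k)$ of $(e_n)$ with $\|u_k-w_k\|<\e_k$. Since $\sigma^*(u_k)\to 0$ for every branch $\sigma$, Theorem \ref{A-I} lets us pass to a subsequence with $(u_k)$, and hence $(w_k)$, equivalent to the unit vector basis of $\ell_2$.

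For the complementation, view $JT=\B^*$. Because $(w_k)$ is weakly null, $0$ is a weak$^*$-cluster point of $\{w_k\}$. Theorem \ref{G-MA} then gives a subsequence, still denoted $(w_k)$, and a bounded sequence $(b_k)\subset\B$ with $\la b_i,w_j\ra=\delta_{ij}$. I would like to define
\[
Px=\sum_k \la b_k,x\ra\, w_k .
\]
This requires the map $x\mapsto(\la b_k,x\ra)_k$ to be bounded into $\ell_2$, which amounts to $(b_k)$ having an upper $\ell_2$-estimate in $\B$. To arrange this, I use that $(w_k)$ is close to blocks: cut each $b_k$ to the interval of levels $[l(u_k),u(u_k)]$, after first replacing the $u_k$ by level-blocks on a further subsequence. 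The truncation keeps the pairing with $w_k$ close to $1$. The truncated $b_k$ are then level-block vectors, and Corollary \ref{u-est} gives the upper $\ell_2$-estimate. A standard small perturbation then restores exact biorthogonality, so $P$ is a bounded projection onto $[w_k]\simeq\ell_2$, and $[w_k]\subset M$. I expect the main obstacle to be this reduction to level-blocks. Blocks with respect to the enumeration order are not automatically level-blocks, so I would pass to subsequences whose supports occupy disjoint level ranges. This is possible because each block has finite support, so each block occupies only finitely many levels.

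\emph{Part (2).} Let $N\subset JT^*$ be infinite-dimensional. I split into two cases according to $V(N)$, where $V$ is the quotient map \eqref{quotient map notation}.

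If $V|_N$ is not an isomorphism on any finite-codimensional subspace of $N$, then $N$ contains a normalized sequence almost in $\B$. By a perturbation I obtain a subspace of $N$ isomorphic to a subspace of $\B$ spanned by level-blocks $(v_k)$. As in Theorem \ref{B-subp}, there are coefficient functionals $y_k=x_{2k}-x_{2k-1}\in JT$ equivalent to the $\ell_2$ basis. The formula $Px^*=\sum\la x^*,y_k\ra v_{2k}$ is then defined on all of $JT^*$ and is bounded, because $(y_k)$ is weakly null with an $\ell_2$ upper estimate. This gives a complemented $\ell_2$ in $JT^*$.

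Otherwise $V$ is an isomorphism on some infinite-dimensional $N_0\subset N$. In that case choose a separable subspace $N_1\subset N_0$, so that $V(N_1)$ is a separable Hilbert subspace of $\ell_2(\Gamma^*)$. Proposition \ref{P4}, or the construction in its proof, gives a projection of $JT^*$ onto a copy of $\ell_2$, namely the span of $\sigma_{i,>n_i}^*$. Composing with $V$, we get that $V$ restricted to $N_1$, followed by the orthogonal projection in $\ell_2(\Gamma^*)$ and the isometric lift from Theorem \ref{P2}(b), yields a bounded projection $Q$ onto $N_1$, which is isomorphic to $\ell_2$. Concretely, the projection is $Q=(V|_{N_1})^{-1}\circ \pi\circ V$, with $\pi$ the orthogonal projection onto $V(N_1)$.

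Since $Q$ is defined this way, the second case is in fact straightforward. The delicate part of (2) is the first case, which needs the perturbation into $\B$.
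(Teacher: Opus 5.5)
Your proposal is correct and follows the paper's proof in both parts. In (1) you use the same chain: weakly Cauchy differences, Theorem \ref{A-I}, Theorem \ref{G-MA}, and the projection $x\mapsto\sum_k\la b_k,x\ra w_k$, which is the paper's $P^*$. Your level-range truncation with Corollary \ref{u-est} and a small perturbation simply spells out the upper $\ell_2$-estimate for the functionals, which the paper obtains ``by passing to a subsequence''. In (2) you use the same split according to $V|_N$, and your $Q=(V|_{N_1})^{-1}\pi V$ is exactly the paper's decomposition $JT^*=N_1\oplus V^{-1}(V(N_1)^\perp)$; in the first case, state that the vectors in $N$ are a small perturbation of $(v_{2k})$, since mere isomorphism would not transfer complementation (the paper gets this from an automorphism $U$ of $JT^*$ with $U(N_1)\subset\B$).
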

\begin{proof} 
(1) We proceed as in the proof of Theorem \ref{JT-subp}(1).
Given a seminormalized basic sequence $(x_k)$ in $JT$, passing to a subsequence we can assume that it is weakly Cauchy.
Thus $(y_k)= (x_{2k}-x_{2k-1})$ is a seminormalized weakly null sequence.
By Theorem \ref{G-MA}, we can find a subsequence of $(y_k)$ whose coefficient functionals $(v_k)$ can be chosen in the predual space $\B$.
By passing to a subsequence, we can assume that both sequences $(v_k)$ and $(y_k)$ are equivalent
to the unit vector basis of $\ell_2$.
Thus $Pu = \sum_{k=1}^\infty \langle u,y_k\rangle v_k$ defines a projection $P:\B\to\B$ onto $[v_k]$, and the conjugate operator
$P^*x= \sum_{k=1}^\infty \langle v_k,x\rangle y_k$ is a projection on $JT$ with range $[y_k]$. Note that $P^*$ is weak$^*$ continuous.

(2) Let $V:JT^*\to\ell_2(\Gamma)$ denote the quotient map from \eqref{quotient map notation}, with kernel $\B$, where
the quotient is identified isometrically with $\ell_2(\Gamma)$ via Theorem \ref{P2}. We consider two cases:

(i) If the restriction $V|_N$ is strictly singular, then a perturbation argument shows that there exists an automorphism $U$ on $JT^*$ and a closed infinite-dimensional subspace $N_1$ of $N$ such that $U(N_1)\subset \ker V =\B$. By Theorem \ref{B-subp}, $U(N_1)$ contains a subspace $N_2$ isomorphic to $\ell_2$ and complemented in $\B$. Since $N_2$ is reflexive, if $P$ is a projection on $\B$ onto $N_2$, then $P^{**}$ is a projection on $\B^{**}= JT^*$ with range $N_2$. Thus $U^{-1}(N_2)$ is a subspace isomorphic to $\ell_2$, contained in $N$ and complemented in $JT^*$.

(ii) If $V|_N$ is not strictly singular, then there exists a closed, separable, infinite-dimensional subspace $N_1$ of $N$ such that $V|_{N_1}$
is an isomorphism. Then $N_1\simeq \ell_2$ and $V(N_1)$ is complemented in $\ell_2(\Gamma)$, hence $N_1$ is complemented in $JT^*$.
Indeed, $JT^* = N_1\oplus V^{-1}(V(N_1)^\perp)$.
\end{proof}

Let us observe, for later use, that the following holds.

\begin{lemma}\label{compl.JT} Let $(x_n)_n$ be a level block sequence in $JT$ equivalent to the basis of $\ell_2$. Then the space $[(x_n)_n]$ is complemented in $JT$.
\end{lemma}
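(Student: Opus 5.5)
Let $(x_n)_n$ be a level-block sequence in $JT$ that is equivalent to the unit vector basis of $\ell_2$. The plan is to build biorthogonal functionals $(v_n)_n$ in $\B$ that are themselves level-block and normalized, and then to define the projection exactly as in the proof of Theorem~\ref{JT-subp}(1). Write $l_n=l(x_n)$ and $u_n=u(x_n)$, so that $u_n<l_{n+1}$.

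\textbf{Choice of the functionals.} For each $n$, pick a norm-one functional $w_n\in\B$ with $\la w_n, x_n\ra=\|x_n\|$. The functionals $e^*_k$ span a dense subspace of $\B$, and $x_n$ is finitely supported (we may perturb to this case). So we may take $w_n$ finitely supported in $\B$ and $\la w_n,x_n\ra\ge \|x_n\|/2$. Next, restrict $w_n$ to the nodes $t$ with $l_n\le |t|\le u_n$. The set $A_n=\{t: l_n\le|t|\le u_n\}$ meets every segment in a segment, so Proposition~\ref{P_A} gives a norm-one projection $P_{A_n}$ on $JT$. Its adjoint restricted to $\B$ is also a norm-one projection, and it maps $w_n$ to a functional $v_n'$ supported in $A_n$ with $\|v_n'\|\le 1$ and $\la v_n',x_n\ra=\la w_n,x_n\ra$. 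Set $v_n=v_n'/\la v_n',x_n\ra$. Then $\|v_n\|\le 2/\inf_k\|x_k\|$, and the sequence $(v_n)$ is level-block with the same level intervals as $(x_n)$. Because the supports lie in disjoint level bands, $\la v_m,x_n\ra=\delta_{m,n}$.

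\textbf{Upper estimate and the projection.} The sequence $(v_n)$ is seminormalized. Its lower bound is $\|v_n\|\ge 1/\|x_n\|$, and the upper bound was obtained above. By Corollary~\ref{u-est}, applied after normalizing, $(v_n)$ admits an upper $\ell_2$-estimate. Therefore, for every $x\in JT$,
\[
\Big(\sum_n \la v_n,x\ra^2\Big)^{1/2}\le C\|x\|_{JT}.
\]
This is just the dual statement: the map $T:\ell_2\to\B$ with $T(a)=\sum a_nv_n$ is bounded, so $T^*:JT\to\ell_2$, $x\mapsto(\la v_n,x\ra)_n$, is bounded. Since $(x_n)$ is equivalent to the $\ell_2$ basis, the operator
\[
Px=\sum_n \la v_n,x\ra x_n
\]
is bounded on $JT$ with $\|P\|\le C\,\|T^*\|$, where $C$ is the equivalence constant. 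Biorthogonality gives $Px_m=x_m$, so $P$ is a projection onto $[(x_n)_n]$.

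\textbf{Main obstacle.} The key step is ensuring that the $v_n$ can be taken level-block, supported in the same level bands as the $x_n$, while staying bounded. Proposition~\ref{P_A} handles this, via the duality between the level-band projection on $JT$ and its restriction to $\B$. One must check that the adjoint of $P_{A_n}$ preserves $\B$; it does, because it maps each $e_k^*$ to $e_k^*$ or to $0$.
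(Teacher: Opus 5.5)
Your proof is correct and follows the paper's argument. The paper also chooses uniformly bounded level-block functionals biorthogonal to $(x_n)$, invokes the upper $\ell_2$-estimate of Corollary~\ref{u-est} (via the proof of Proposition~\ref{l2inB}), and sets $Px=\sum_n\langle v_n,x\rangle x_n$. You additionally make explicit the construction of the functionals through the level-band projections $P_{A_n}$, which the paper leaves implicit, and you rightly observe that only the upper estimate is needed.
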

\begin{proof} For every $n\in \N$ select $x_n^*$ such that $\|x_n^*\|_{JT^*} \le C$ , $x_n^*(x_m )= \delta _{n,m} $ and $(x_n^*) _n$ is a level block sequence. The proof of Theorem \ref{l2inB} yields that   $(x_n^*) _n$  is equivalent to the basis of $\ell_2$. Hence $P(x) = \sum_{n}x_n^*(x)x_n$ defines a projection onto $[(x_n)_n]$.
\end{proof}

\begin{corollary}\label{higher-d}
All successive dual spaces of $\B$ satisfy Theorem \ref{B-subp}.
\end{corollary}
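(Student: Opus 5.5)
The successive duals of $\B$ are $\B$, $\B^*=JT$, $\B^{**}=JT^*$, $\B^{***}=JT^{**}$, $\B^{(4)}=JT^{***}$, and so on. The plan is to reduce every one of them to the three cases already handled, namely Theorem \ref{B-subp}, Theorem \ref{JT-subp}(1) and Theorem \ref{JT-subp}(2). The reduction uses the canonical decompositions
\[
\B^{(n+3)} = \B^{(n+1)} \oplus (\B^{(n)})^{\perp} \qquad (n\ge 0),
\]
where $\B^{(n+1)}$ sits canonically in $\B^{(n+3)}$ and the annihilator is taken in $\B^{(n+3)}=(\B^{(n+2)})^*$. We will show, by induction on $n$, that every space $\B^{(n)}$ is isomorphic to a finite direct sum whose summands are either $\B$, $JT$, $JT^*$ or Hilbert spaces $\ell_2(I)$. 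For example, $JT^{**}=JT\oplus \ell_2(\Gamma)$ by the discussion preceding Proposition \ref{P4}.

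For the inductive step, note that $(\B^{(n)})^\perp$ is isometric to $(\B^{(n+2)}/\B^{(n)})^*$. Taking the $(n)$-th dual of the quotient identity $JT^*/\B\cong\ell_2(\Gamma)$ of Theorem \ref{P2}, and recalling that duals of Hilbert spaces are Hilbert, we expect $\B^{(n+2)}/\B^{(n)}$ to be a Hilbert space for every $n$. Concretely, $\B^{(n+2)}/\B^{(n)}\cong(\B^{(n-1)})^{\perp\,*}$, and this is a dual of a Hilbert space by the induction hypothesis. Hence $\B^{(n+3)}\simeq \B^{(n+1)}\oplus H_n$ with $H_n$ Hilbert, and unrolling the recursion gives the claim. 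A cleaner way to say this is that every even dual is $JT^*\oplus H$ and every odd dual is $JT\oplus H$, with $H$ Hilbert.

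It then suffices to prove that if $X$ satisfies the conclusion (every infinite-dimensional closed subspace contains a copy of $\ell_2$ complemented in $X$) and $H$ is a Hilbert space, then so does $X\oplus H$. Let $M\subset X\oplus H$ be an infinite-dimensional closed subspace, and let $Q$ be the projection onto $X$ along $H$.

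If $Q|_M$ is strictly singular, we use the perturbation argument exactly as in Theorem \ref{JT-subp}(2)(i). It produces an infinite-dimensional $M_1\subset M$ and an automorphism $U$ with $U(M_1)\subset H$. Every subspace of $H$ is complemented, so we may take a separable copy of $\ell_2$ inside $U(M_1)$; it is complemented in $H$ and hence in $X\oplus H$, and we then pull it back by $U^{-1}$.

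If $Q|_M$ is not strictly singular, there is an infinite-dimensional $M_1\subset M$ on which $Q$ is an isomorphism. Apply the hypothesis on $X$ to $Q(M_1)$ to get $L\subset Q(M_1)$ with $L\simeq\ell_2$ and a projection $P$ onto $L$. Set $L'=(Q|_{M_1})^{-1}(L)\subset M$. Then $(Q|_{L'})^{-1}PQ$ is a projection of $X\oplus H$ onto $L'$.

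The main obstacle is identifying the decompositions of the higher duals rigorously, that is, checking that each annihilator quotient is Hilbertian. Once that is settled, the two-case argument above is routine.
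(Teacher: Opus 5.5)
Your proposal is correct and follows essentially the paper's route. The paper identifies the odd and even higher duals of $\B$ with $JT\oplus\ell_2(\Gamma)$ and $JT^*\oplus\ell_2(\Gamma)$ (citing Fetter--Gamboa de Buen) and tacitly uses that the property passes to direct sums with a Hilbert space; your two-case argument supplies that step explicitly. The structural step you flag as the main obstacle is immediate by repeatedly dualizing $JT^{**}\simeq JT\oplus\ell_2(\Gamma)$, and in your induction the correct identification is $\B^{(n+2)}/\B^{(n)}\cong(\B^{(n-1)})^{\perp}\cong(\B^{(n+1)}/\B^{(n-1)})^*$, without the extra star.
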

\begin{proof}
Theorem \ref{B-subp} is satisfied by $JT^{**}\simeq JT\oplus \ell_2(\Gamma)$ and $JT^{***}\simeq JT^*\oplus \ell_2(\Gamma)$, and for every positive integer $k$, the dual spaces of order $2k+1$ of $\B$ are isomorphic to $JT\oplus \ell_2(\Gamma)$ and the dual spaces of order $2k+2$ of $\B$ are isomorphic to $JT^*\oplus \ell_2(\Gamma)$ \cite[Corollary 3.c.7]{FetterG}.
\end{proof}

\subsection{The spaces $\B$, $JT$ and $JT^*$ are superprojective}\label{sect:superp}

\begin{proposition}\label{Bsuper}
The space $\B$ is superprojective.
\end{proposition}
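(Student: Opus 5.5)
We have to show: if $N\subset\B$ is closed with infinite codimension, then $N\subset Z$ for some closed $Z$ of infinite codimension that is complemented in $\B$. The plan is to dualize and use the subprojectivity of $JT=\B^*$ from Theorem \ref{JT-subp}(1), keeping every projection weak$^*$ continuous so that it comes from $\B$.

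\textbf{Step 1 (pass to the annihilator).} Since $N$ has infinite codimension, $N^\perp\subset JT$ is a weak$^*$-closed, infinite-dimensional subspace. It is enough to find an infinite-dimensional subspace $E\subset N^\perp$ and a weak$^*$-continuous projection $Q$ on $JT$ with range $E$. Indeed, then $Q=P^*$ for a projection $P$ on $\B$. The space $Z=\ker P$ is complemented in $\B$, and $Z^\perp=\operatorname{ran}P^*=E$ is infinite-dimensional, so $Z$ has infinite codimension. Finally, $N\subset Z$, because $Z=E_\perp\supset (N^\perp)_\perp=N$.

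\textbf{Step 2 (build $E$).} Apply the proof of Theorem \ref{JT-subp}(1) to $M=N^\perp$, which has a seminormalized basic sequence $(x_k)$.
\begin{itemize}
\item Since $JT$ contains no copy of $\ell_1$ (Theorem \ref{TH_JT}), Rosenthal's theorem lets us pass to a weakly Cauchy subsequence.
\item The differences $y_k=x_{2k}-x_{2k-1}$ are then seminormalized, weakly null, and lie in $N^\perp$.
\item In particular $0$ is a weak$^*$ cluster point of $(y_k)$ in $JT=\B^*$. Theorem \ref{G-MA} gives a basic subsequence, still denoted $(y_k)$, and a bounded sequence $(v_k)\subset\B$ with $\langle v_i,y_j\rangle=\delta_{ij}$.
\item By Theorem \ref{A-I} (Amemiya--Ito), after passing to a further subsequence $(y_k)$ is equivalent to the unit vector basis of $\ell_2$.
\item We also arrange that $(v_k)$ admits an upper $\ell_2$-estimate; this is the delicate point discussed below.
\end{itemize}
With these in hand, define $P u=\sum_k\langle u,y_k\rangle v_k$ on $\B$. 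Then $Q=P^*x=\sum_k\langle v_k,x\rangle y_k$ is a weak$^*$-continuous projection on $JT$ with range $E=[y_k]\subset N^\perp$, which is what Step 1 requires.

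\textbf{Step 3 (the upper estimate on $(v_k)$).} Boundedness of $P$ needs two facts: $(\langle u,y_k\rangle)_k\in\ell_2$ for every $u$, and $(v_k)$ has an upper $\ell_2$-estimate.

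The first holds because $(y_k)$ dominates the $\ell_2$ basis. The map $\sum a_ky_k\mapsto\sum a_ky_k$ sends $\ell_2$ isomorphically onto $E$, and $u\mapsto (\langle u,y_k\rangle)_k$ is the restriction of $u$ to $E$ composed with the dual of that isomorphism. Hence it is a bounded map $\B\to\ell_2$.

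The second is where care is needed. Since $(v_k)$ is bounded, we may pass to a subsequence and replace $v_k$ by $v_{2k}-v_{2k-1}$-type perturbations of a level-block sequence. Concretely, we perturb the $v_k$ by small amounts to make them level-block in $\B$, after first using the weak$^*$-nullness of $(y_k)$ to choose the $y$'s supported far out. Corollary \ref{u-est} then gives the upper $\ell_2$-estimate for the normalized level-block sequence. Small perturbations preserve both the upper estimate and the approximate biorthogonality, and biorthogonality can then be restored by the usual Neumann series correction.

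This step is the main obstacle. If the direct approach fails, a cleaner alternative is as follows. Since $E$ is reflexive, one only needs a weak$^*$-closed complement, and it suffices to show that $\overline{\operatorname{span}}\{v_k\}$ is isomorphic to $\ell_2$. That follows by the argument of Proposition \ref{l2inB} once the $v_k$ are approximately level-block. The remaining checks — that $P^2=P$ and that the range of $P^*$ is exactly $E$ — are routine.
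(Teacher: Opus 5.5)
Your Step 1 reduction is correct. The shape of your final projection, $Pu=\sum_k\langle u,y_k\rangle v_k$ on $\B$ with $y_k\in N^\perp$ and $v_k\in\B$, is also the same as in the paper.

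\textbf{The gap.} It is the step you flag yourself: the upper $\ell_2$-estimate for $(v_k)$ is never established, and the reason you give does not work. Theorem \ref{G-MA} only provides \emph{some} bounded sequence $(v_k)\subset\B$ biorthogonal to $(y_k)$. Where the $y_k$ are supported puts no constraint on where the $v_k$ are supported. For instance, adding a fixed $w\neq 0$ in $\{y_j\}_\perp$ to every $v_k$ keeps boundedness and biorthogonality. But it makes $(v_k)$ fail to be coordinatewise null, and such a sequence has no subsequence that is a small perturbation of a level-block sequence. So ``perturb the $v_k$ by small amounts to make them level-block'' is unjustified. Your ``cleaner alternative'' rests on the same unproved assumption. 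The difference trick you allude to is also wrong as stated: replacing $v_k$ by $v_{2k}-v_{2k-1}$ while keeping $(y_k)$ destroys biorthogonality.

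\textbf{The fix: re-pair the sequences.}
\begin{itemize}
\item Since $\B^*=JT$ is separable, $\B$ contains no copy of $\ell_1$. So after passing to a subsequence, $(v_k)$ is weakly Cauchy.
\item Use the pairs $(v_{2k}-v_{2k-1},\,y_{2k})$. They are still exactly biorthogonal, since $\langle v_{2k}-v_{2k-1},y_{2j}\rangle=\delta_{2k,2j}-\delta_{2k-1,2j}=\delta_{kj}$. Also $(y_{2k})\subset N^\perp$ is still equivalent to the $\ell_2$ basis.
\item The sequence $(v_{2k}-v_{2k-1})$ is weakly null, hence coordinatewise null. It is seminormalized, since its pairing with $y_{2k}$ is $1$.
\item Hence a subsequence is a summable perturbation of a seminormalized level-block sequence in $\B$. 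Corollary \ref{u-est} then gives the upper $\ell_2$-estimate.
\end{itemize}
No Neumann-series correction is needed, because biorthogonality is exact. This is the same device the paper uses in case (3) of Proposition \ref{JTsuper}.

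\textbf{How the paper avoids the issue.} It first approximates a sequence in $N^\perp$ by a normalized level-block sequence $(z^*_k)$ in $JT$. It then takes biorthogonal functionals in $\B$ and makes both sequences $\ell_2$ via Proposition \ref{l2inB} and Theorem \ref{A-I}. Finally it moves back into $N^\perp$ with the automorphism $U=I-K$ of $\B$ satisfying $U^*z^*_k=y^*_k$.
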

\begin{proof}
Let $M$ be a closed infinite-codimensional subspace of $\B$. Then $M^\perp$ is an infinite dimensional subspace of $JT$.
As in the proof of Theorem \ref{B-subp}, given a sequence of positive numbers $(\e_k)$, there are a normalized level-block sequence $(z^*_k)$
in $JT$ and a sequence $(y^*_k)$ in $M^\perp$ such that $\|z^*_k-y^*_k\|<\e_k$ for each $k\in \N$.
By Theorem \ref{G-MA}, passing to a subsequence we can assume that coefficient functionals $(z_k)$ of  $(z^*_k)$ are in $\B$.
And by Proposition \ref{l2inB} and Theorem \ref{A-I}, passing to a subsequence we can assume that the sequences $(z_k)$ and $(z^*_k)$ are both
equivalent to the unit vector basis of $\ell_2$.

Taking the numbers $(\e_k)$ small enough, $Kx=\sum_{k=1}^\infty \langle x,z^*_k-y^*_k\rangle z_k$ defines an operator on $\B$ with $\|K\|<1$.
Then $U = I-K$ is an isomorphism in $\B$ and the conjugate operator $U^*:JT\to JT$ is given by
$U^*x^*= x^*-\sum_{k=1}^\infty \langle z_k,x^*\rangle(z^*_k-y^*_k)$ and satisfies $U^*z^*_k=y^*_k$ for every $k\in \N$.

We denote $y_k=U^{-1}z_k$. Then $(y_k)$ is equivalent to the unit vector basis of $\ell_2$ and
$$
\langle y_k,y^*_l \rangle= \langle U^{-1}z_k,U^*z^*_l \rangle= \langle z_k,z^*_l \rangle = \delta_{i,j}.
$$

Then $Px= \sum_{i=1}^\infty \langle x,y^*_i\rangle y_i$ defines a projection on $\B$ with kernel $\cap_{i=1}^\infty \ker z^*_i$,
which is an infinite codimensional subspace of $\B$ and contains $M$ because $(z^*_i)\subset M^\perp$.
Thus $\B$ is superprojective.
\end{proof}

To study $JT$, we need the following well-known result.

\begin{lemma}\label{nonsepdual}
If $X$ is separable and $X^*$ is non-separable, then every reflexive subspace of $X^*$ is separable.
\end{lemma}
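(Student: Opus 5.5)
Let $R$ be a reflexive closed subspace of $X^*$, where $X$ is separable and $X^*$ is non-separable. I will show $R$ is separable by exhibiting it as a weak$^*$-metrizable, weak$^*$-compactly generated space.

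First, since $R$ is reflexive, its unit ball $B_R$ is weakly compact in $R$, hence weakly compact in $X^*$. The weak topology of $X^*$ is finer than the weak$^*$ topology, so $B_R$ is weak$^*$-compact, and the weak$^*$ and weak topologies coincide on $B_R$.

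Next, because $X$ is separable, the weak$^*$ topology on bounded subsets of $X^*$ is metrizable. Fix a dense sequence $(x_n)$ in $B_X$; then $d(f,g)=\sum_n 2^{-n}|\langle f-g,x_n\rangle|$ is a compatible metric on $B_{X^*}$. Thus $B_R$ is a compact metric space in the weak$^*$ topology, hence weak$^*$-separable. By the previous step it is therefore weakly separable.

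Finally, pick a countable weakly dense subset $D$ of $B_R$. The norm-closed linear span of $D$ is convex and norm closed, hence weakly closed by Mazur's theorem. It therefore contains the weak closure of $D$, which is $B_R$. So $R$ equals the closed span of a countable set and is norm separable.

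The only delicate point is the identification of the weak and weak$^*$ topologies on $B_R$. This follows from compactness: the identity map $(B_R,\text{weak})\to(B_R,\text{weak}^*)$ is a continuous bijection from a compact space onto a Hausdorff space, hence a homeomorphism.

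Note that the hypothesis that $X^*$ is non-separable is not used. It only records the setting in which the lemma will be applied, namely $X=\B$ and $X^*=JT$, to conclude that reflexive subspaces of $JT$, such as copies of $\ell_2$, are separable.
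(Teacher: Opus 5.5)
Your argument is correct and complete. Weak compactness of $B_R$ in $X^*$ gives weak$^*$-compactness, and the compact-to-Hausdorff argument identifies the weak and weak$^*$ topologies on $B_R$. Separability of $X$ then makes $(B_R,w^*)$ a compact metric space. Finally, Mazur's theorem turns weak separability of $B_R$ into norm separability of $R$.

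The paper gives no proof of this lemma; it is quoted as well known, so there is nothing to compare against. A shorter standard route goes through the restriction map $X\to R^*$, $x\mapsto \hat x|_R$, which has norm-dense range. Indeed, by reflexivity any functional on $R^*$ annihilating this range is given by some $r\in R\subset X^*$ with $r(x)=0$ for all $x\in X$, so $r=0$. Hence $R^*$ is separable, and therefore so is $R$. That route replaces metrizability and Mazur by the fact that separability of a dual passes to the predual. Both arguments show, as you correctly observe, that non-separability of $X^*$ plays no role.

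One correction to your closing remark: the lemma cannot be applied with $X=\B$ and $X^*=JT$. Since $JT$ is separable, the hypothesis fails there and the conclusion is trivial. In the paper it is applied with $X=JT$ and $X^*=JT^*$. The aim is to see that certain reflexive subspaces of $JT^*$ are separable, hence isomorphic to $\ell_2$. Examples are $M^\perp$ in case (3) of Proposition~\ref{JTsuper}, or a complement of a complemented subspace containing $\B$. Both are isomorphic to Hilbert spaces because $JT^*/\B\cong\ell_2(\Gamma)$.
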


It follows from Lemma \ref{nonsepdual} that if a subspace of $JT^*$ containing $\B$ is complemented, then the complement is isomorphic to a separable Hilbert space.

\begin{proposition}\label{JTsuper}
The space $JT$ is superprojective.
\end{proposition}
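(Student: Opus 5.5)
Let $M$ be a closed infinite-codimensional subspace of $JT$. The plan is to dualize, as in Proposition \ref{Bsuper}, but now working in $JT^*$ with the annihilator $M^\perp$. This subspace is infinite-dimensional, since $M$ has infinite codimension. By Theorem \ref{JT-subp}(2), $M^\perp$ contains a subspace $N\simeq\ell_2$ that is complemented in $JT^*$. The goal is to produce a sequence $(y^*_k)\subset M^\perp$, equivalent to the $\ell_2$ basis, together with a bounded biorthogonal sequence $(y_k)\subset JT$ that also spans an $\ell_2$ and satisfies $\langle y_k, y^*_l\rangle=\delta_{k,l}$. Then
\[ Px=\sum_k \langle x,y^*_k\rangle y_k \]
is a bounded projection on $JT$. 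Its kernel is $\bigcap_k\ker y^*_k$, which contains $M$ and has infinite codimension because the range $[y_k]$ is infinite-dimensional.

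The construction splits into the same two cases as in Theorem \ref{JT-subp}(2), according to whether $V|_{M^\perp}$ is strictly singular.

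\emph{Case $V|_{M^\perp}$ strictly singular.} By perturbation, choose a normalized sequence $(y^*_k)\subset M^\perp$ that is arbitrarily close to a normalized level-block sequence $(z^*_k)$ of $(e^*_n)$ in $\B$. Let $(x_k)\subset JT$ be norm-bounded level-block vectors with $\langle x_k,z^*_l\rangle=\delta_{k,l}$, obtained via Proposition \ref{P_A} as in Proposition \ref{l2inB}. Pass to a subsequence so that $(x_k)$ is weakly Cauchy, and replace the pairs by $z^*_{2k}$ and $x_{2k}-x_{2k-1}$. Then $z^*_{2k}$ still satisfies the upper estimate of Corollary \ref{u-est}, the differences $x_{2k}-x_{2k-1}$ are biorthogonal to it, and Theorem \ref{A-I} makes both sequences $\ell_2$ after a further subsequence. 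Now repeat the argument of Proposition \ref{Bsuper} with the roles of the spaces transposed. The operator $K^*=\sum_k\langle\cdot,x_k'\rangle(z^*_k-y^*_k)$, with $x_k'$ the paired differences, is small for small $\e_k$. Hence $U=I-K$ is an automorphism of $JT$ whose adjoint sends $z^*_k$ to $y^*_k$, and we take $y_k=U^{-1}x_k'$.

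\emph{Case $V|_{M^\perp}$ not strictly singular.} Take $(y^*_k)\subset M^\perp$ on which $V$ is an isomorphism. Passing to a subsequence and a small perturbation, one may assume $V(y^*_k)$ is close to a block sequence in $\ell_2(\Gamma^*)$ supported on disjoint countable sets of branches. Then $(y^*_k)$ should be close to a sequence of the form $\sum_i\lambda^k_i\sigma^*_{i,>n_i}$, as in Proposition \ref{P4}, plus a $\B$-part. The natural biorthogonal vectors come from $\sum_i\lambda^k_i e_{m_i}$, with nodes $m_i\in\sigma_{i,>n_i}$ chosen incomparable and deep. Such vectors are isometrically $\ell_2$ (Lemma \ref{L2}) and almost biorthogonal to $(y^*_k)$ modulo the $\B$-part. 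Using the $w^*$-convergence of $e_m$ along branches, push the nodes deep enough that the $\B$-components contribute only small errors. This yields the $y_k$ after a small perturbation corrected by an automorphism, as in the first case.

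The main obstacle is this second case. Elements of $M^\perp$ are arbitrary elements of $JT^*$, and one must control the cross terms between the $\ell_2(\Gamma)$-part and the $\B$-part simultaneously for infinitely many $k$. I would handle it with a diagonal choice of depths. An alternative route, to try if that fails, is to show via Lemma \ref{nonsepdual} that one can reduce to $M^\perp\cap\B$ being infinite-dimensional, or otherwise invoke Proposition \ref{P4} to complement $Y_\perp$ directly and then take the preimage in $JT$.
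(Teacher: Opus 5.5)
\emph{Verdict: there is a genuine gap in your second case; your first case is fine.}

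Your first case ($V|_{M^\perp}$ strictly singular) is correct. It is a transposed version of Proposition \ref{Bsuper}. The gap is the second case ($V|_{M^\perp}$ not strictly singular), which you only sketch, and the sketch fails at a concrete step. There you need a bounded sequence in $JT$ that is biorthogonal to an $\ell_2$-sequence in $M^\perp$ and is itself equivalent to the $\ell_2$ basis.

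You claim the candidates $u_k=\sum_i\lambda^k_i e_{m_i}$ are isometrically $\ell_2$ by Lemma \ref{L2}. That requires all the nodes $m_i$, taken over all $k$, to be pairwise incomparable, and this cannot be arranged in general. The branch sets of the blocks $V(y^*_k)$ are arbitrary. Suppose some $\sigma_{i_0}$ is a pointwise limit of other $\sigma_i$'s. Then every node of $\sigma_{i_0}$ lies on infinitely many of those $\sigma_i$, and any node chosen on such a $\sigma_i$ is comparable with it. Nothing in your construction rules this out.

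Your fallback routes also fail:
\begin{itemize}
\item A reduction to $M^\perp\cap\B$ infinite-dimensional is impossible in general. Take $Y=[\sigma^*_{i,>n_i}]$ with pairwise disjoint final segments and $M=Y_\perp$. Since $Y$ is reflexive, it is weak$^*$-closed, so $M^\perp=Y$. Moreover $Y\cap\B=\{0\}$, because $V$ is an isometry on $Y$ (proof of Theorem \ref{P2}).
\item Proposition \ref{P4} produces complemented subspaces $Z_\perp$ of $JT^*$ with $Z\subset JT^{**}$. That is one duality level too high to give a complemented subspace of $JT$ containing $M$.
\end{itemize}

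The missing ingredient is Theorem \ref{G-MA}, which is exactly what the paper uses in this case. Take any $(z_n)\subset M^\perp$ equivalent to the unit vector basis of $\ell_2$.
\begin{enumerate}
\item $(z_n)$ is weakly null, hence weak$^*$-null. So after passing to a subsequence there is a bounded $(x_n)\subset JT$ with $\langle x_i,z_j\rangle=\delta_{i,j}$.
\item Since $JT$ contains no copy of $\ell_1$ (Theorem \ref{TH_JT}), pass to a weakly Cauchy subsequence of $(x_n)$.
\item Set $y_n=x_{2n}-x_{2n-1}$. This sequence is seminormalized and weakly null, and $\langle y_i,z_{2j}\rangle=\delta_{i,j}$.
\item By Theorem \ref{A-I} (after the usual block perturbation), some subsequence $(y_{n_j})$ is equivalent to the $\ell_2$ basis.
\item Then $Px=\sum_j\langle x,z_{2n_j}\rangle y_{n_j}$ is a bounded projection on $JT$. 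Its kernel $\bigcap_j\ker z_{2n_j}$ contains $M$ and has infinite codimension.
\end{enumerate}
No control of the $\B$-components is needed.

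In fact, apply this argument to the subspace $N\simeq\ell_2$ inside $M^\perp$ that you already extract from Theorem \ref{JT-subp}(2) in your first paragraph. It proves the proposition at once, so your case distinction, including your first case, becomes unnecessary. Only $N\simeq\ell_2$ is used, not the complementation of $N$ in $JT^*$.
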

\begin{proof}
Let $M$ be a closed infinite-codimensional subspace of $JT$. Then $M^\perp$ is infinite-dimensional in $JT^*$. We consider three cases:
\smallskip

(1) \emph{$M^\perp\cap \B$ infinite-dimensional.} By Theorem \ref{B-subp}, $M^\perp\cap \B$ contains a subspace $N$ isomorphic to $\ell_2$ and complemented in $\B$. If $P\in\Lc(\B)$ is a projection on $\B$ onto $N$, then $P^{**}$ is a projection on $JT^*$ onto $N$. Thus $N$ is complemented in $JT^*$.

If $A$ and $C$ denote $N$ as a subspace of $\B$ and $JT^*$, then $A^\perp =C_\perp$ and $C= (C_\perp)^\perp$. Hence $A^\perp$ is an infinite-codimensional complemented subspace of $JT$ containing $M$.

(2) \emph{$M^\perp\cap \B$ finite-dimensional and $M^\perp +\B$ non-closed.} A perturbative argument allows us to reduce this case to the first one.

Indeed, in this case the range of the operator $Q_BJ_{M^\perp}$ is $(M^\perp+B)/B$, which is not closed in $JT/B$. By \cite[Proposition 2.c.4]{L-T:77},
there exists a closed infinite-dimensional subspace $M_1$ of $M^\perp$ such that $Q_\B J_{M_1}$ is compact.
So there is a normalized weakly null sequence $(x^*_n)$ in $M^\perp$ with $\textrm{dist}(x^*_n,\B)\to 0$. By Theorem \ref{G-MA}, passing to a subsequence
we can assume the existence of $(y^*_n)$ in $\B$ and a bounded sequence $(x_n)$ in $JT$ so that $\la x_i,x^*_j\ra= \delta_{i,j}$ and
$\sum_{n=1}^\infty \|x_n\|\, \|x^*_n- y^*_n\|<1$.

The operator $K\in\Lc(JT)$ defined by $K(x)= \sum_{n=1}^\infty \la x,x^*_n-y^*_n\ra x_n$ has norm $\|K\|<1$ and $K^*(x^*)= \sum_{n=1}^\infty \la x_n,x^*\ra(x^*_n-y^*_n)$. Thus $I-K$ is bijective and $(I-K^*)x^*_n= y^*_n$.

We take $N=(I-K)^{-1}(M)$. Then $N^\perp= (I-K^*)(M^\perp)$. Thus $N^\perp\cap \B$ is infinite-dimensional. By (1), $N$ is contained in an infinite-codimensional complemented subspace $U$. Hence $N=(I-K)^{-1}(U)$ is an infinite-codimensional complemented subspace containing $M$.
\smallskip

(3) \emph{$M^\perp\cap\B$ finite-dimensional and $M^\perp +\B$ closed.} Adding to $M$ a finite-dimensional subspace, we can assume $M^\perp\cap \B=\{0\}$. Then the restriction $V|_{M^\perp}$ is an isomorphism. Hence, since $V(M^\perp)$ is complemented, $M^\perp$ is complemented in $JT^*$, and it is isomorphic to $\ell_2$ by Lemma \ref{nonsepdual}.

We take an orthonormal basis $(z_n)$ in $M^\perp$ and, as in the proof of Theorem \ref{JT-subp}(1), we select a bounded sequence $(x_n)$ in $JT$ such that $\la x_i, z_j\ra=\delta_{i,j}$. Passing to a subsequence we can assume that $(x_n)$ is weakly Cauchy, hence $y_n= x_{2n}- x_{2n-1}$ provides a weakly null sequence such that $\la y_i, z_{2j}\ra=\delta_{i,j}$. By Theorem \ref{A-I}, passing to a subsequence we can assume that $(y_i)$ is equivalent to the unit vector basis of $\ell_2$.

Thus we can assume that both $(z_n)$ in $M^\perp$ and $(y_i)$ in $JT$ are equivalent to the unit vector basis of $\ell_2$ and biorthogonal. Hence $P(x) =\sum_{n=1}^\infty \la x,z_n\ra y_n$ defines a projection on $JT$ with range $[y_n]$ and kernel $\{z_n : n\in\N\}_\perp \supset M$, and the result is proved.
\end{proof}

Next, we prove that $JT^*$ is superprojective using the following consequence of the results of \cite{G-P:24}.

\begin{proposition}\label{GP-24}
Let $M$ be a closed subspace of $X$. If $M$ is superprojective and every closed infinite-codimensional subspace of $X$ containing $M$ is contained in a complemented infinite-codimensional subspace, then $X$ is superprojective.
\end{proposition}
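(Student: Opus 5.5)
The plan is to start from a closed infinite-codimensional subspace $Y$ of $X$ and split into cases according to the size of $\overline{Y+M}$. The closed-sum case is direct. The non-closed case with finite-codimensional closure is the step I am least sure of: I only sketch a perturbation reduction to the closed case, and it has an unresolved gap, described below.

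\emph{Case 1: $\overline{Y+M}$ has infinite codimension in $X$.} This subspace is closed, infinite-codimensional and contains $M$. By hypothesis it is contained in a complemented infinite-codimensional subspace $Z$. Since $Y\subset Z$, we are done.

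\emph{Case 2: $\overline{Y+M}$ has finite codimension and $Y+M$ is closed.}
\begin{itemize}
\item[(a)] The natural algebraic bijection $M/(Y\cap M)\to (Y+M)/Y$ is continuous. It is an isomorphism by the open mapping theorem, since $Y+M$ is closed.
\item[(b)] $(Y+M)/Y$ has finite codimension in the infinite-dimensional space $X/Y$. Hence $Y\cap M$ is infinite-codimensional in $M$.
\item[(c)] Superprojectivity of $M$ gives a complemented infinite-codimensional subspace $W$ of $M$ with $Y\cap M\subset W$. Let $P$ be a projection on $M$ with $\ker P=W$ and infinite-dimensional range $R$.
\item[(d)] Define $P'(y+m)=Pm$ on $Y+M$. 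This is well defined because $Y\cap M\subset W=\ker P$.
\item[(e)] $P'$ is bounded. The surjection $Y\oplus M\to Y+M$ is open, since $Y+M$ is closed, and $(y,m)\mapsto Pm$ is bounded.
\item[(f)] Write $X=(Y+M)\oplus F$ with $\dim F<\infty$, and extend $P'$ by $0$ on $F$.
\end{itemize}
The result is a projection on $X$ with range $R$ and kernel containing $Y$. Its kernel is infinite-codimensional because $R$ is infinite-dimensional.

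\emph{Case 3: $\overline{Y+M}$ has finite codimension and $Y+M$ is not closed.} Here $Q_YJ_M$ has non-closed range. By \cite[Proposition 2.c.4]{L-T:77} it is then compact on some infinite-dimensional subspace of $M$. So there are a normalized basic sequence $(m_n)$ in $M$ and vectors $y_n\in Y$ with $\sum_n\|m_n-y_n\|$ as small as desired. Following case (2) of Proposition~\ref{JTsuper}, take a bounded biorthogonal system $(m_n,m_n^*)$ and set $K(x)=\sum_n\la x,m_n^*\ra(y_n-m_n)$, so that $\|K\|<1$. Then $U=I-K$ is an automorphism of $X$ with $U(m_n)=y_n$.

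Since Case 1 holds for $U^{-1}(Y)$ exactly when it holds for $Y$, and $U$ carries complemented infinite-codimensional subspaces to subspaces of the same kind, it suffices to treat $U^{-1}(Y)$. This subspace contains $[m_n]\subset M$, so its intersection with $M$ is infinite-dimensional. The aim is to iterate this, enlarging $Y\cap M$ inside $M$, until either Case 1 or the closed-sum Case 2 applies.

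The gap is that nothing above shows the iteration ends in the closed-sum case. The point to secure is: every infinite-codimensional $Y$ with $\overline{Y+M}$ of finite codimension should be contained in a larger infinite-codimensional $Y'$ for which $Y'+M$ is closed. This is what the results of \cite{G-P:24} on the closedness of $Y+M$ should provide. Absent that, the non-closed case remains open in this plan.
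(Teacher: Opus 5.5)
Your Cases~1 and~2 are correct. Case~3, however, is a genuine gap, and the statement you single out as the point to secure is false even when all hypotheses of the proposition hold.

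\emph{Counterexample.} Let $X=\ell_2\oplus\ell_2$, where every closed subspace is complemented. Let $Y=\ell_2\oplus\{0\}$, $D=\mathrm{diag}(1/k)$ and $M=\{(x,Dx):x\in\ell_2\}$. Then $Y+M=\ell_2\oplus D(\ell_2)$ is dense and not closed. Suppose $Y'\supseteq Y$ is closed and $Y'+M$ is closed. Then $Y'+M=X$, so the map $M\to X/Y'$ is onto. But this map factors through $M\to X/Y\cong\ell_2$, $(x,Dx)\mapsto Dx$, which is compact. Hence $\dim X/Y'<\infty$, and no enlargement of $Y$ can reach your closed-sum case.

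\emph{Two smaller points.} First, with your $K$ one gets $U(m_n)=2m_n-y_n$; you need $K(x)=\sum_n\la x,m_n^*\ra(m_n-y_n)$. Second, the claim that Case~1 holds for $U^{-1}(Y)$ exactly when it holds for $Y$ is false. Your reduction to $U^{-1}(Y)$ is still valid, because $U$ preserves complemented infinite-codimensional subspaces. In the example, take $m_n$ along coordinates $k_n$ with $\sum_n 1/k_n$ small, and $m_n^*$ depending only on the first coordinate. Then $U(a,b)=(a,b-DPa)$, with $P$ the orthogonal projection onto $[e_{k_n}]$. One computes $\overline{U^{-1}(Y)+M}=\{(u,v):v-Du\in[e_k:k\notin\{k_n\}]\}$, which has infinite codimension.

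So the right target is Case~1. Making $Y\cap M$ large gives no general control over $\overline{Y+M}$. What must be made infinite-dimensional is $(Y+M)^\perp=Y^\perp\cap M^\perp$. This is what case~(2) of Proposition~\ref{JTsuper} really does: it perturbs the \emph{annihilator} into the other subspace, not the subspace itself.

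\emph{The missing step.}
\begin{enumerate}
\item Add to $Y$ a finite-dimensional complement of $\overline{Y+M}$; this keeps the sum non-closed. So you may assume $Y^\perp\cap M^\perp=\{0\}$.
\item The restriction map $Y^\perp\to M^*$, $f\mapsto f|_M$, is the adjoint of $Q_YJ_M$. It is therefore injective and, by the closed range theorem, has non-closed range. Hence there are normalized $f_n\in Y^\perp$ with $\mathrm{dist}(f_n,M^\perp)=\|f_n|_M\|\to 0$.
\item Every weak$^*$-cluster point of $(f_n)$ lies in $Y^\perp\cap M^\perp=\{0\}$. So Theorem~\ref{G-MA} gives a basic subsequence, still denoted $(f_n)$, and a bounded $(x_n)\subset X$ with $\la x_i,f_j\ra=\delta_{ij}$.
\item Pass to a further subsequence and pick $g_n\in M^\perp$ with $\sum_n\|x_n\|\,\|f_n-g_n\|<1$. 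Set $Kx=\sum_n\la x,f_n-g_n\ra x_n$ and $U=I-K$.
\item Then $U$ is an automorphism with $U^*f_n=g_n$. So $(U^{-1}(Y))^\perp=U^*(Y^\perp)$ contains the linearly independent vectors $g_n\in M^\perp$.
\item Thus $\overline{U^{-1}(Y)+M}$ has infinite codimension. Case~1 gives a complemented infinite-codimensional $Z\supseteq U^{-1}(Y)$, and then $Y\subseteq U(Z)$.
\end{enumerate}
This step uses only the second hypothesis; superprojectivity of $M$ enters only in Case~2. With it your argument becomes a self-contained proof. The paper gives no such argument: it translates the two hypotheses into superprojectivity of the operators $J_M$ and $Q_M$ in the sense of \cite{G-P:24}, and quotes \cite[Theorem 2.7]{G-P:24}.
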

\begin{proof}
With the terminology of \cite{G-P:24}, $M$ superprojective implies $J_M$ superprojective, and the second condition is equivalent to $Q_M$ superprojective. So, the result follows from \cite[Theorem 2.7]{G-P:24}.
\end{proof}

\begin{proposition}
The space $JT^*$ is superprojective.
\end{proposition}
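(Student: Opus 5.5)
We apply Proposition \ref{GP-24} with $X=JT^*$ and $M=\B$. By Proposition \ref{Bsuper}, $\B$ is superprojective. It therefore suffices to show the following: every closed infinite-codimensional subspace $Z$ of $JT^*$ with $\B\subset Z$ is contained in a complemented infinite-codimensional subspace of $JT^*$.

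\textbf{Reduction to the annihilator.} Take such a $Z$. Since $Z\supset\B$, we have $Z^\perp\subset \B^\perp$ inside $JT^{**}$. Recall $\B^\perp=\ell_2(\Gamma^{**})$ and $JT^{**}=JT\oplus\ell_2(\Gamma^{**})$. The quotient $V(Z)$ is a closed subspace of $JT^*/\B\cong\ell_2(\Gamma^*)$, and it has infinite codimension there. By Hilbert space duality, $Z^\perp$, which is identified with $V(Z)^\perp\subset \ell_2(\Gamma^{**})$, is an infinite-dimensional closed subspace. Choose a separable infinite-dimensional closed subspace $W\subset Z^\perp$, for instance the span of a countable orthonormal sequence in $Z^\perp$.

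\textbf{Main step.} By Proposition \ref{P4}, $W_\perp$ is complemented in $JT^*$. Since $W\subset Z^\perp$, we get $Z\subset (Z^\perp)_\perp\subset W_\perp$; the first inclusion holds because $Z$ is closed, and it does not require weak$^*$-closedness. It remains to check that $W_\perp$ has infinite codimension in $JT^*$. The proof of Proposition \ref{P4} helps here. There we take an orthonormal basis $(z_k^{**})$ of $W$ and vectors $z_k^*\in JT^*$ with $\langle z_j^{**},z_k^*\rangle=\delta_{jk}$. The projection $P$ has kernel $W_\perp$ and range $[z_k^*]\cong\ell_2$, which is infinite-dimensional. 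Hence $W_\perp$ is complemented with an infinite-dimensional complement, and so it has infinite codimension.

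\textbf{Conclusion and obstacle.} With both hypotheses of Proposition \ref{GP-24} verified, $JT^*$ is superprojective. The point needing the most care is the identification of $Z^\perp$ with an infinite-dimensional subspace of $\ell_2(\Gamma^{**})$. Concretely, one must check that $(JT^*/\B)^*\cong \B^\perp$ isometrically, and that infinite codimension of $Z$ in $JT^*$ gives infinite codimension of $V(Z)$ in $JT^*/\B$. The second fact holds because $Z\supset\B=\ker V$, so $JT^*/Z\cong (JT^*/\B)/V(Z)$. With that, $V(Z)^\perp$ is infinite-dimensional in the Hilbert space. The remaining steps use only Propositions \ref{P4} and \ref{Bsuper}.
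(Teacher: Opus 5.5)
Your proposal is correct and matches the paper's proof: apply Proposition \ref{GP-24} with $M=\B$, choose a separable infinite-dimensional subspace $W$ of the annihilator inside $\B^\perp=\ell_2(\Gamma^{**})$, and use Proposition \ref{P4} to get that $W_\perp$ is complemented. You also spell out points the paper only asserts, namely that $\B$ is superprojective by Proposition \ref{Bsuper}, that $Z\subset W_\perp$, and that $W_\perp$ has infinite codimension (and the infinite-dimensionality of $Z^\perp\cong (JT^*/Z)^*$ follows directly, without going through $V(Z)$).
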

\begin{proof}
By Proposition \ref{GP-24}, we have to show that every closed infinite-codimensional subspace $M$ of $JT^*$ containing $\B$ is contained in an
infinite-codimensional complemented subspace.
Note that $JT=\B^*$ and $JT^*/\B\simeq \ell_2(\Gamma)$, hence $JT^{**}\simeq JT\oplus \ell_2(\Gamma^*)$ and $\B^\perp =\ell_2(\Gamma^*)$.

Since $M^\perp$ is a closed infinite-dimensional subspace of $\B^\perp=\ell_2(\Gamma^*)$, it contains  an infinite-dimensional separable subspace $Z$.
Then $Z_\perp$ is an infinite-codimensional subspace of $JT^*$ that contains $M$, and $Z_\perp$ is complemented in $JT^*$ by Proposition \ref{P4}.
\end{proof}

\begin{corollary}
All the successive dual spaces of $\B$ are superprojective.
\end{corollary}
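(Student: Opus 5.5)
The plan is to reduce the statement to the two previous projectivity results, using the explicit description of the higher duals already invoked in Corollary \ref{higher-d}. By \cite[Corollary 3.c.7]{FetterG}, every dual of $\B$ of odd order is isomorphic to $JT\oplus \ell_2(\Gamma)$ (or to $JT$ itself in the first-order case), and every dual of even order $\ge 2$ is isomorphic to $JT^*\oplus \ell_2(\Gamma)$. Superprojectivity is an isomorphic invariant, so it suffices to prove that $X\oplus H$ is superprojective whenever $X$ is superprojective and $H$ is a Hilbert space. We already know that $X=JT$ is superprojective (Proposition \ref{JTsuper}) and that $X=JT^*$ is superprojective (the preceding proposition).

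For $X\oplus H$ I would apply Proposition \ref{GP-24} with the subspace $M=X\oplus\{0\}$, which is superprojective by hypothesis. It then remains to check the second condition. Let $W$ be a closed infinite-codimensional subspace of $X\oplus H$ that contains $X\oplus\{0\}$. Then $W=X\oplus W_H$, where $W_H=W\cap H$ is a closed subspace of $H$. Its codimension in $H$ equals the codimension of $W$ in $X\oplus H$, so it is infinite. Since $H$ is a Hilbert space, $W_H$ is complemented in $H$ by the orthogonal projection, and its orthogonal complement is infinite-dimensional. Hence $W$ is itself complemented, with complement $\{0\}\oplus W_H^\perp$, and it is infinite-codimensional. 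So $W$ serves as its own complemented enlargement, the hypothesis of Proposition \ref{GP-24} holds, and $X\oplus H$ is superprojective.

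Alternatively, one can bypass Proposition \ref{GP-24} and quote the standard fact that finite direct sums of superprojective spaces are superprojective \cite{G-P:16}, since Hilbert spaces are trivially superprojective.

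The main obstacle is only bookkeeping. One must confirm that the cited identification covers all orders: $\B$ (Proposition \ref{Bsuper}), $JT$, $JT^*$, and then the higher duals, which are of the form $JT\oplus\ell_2(\Gamma)$ or $JT^*\oplus\ell_2(\Gamma)$. One must also confirm that Proposition \ref{GP-24} applies with $M$ a complemented summand, where the decomposition $W=X\oplus W_H$ is immediate.
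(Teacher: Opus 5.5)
Your proposal is correct and follows the paper's route. As in Corollary \ref{higher-d}, you identify the higher duals of $\B$ with $JT\oplus\ell_2(\Gamma)$ and $JT^*\oplus\ell_2(\Gamma)$ and pass superprojectivity to the sum, which you justify cleanly via Proposition \ref{GP-24} with $M=X\oplus\{0\}$. One small slip: by Lemma \ref{nonsepdual}, the second dual $JT^*$ contains no non-separable Hilbert space, so it is not isomorphic to $JT^*\oplus\ell_2(\Gamma)$, and ``even order $\ge 2$'' should read ``$\ge 4$''; this is harmless, since $JT^*$ is handled directly by the preceding proposition.
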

\begin{proof}
It is similar to that of Corollary \ref{higher-d}.
\end{proof}

\subsection{Uncomplemented copies of $\ell_2$}\label{sect:uncompl}

For $n\in\N$ and $1\leq p\leq\infty$, we denote $\ell_p^n =(\R^n,\|\cdot\|_p)$. We will need the following result.

\begin{proposition}\label{prop:FG} \emph{\cite[Theorem 2.d.6]{FetterG}}
Let $(n_i)$ be a sequence in $\N$. Then $\ell_2( \ell_\infty^{n_i})$ is isomorphic to a complemented subspace of $J$.
Consequently, $\ell_2(\ell_1^{n_i})$ is isomorphic to a complemented subspace of $J^*$.
\end{proposition}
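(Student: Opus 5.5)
Proposition~\ref{prop:FG} is quoted from \cite[Theorem 2.d.6]{FetterG}. To prove it directly, I would realize $\ell_2(\ell_\infty^{n_i})$ inside $J$ using disjoint blocks of the unit vector basis $(e_k)$ of $J$. I will use the (non-shrinking) James norm
\[
\|x\|_J=\sup\Big(\sum_{j}\big(\sum_{k\in I_j}x_k\big)^2\Big)^{1/2},
\]
where the supremum runs over families of disjoint intervals; this is the restriction of $\|\cdot\|_{JT}$ to a branch, as recalled in the Introduction.

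\emph{Step 1: an $\ell_\infty^{n}$-block.} Fix $n$ and consecutive disjoint intervals $A_1<\dots<A_n$ of $\N$. Put $u_m=\sum_{k\in A_m}e_k$, so that $\sum_m a_mu_m$ is the step function equal to $a_m$ on $A_m$. Computing its James norm requires care: the interval sums are $\sum_m a_m|A_m\cap I|$, which favour long intervals. So I would take the vectors $u_m=e_{p_m}-e_{q_m}$ with $p_m<q_m$ in $A_m$ (single ``bumps''). For $x=\sum_m a_mu_m$, every interval sum lies in $\{a_m,\,-a_m,\,a_m-a_{m'},\dots\}$ and telescopes, giving
\[
\max_m|a_m|\le\|x\|_J\le C\max_m|a_m|.
\]
Here I expect a square-function bound, and I would check that the constant is uniform in $n$. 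Indeed, the sequence $(e_{p_m}-e_{q_m})_m$ is equivalent to the $\ell_2$ basis in $J$, not $\ell_\infty$. So the right block vectors must instead be the summing-type vectors $u_m=\sum_{k\le m}f_k$, with $f_k$ differences of basis vectors, and the James norm of $\sum_m a_mu_m$ should be comparable to the quadratic variation of the partial sums. I would tune the construction so that the coordinate functionals $(a_m)\mapsto a_m-a_{m+1}$ interact with the James norm exactly as the $\sup$-norm. This step is the main obstacle, and I would first test the standard choice in which the James norm of $\sum_m a_me_m$ is compared with $\sup|a_m|$ for $(a_m)$ a block of length $n$ padded with zeros.

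\emph{Step 2: projections.} For each block $E_i\simeq\ell_\infty^{n_i}$ I will build a projection $P_i$ on $J$ with $\|P_i\|\le C$ and support in the corresponding interval $B_i$. The projection is given by averaging coordinates over the intervals $A_m$. Averaging over disjoint intervals is a contraction in $J$, since the interval sums of the averaged vector are dominated via Cauchy--Schwarz.

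\emph{Step 3: gluing.} Place the blocks on consecutive intervals $B_1<B_2<\cdots$ separated by gaps. Vectors supported on the separated intervals $B_i$ satisfy
\[
\Big\|\sum_i x_i\Big\|_J\approx\Big(\sum_i\|x_i\|_J^2\Big)^{1/2},
\]
up to constants: the lower estimate holds as in Lemma~\ref{l-est}, and the upper estimate holds for vectors with zero sum on each block, by the same Cauchy--Schwarz argument as in Corollary~\ref{u-est}. I will arrange mean zero on each block, which the bump construction allows. Then $P=\sum_iP_iR_{B_i}$, where $R_{B_i}$ is the restriction to $B_i$, is bounded, using the $\ell_2$-sum estimates in both directions, and projects onto $\ell_2(E_i)\simeq\ell_2(\ell_\infty^{n_i})$.

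\emph{Dual statement.} Finally, dualizing gives the second claim: $P^*$ is a projection on $J^*$ whose range is isomorphic to $\bigl(\ell_2(\ell_\infty^{n_i})\bigr)^*=\ell_2(\ell_1^{n_i})$.
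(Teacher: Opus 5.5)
\textbf{There is a genuine gap, at the step you call ``the main obstacle''.} The paper gives no argument for this proposition; it simply quotes \cite[Theorem 2.d.6]{FetterG}. Judged as a self-contained proof, Step~1 never produces uniformly complemented copies of $\ell_\infty^n$ in $J$, and each candidate you name gives something else.

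\begin{itemize}
\item Your displayed two-sided estimate for the bumps $u_m=e_{p_m}-e_{q_m}$ is false. Singleton intervals give $\|\sum_m a_mu_m\|_J\ge(2\sum_m a_m^2)^{1/2}$, as you then observe yourself.
\item Summing-type vectors built from disjoint bumps inherit the same $\ell_2$ lower estimate, now in the tail sums $\sum_{m\ge k}a_m$.
\item The suggested test comparing $\|\sum_m a_me_m\|_J$ with $\sup_m|a_m|$ fails, because $\|\sum_m a_me_m\|_J\ge(\sum_m a_m^2)^{1/2}$.
\end{itemize}

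Step~2 does not rescue this. Averaging over $A_1<\dots<A_n$ is indeed a contraction, but its range is spanned by $v_m=|A_m|^{-1}\mathbf{1}_{A_m}$. The partial sums of $\sum_m a_mv_m$ are piecewise linear with nodal values $0,a_1,a_1+a_2,\dots$, and by convexity the supremum defining the norm is attained at the nodes. Hence $\|\sum_m a_mv_m\|_J=\|\sum_m a_me_m\|_J$. So your $P_i$ projects onto an isometric copy of $[e_1,\dots,e_n]$, not onto $\ell_\infty^n$. These spans are not even uniformly isomorphic to $\ell_\infty^n$: otherwise $J$ would be an $\mathcal{L}_\infty$-space, hence have the Dunford--Pettis property, and could not contain complemented copies of $\ell_2$.

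Step~3 is fine in itself. For intervals $B_i$ one has $\sum_i\|R_{B_i}x\|_J^2\le\|x\|_J^2$, and blocks with zero sum satisfy the upper $\ell_2$ estimate with constant $\sqrt2$ by the argument of Lemma~\ref{L6}. The dualization is also fine. But without correct $\ell_\infty^n$-blocks they have nothing to glue.

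\textbf{The missing idea.} $\|x\|_J$ is the quadratic variation of the partial sums $S_x(j)=\sum_{k\le j}x(k)$. So any construction in which the jumps of $S_x$ can be isolated by short intervals carries an $\ell_2$ lower estimate. You need vectors with small coordinates whose partial sums oscillate at widely separated scales. One construction that works:
\begin{itemize}
\item On a block of length $L$, let $S_{u_m}$ be a triangle wave with $N_m$ periods and height $N_m^{-1/2}$, where $N_1\ll N_2\ll\dots\ll N_n$ and the periods are nested with even ratios.
\item Upper estimate: coarse scales contribute at most slope times length, and fine scales at most their height. So an interval of length $\ell$ carries an increment of $\sum_m a_mS_{u_m}$ of size at most $C(\ell/L)^{1/2}\max_m|a_m|$, whence $\|\sum_m a_mu_m\|_J\le C'\max_m|a_m|$.
\item Lower estimate: the half-period partition at the scale where $|a_m|$ is maximal gives the reverse inequality.
\item Projection: let $\phi_m=(2N_m^{1/2})^{-1}\sum_j\pm I_j^*$, summed over the half-periods $I_j$ of scale $m$ with the signs of the slopes. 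These have norm at most $2^{-1/2}$ by the argument of Lemma~\ref{L3} and are biorthogonal to $(u_m)$. They give projections $x\mapsto\sum_m\phi_m(x)u_m$ of norm $O(1)$, precisely because the range is $\ell_\infty^n$.
\end{itemize}
These blocks have zero sum, so your Step~3 and the dualization then complete the proof.
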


If $X$ and $Y$ are isomorphic Banach spaces, the \emph{Banach-Mazur distance between $X$ and $Y$} is defined by
$$
d_{BM}(X,Y) = \inf\{\|T^{-1}\| : T\in \Lc(X,Y)  \textrm{ bijective and }\|T\|=1\}.
$$

For a finite-dimensional space $Y$, a constant $C\geq 1$, and a Banach space $X$ containing a subspace $N_0$
with $d_{BM}(Y,N_0) \leq C$, the $C$-projection constant of $Y$ in $X$ is
$$
A_C(Y,X)=\inf\{\|P\|: P \textrm{ projection on $X$ onto $N$ with }d_{BM}(Y,N) \leq C\}.
$$

A Banach space $X$ has the \emph{Dunford-Pettis property} if every weakly compact operator $T:X\to Y$ takes weakly convergent
sequences to convergent sequences.

It is easy to prove that complemented reflexive subspaces of Banach spaces with the Dunford-Pettis property
are finite dimensional.

\begin{lemma}\label{l2-linfty}
There exists $C>1$ and a sequence $(k_n)$ in $\N$ such that for each $n\in \N$ there exists a subspace $Y_n$ of $\ell_\infty^{k_n}$ with
$d_{BM}(Y_n,\ell_2^n)\leq C$ and $\lim_{n\to\infty} A_C(Y_n,\ell_\infty^{k_n}) =\infty$.
\end{lemma}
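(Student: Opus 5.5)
The plan is to take $C=2$ and to produce $Y_n$ by a net argument. The lower bound on projection constants will come from Grothendieck's inequality through $2$-summing norms.

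\emph{Step 1 (embedding $\ell_2^n$ into $\ell_\infty^{k_n}$).} Fix $n$ and a $\tfrac12$-net $\{f_1,\dots,f_{k_n}\}$ of the unit sphere of $\ell_2^n$; a volume argument allows $k_n\le 5^n$. Define $S_n:\ell_2^n\to\ell_\infty^{k_n}$ by $S_n x=(\langle x,f_j\rangle)_{j=1}^{k_n}$. Clearly $\|S_nx\|_\infty\le\|x\|_2$. For the lower bound, take $\|x\|_2=1$ and choose $f_j$ with $\|x-f_j\|\le\tfrac12$. Then $\langle x,f_j\rangle\ge 1-\tfrac12\|x-f_j\|^2\ge\tfrac78$, and in particular $\|S_nx\|_\infty\ge \tfrac12$. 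Hence $Y_n:=S_n(\ell_2^n)$ satisfies $d_{BM}(Y_n,\ell_2^n)\le 2=C$. This also shows that the infimum defining $A_C(Y_n,\ell_\infty^{k_n})$ is taken over a nonempty set, since every subspace of the finite-dimensional space $\ell_\infty^{k_n}$ is complemented.

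\emph{Step 2 (lower bound on every admissible projection).} Let $N\subset\ell_\infty^{k_n}$ satisfy $d_{BM}(N,\ell_2^n)\le C$, and let $P$ be a projection of $\ell_\infty^{k_n}$ onto $N$. Choose $T:N\to\ell_2^n$ bijective with $\|T\|\,\|T^{-1}\|\le C$ (up to an arbitrary $\e$). Then
\[
I_{\ell_2^n}=(TP)\circ(J_N T^{-1}).
\]
Grothendieck's inequality gives $\pi_2(TP)\le K_G\|TP\|\le K_G\|T\|\,\|P\|$, because $TP$ acts from a $C(K)$-space into a Hilbert space. By the ideal property,
\[
\sqrt n=\pi_2(I_{\ell_2^n})\le \pi_2(TP)\,\|T^{-1}\|\le K_G\,C\,\|P\|.
\]
Hence $A_C(Y_n,\ell_\infty^{k_n})\ge \sqrt n/(K_G C)\to\infty$, which proves the lemma. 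The identity $\pi_2(I_{\ell_2^n})=\sqrt n$ is standard: the orthonormal basis has weak $\ell_2$-norm $1$ and strong $\ell_2$-norm $\sqrt n$.

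The only substantive ingredient is Step 2, since a lower bound must hold uniformly over \emph{all} $C$-Euclidean subspaces and all projections onto them. Grothendieck's theorem is the cleanest tool for this, and I would cite it rather than reprove it. The lemma needs only divergence of the projection constants, not the $\sqrt n$ rate.

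If an argument closer to the Dunford--Pettis remark preceding the lemma is preferred, I would argue by contradiction. Suppose $\|P_n\|\le K$ for all $n$, with $P_n$ projecting onto $N_n$ and $d_{BM}(N_n,\ell_2^n)\le C$. Then $\ell_\infty(N_n)$ is complemented in $\ell_\infty(\ell_\infty^{k_n})\cong\ell_\infty$, and from the uniformly Euclidean blocks one extracts a complemented reflexive infinite-dimensional subspace of a space with the Dunford--Pettis property, which is impossible. Producing a copy of $\ell_2$ that is genuinely complemented in $\ell_\infty(N_n)$ requires an ultrafilter-limit projection and is more delicate. For that reason I would use the Grothendieck route.
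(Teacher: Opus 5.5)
Your proof is correct, but your main step differs from the paper's. The paper gets $Y_n$ from finite representability of $\ell_2$ in $c_0$ and proves $A_C(Y_n,\ell_\infty^{k_n})\to\infty$ softly, by contradiction. If there were uniformly bounded projections $P_n$ onto admissible $N_n$, then $(P_n)_\U$ would project the ultraproduct $(\ell_\infty^{k_n})_\U$ onto the infinite-dimensional Hilbert space $(N_n)_\U$. That is impossible, because $(\ell_\infty^{k_n})_\U$ is an $\mathcal{L}_\infty$-space and so has the Dunford--Pettis property.

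This is the alternative you sketch at the end. Working in the ultraproduct rather than in $\ell_\infty(N_n)$ is exactly what removes the delicacy you mention, since the ultraproduct of the $P_n$ is already a bounded projection onto a Hilbert space.

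Your main route is the one the paper only alludes to after Lemma \ref{l2-l1} ("can be derived from Grothendieck's theorem"). You use an explicit net embedding with $k_n\le 5^n$, then the factorization $I_{\ell_2^n}=(TP)(J_NT^{-1})$ together with the little Grothendieck theorem. This is finite-dimensional and quantitative: it gives projection constants of order at least $\sqrt n$. The cost is that it relies on $2$-summing operators. The paper's argument is purely qualitative but needs only the Dunford--Pettis property of $\mathcal{L}_\infty$-spaces, and it carries over verbatim to Lemma \ref{l2-l1}. Yours carries over too, using that operators $\ell_1\to\ell_2$ are $1$-summing with $\pi_1\le K_G\|\cdot\|$, and that $\pi_1(I_{\ell_2^n})\ge\pi_2(I_{\ell_2^n})=\sqrt n$.

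One small correction. In the paper's definition, $A_C(Y_n,\ell_\infty^{k_n})$ is an infimum over projections onto subspaces $N$ with $d_{BM}(Y_n,N)\le C$. Your Step 2 instead assumes $d_{BM}(N,\ell_2^n)\le C$. This is harmless. Step 1 actually gives $d_{BM}(Y_n,\ell_2^n)\le 8/7$, and $d_{BM}$ is submultiplicative, so every admissible $N$ satisfies $d_{BM}(N,\ell_2^n)\le 8C/7$. Step 2 then yields $\|P\|\ge 7\sqrt n/(8K_G C)$, which still tends to infinity.
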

\begin{proof}
The existence of the subspaces $Y_n$ follows from the fact that $\ell_2$ is finitely representable in $c_0$.
Moreover $\lim_{n\to\infty} A_C(Y_n, \ell_\infty^{k_n}) =\infty$. Otherwise, taking ultrapowers with respect to a non-trivial ultrafilter $\U$ on $\N$,
we get that $(\ell_\infty)_\U$, which is a $\Lc_\infty$-space, has a complemented subspace isomorphic to an infinite-dimensional Hilbert space,
which is not possible because $\Lc_\infty$-spaces have the Dunford-Pettis property \cite[Chapter 1]{Bo:81}.
\end{proof}

\begin{lemma}\label{l2-l1}
There exists $C>1$ such that for each $n\in\N$ there exists a subspace $X_n$ of $\ell_1^{2^n}$ such that $d_{BM}(X_n,\ell_2^n)\leq C$ and
$\lim_{n\to\infty} A_C(X_n,\ell_1^{2^n}) =\infty$.
\end{lemma}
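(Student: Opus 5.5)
The plan is to realize $\ell_2^n$ inside $\ell_1^{2^n}$ by means of Rademacher functions, and then to force the projection constants to blow up by a dual ultrapower argument, in the same way as in Lemma \ref{l2-linfty}.

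\emph{Construction of $X_n$.} Identify $\ell_1^{2^n}$ isometrically with $L_1(\{-1,1\}^n,\mu)$, where $\mu$ is the uniform probability measure. This identification sends $e_j$ to $2^n\chi_{\{\omega_j\}}$, or equivalently rescales coordinates by $2^{-n}$. Let $X_n$ be the span of the coordinate functions $r_1,\dots,r_n$; these are the Rademacher functions. Khintchine's inequality in $L_1$ gives
\[
A_1\Big(\sum_{i=1}^n a_i^2\Big)^{1/2}\le \Big\|\sum_{i=1}^n a_ir_i\Big\|_{L_1}\le \Big(\sum_{i=1}^n a_i^2\Big)^{1/2},
\]
with $A_1=1/\sqrt2$ (Szarek). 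Hence $d_{BM}(X_n,\ell_2^n)\le \sqrt2$, and we may take any $C\ge\sqrt2$. For the argument below we fix some $C>\sqrt 2$, say $C=2$, so that the infimum defining $A_C(X_n,\ell_1^{2^n})$ is taken over a nonempty class.

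\emph{Divergence of the projection constants.} Suppose, towards a contradiction, that $\lim_n A_C(X_n,\ell_1^{2^n})=\infty$ fails. Then along a subsequence $n\in L$ there are subspaces $N_n\subset\ell_1^{2^n}$ with $d_{BM}(N_n,\ell_2^n)\le C$, and projections $P_n$ onto $N_n$ with $\sup_{n\in L}\|P_n\|\le K$. Take a non-trivial ultrafilter $\U$ on $L$ and form the ultraproduct $Z=(\ell_1^{2^n})_\U$. This is an $\Lc_1$-space; in fact it is isometric to an $L_1(\mu)$-space, since ultraproducts of $L_1$-spaces are $L_1$-spaces.

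The ultraproduct $N=(N_n)_\U$ is isomorphic to $(\ell_2^n)_\U$, with isomorphism constant at most $C$. The space $(\ell_2^n)_\U$ is an infinite-dimensional Hilbert space. Moreover, $P=(P_n)_\U$ is a bounded projection of $Z$ onto $N$ with $\|P\|\le K$. So $Z$ contains a complemented infinite-dimensional reflexive subspace.

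By \cite[Chapter 1]{Bo:81}, $\Lc_1$-spaces have the Dunford-Pettis property. As noted just before Lemma \ref{l2-linfty}, complemented reflexive subspaces of such spaces are finite-dimensional. This is a contradiction, so $A_C(X_n,\ell_1^{2^n})\to\infty$.

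Alternatively, one can dualize Lemma \ref{l2-linfty}. A projection of norm $K$ onto $N$ in $\ell_1^{k}$ dualizes to a projection of norm $K$ in $\ell_\infty^k$ onto a space at distance at most $C$ from $\ell_2^n$. This route, however, requires control of $k_n$ versus $2^n$, so the direct ultraproduct route is cleaner.

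\emph{Main obstacle.} The delicate point is confirming that the ultraproduct of the $\ell_1^{2^n}$ is genuinely an $\Lc_1$-space with the Dunford-Pettis property. This follows from the stability of the $\Lc_{1,\lambda}$ condition under ultraproducts, or from the fact that abstract $L_1$-lattices are preserved, combined with the Dunford-Pettis property of $L_1(\mu)$. Everything else reduces to Khintchine's inequality and the ultraproduct of the uniformly bounded projections, which is routine.
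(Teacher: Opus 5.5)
Your proposal is correct and follows the paper's proof: $X_n=[r_1,\dots,r_n]$ sits in $\ell_1^{2^n}$, and $A_C\to\infty$ comes from the ultraproduct/Dunford--Pettis argument of Lemma \ref{l2-linfty}, with you supplying details the paper leaves implicit (Szarek's constant, and the fact that ultraproducts of $L_1$-spaces are $L_1$-spaces). One small slip: by the definition of $A_C(X_n,\ell_1^{2^n})$ the admissible $N_n$ satisfy $d_{BM}(X_n,N_n)\le C$, not $d_{BM}(N_n,\ell_2^n)\le C$, so you only get $d_{BM}(N_n,\ell_2^n)\le\sqrt2\,C$; this bound is still uniform in $n$, so the argument is unaffected.
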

\begin{proof}
We take as $X_n$ the subspace generated by the Rademacher functions $r_1,\ldots,r_n$ in $L_1(0,1)$, which is contained in a subspace
generated by $2^n$ pairwise disjoint characteristic functions, and $C=d_{BM}([r_n],\ell_2)$.
That $\lim_{n\to\infty} A_C(X_n, \ell_1^{2^n}) =\infty$ can be proved as in Lemma \ref{l2-linfty}, since $\Lc_1$-spaces also have the Dunford-Pettis property.
\end{proof}

We can see that Lemmas \ref{l2-linfty} and \ref{l2-l1} can be derived from Grothendieck's theorem in \cite[Section]{LP:68}.

\begin{proposition}
Each of the spaces $J$ and $J^*$ contains a copy of $\ell_2$ which is not complemented.
\end{proposition}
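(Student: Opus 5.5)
We will use Proposition \ref{prop:FG} together with Lemmas \ref{l2-linfty} and \ref{l2-l1}. The plan for $J$ is to embed $\ell_2$ "diagonally" into a complemented copy of $\ell_2(\ell_\infty^{k_n})$, choosing the $n$-th piece badly complemented. Take $C$ and $(k_n)$ from Lemma \ref{l2-linfty}, and subspaces $Y_n\subset \ell_\infty^{k_n}$ with $d_{BM}(Y_n,\ell_2^n)\le C$ and $A_C(Y_n,\ell_\infty^{k_n})\to\infty$. By Proposition \ref{prop:FG}, $W=\ell_2(\ell_\infty^{k_n})$ is isomorphic to a complemented subspace of $J$; fix an isomorphism $T:W\to J$ onto a complemented subspace, with projection $Q$ onto $T(W)$. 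The subspace $Y=\ell_2(Y_n)\subset W$ is isomorphic to $\ell_2(\ell_2^n)\simeq\ell_2$, with constant at most $C$. We claim $T(Y)$ is uncomplemented in $J$.

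Suppose $P$ is a projection of $J$ onto $T(Y)$. Then $R=T^{-1}QPT$ (more precisely $T^{-1}PT$ restricted to $W$, noting $P$ maps into $T(Y)\subset T(W)$) is a bounded projection of $W$ onto $Y$, with $\|R\|\le \|T\|\|T^{-1}\|\|P\|$. Let $I_n:\ell_\infty^{k_n}\to W$ be the canonical inclusion of the $n$-th coordinate and $\pi_n:W\to\ell_\infty^{k_n}$ the coordinate projection; both have norm one. Then $R_n=\pi_n R I_n$ is an operator on $\ell_\infty^{k_n}$ of norm at most $\|R\|$. It is a projection onto $Y_n$: for $y\in Y_n$, $I_ny\in Y$, so $RI_ny=I_ny$ and $R_ny=y$; and its range lies in $\pi_n(Y)=Y_n$. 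Hence $A_C(Y_n,\ell_\infty^{k_n})\le\|R\|$ for all $n$, which contradicts Lemma \ref{l2-linfty}. This is the key step, and it works because the coordinate structure of $\ell_2$-sums makes compressions of a global projection into local projections onto exactly $Y_n$.

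For $J^*$ the argument is identical. Use Lemma \ref{l2-l1} with $X_n\subset\ell_1^{2^n}$, the complemented copy of $\ell_2(\ell_1^{2^n})$ in $J^*$ from Proposition \ref{prop:FG}, and the subspace $\ell_2(X_n)\simeq\ell_2$. The compression argument again yields uniformly bounded projections of $\ell_1^{2^n}$ onto $X_n$, a contradiction.

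The only delicate point is bookkeeping: we must check that $\ell_2(Y_n)$ is genuinely isomorphic to $\ell_2$, which requires the uniform constant $C$ on the Banach--Mazur distances, and that the compressed operators are projections onto $Y_n$ itself rather than onto some other $C$-isomorph. Both facts follow from the computation above.
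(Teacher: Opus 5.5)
Your proof is correct and follows the paper's route. The paper simply cites Proposition \ref{prop:FG} and Lemmas \ref{l2-linfty} and \ref{l2-l1}, and your diagonal subspace $\ell_2(Y_n)\subset\ell_2(\ell_\infty^{k_n})$ (resp.\ $\ell_2(X_n)\subset\ell_2(\ell_1^{2^n})$), with the coordinate compression $\pi_n R I_n$ producing projections onto $Y_n$ itself, is exactly the intended unpacking of that citation. As your ``more precisely'' remark shows, only the embedding of the $\ell_2$-sum is needed, not its complementation, since any projection of $J$ onto $T(Y)$ already restricts to one on $T(W)$.
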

\begin{proof}
It is a consequence of Proposition \ref{prop:FG} and Lemmas \ref{l2-linfty} and \ref{l2-l1}.
\end{proof}

\begin{proposition}
Each of the spaces $\B$, $JT$ and $JT^*$ contains a copy of $\ell_2$ which is not complemented.
\end{proposition}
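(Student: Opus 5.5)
The plan is to transfer the uncomplemented copies of $\ell_2$ from $J$ and $J^*$ (the previous proposition) into $JT$, $JT^*$ and $\B$. Each of these copies is built from the pieces given by Proposition \ref{prop:FG} together with Lemmas \ref{l2-linfty} and \ref{l2-l1}. The key observation is that $J$ sits inside $JT$ as a $1$-complemented subspace. Indeed, fix a branch $\s$. By Proposition \ref{P_A}, the set $A=\s$ meets every segment in a segment, so $P_\s$ is a norm-one projection of $JT$ onto $[e_\alpha]_{\alpha\in\s}$, and this space is isometric to $J$.

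For $JT$, let $Y\subset J\cong[e_\alpha]_{\alpha\in\s}$ be an uncomplemented copy of $\ell_2$. If $Q$ were a projection of $JT$ onto $Y$, then $P_\s Q|_{[e_\alpha]_{\alpha\in\s}}$ would be a projection of $J$ onto $Y$, which is impossible. Hence $Y$ is not complemented in $JT$. This is just the general fact that complementation is transitive: a subspace complemented in $X$ is complemented in every intermediate subspace.

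For $JT^*$, dualize. Since $[e_\alpha]_{\alpha\in\s}$ is $1$-complemented in $JT$ via $P_\s$, the range of $P_\s^*$ is a $1$-complemented subspace of $JT^*$ isomorphic to $J^*$. Taking the uncomplemented copy of $\ell_2$ in $J^*$ from the previous proposition and applying the same transitivity argument gives an uncomplemented $\ell_2$ in $JT^*$.

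For $\B$, there are two routes. The first is to show that $P_\s^*$ maps $\B$ into $\B$: it sends $e_n^*$ to $e_n^*$ or to $0$, so it is the adjoint of the restriction $P_\s|_\B$. Then $P_\s$ restricted to $\B$ projects onto $[e_n^*]_{n\in\s}$, which is the predual of $J$, namely the span of the biorthogonals of the $J$ basis. That span is isomorphic to $J^*$ because the basis of $J$ is shrinking, and the complemented copies of $\ell_2(\ell_1^{n_i})$ in $J^*$ from Proposition \ref{prop:FG} live in this span. To be safe, I would build them directly from the block structure: in the $J$ basis, the spaces $\ell_2(\ell_\infty^{n_i})$ arise from blocks, and their dual blocks lie in the span of the biorthogonals. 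The second route avoids these identifications. Suppose $Y\subset\B$, $Y\simeq\ell_2$, and $Y$ is complemented by $Q$. Then $Q^{**}$ is a projection of $JT^*$ onto $Y^{**}=Y$, so it suffices to take a copy of $\ell_2$ lying in $\B$ that is uncomplemented in $JT^*$. The $J^*$ copy above is contained in $P_\s^*(\B)\subset\B$, provided it lies in the norm-closed span of the $e_n^*$. This holds because $J^*=[e_n^*]\oplus\R$, and the copy is built from finitely supported blocks.

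The main obstacle is this last identification: checking that the uncomplemented $\ell_2$ inside $J^*$ can be realized within the norm-closed span of the biorthogonal functionals, that is, inside $\B$ rather than merely $JT^*$. I would handle it by writing the copy of $\ell_2(\ell_1^{n_i})$ explicitly as the span of functionals supported on consecutive finite blocks of $\s$. Such a span is automatically in $[e_n^*]$, and the whole argument then reduces to the transitivity of complementation.
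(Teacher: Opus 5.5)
Your strategy is the paper's: its proof is only the remark that along a branch one finds $J$ inside $JT$ and $J^*$ inside $\B$ and $JT^*$. This is combined with the previous proposition and the fact that a subspace uncomplemented in an intermediate subspace is uncomplemented in the whole space. Your $JT$ and $JT^*$ cases are correct, and working with $\ran P_\s^*$, which is isometric to $J^*$, is a little cleaner than the paper. The $1$-complementation of $[e_\alpha]_{\alpha\in\s}$ plays no role, though: restricting a projection onto $Y$ to any intermediate subspace already gives a projection onto $Y$.

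The $\B$ case, however, contains one false step and one unproved one.

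\emph{The false step.} The basis $(e_n)$ of $J$ is boundedly complete and not shrinking, since otherwise $J$ would be reflexive. Indeed $\|I_\infty^*-\sum_{n\le N}c_ne_n^*\|\ge 1$ for every $N$ (test on $e_{N+1}$), so $[e_n^*]$ is a proper hyperplane of $J^*$. The justification in your first route is therefore wrong.

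\emph{The unproved step.} Your second route rests on the claim that the uncomplemented $\ell_2$ in $J^*$ lies in $[e_n^*]$ because it is ``built from finitely supported blocks''. That is an unverified assertion about the internals of Proposition \ref{prop:FG}. There the copy of $\ell_2(\ell_1^{n_i})$ arises by duality as the range of an adjoint projection, and such ranges need not lie in $[e_n^*]$. For example, the projection $x\mapsto I_\infty^*(x)e_1$ has adjoint with range $\R I_\infty^*$.

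\emph{The repair.} It uses the decomposition $J^*=[e_n^*]\oplus\R$ that you already wrote down. Let $Y\subset J^*$ be an uncomplemented copy of $\ell_2$ and put $Y_0=Y\cap[e_n^*]$. Then $Y_0$ has codimension at most one in $Y$, so $Y_0\simeq\ell_2$. Moreover $Y_0$ is still uncomplemented in $J^*$, since a projection onto $Y_0$ would yield one onto $Y=Y_0\oplus F$ with $\dim F\le 1$. Now transport $Y_0$ by the isometry $J^*\cong\ran P_\s^*$; this isometry carries $[e_n^*]$ onto $[e^*_\alpha]_{\alpha\in\s}\subset\B$. Your $Q^{**}$ argument, or simply the complementation of the hyperplane $[e_n^*]$ in $J^*$, then shows the image is uncomplemented in $\B$. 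Alternatively, use that $J^*$ is isomorphic to its hyperplanes; this is what the paper's assertion $[e_t^* : t\in B]\simeq J^*$ tacitly relies on.
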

\begin{proof}
Given a branch $B\in\Gamma$, the subspace $[e_t : t\in B]$ is isomorphic to $J^*$ in $\B$ and $JT^*$, and it is isomorphic to $J$ in $JT$.
\end{proof}

\section{Non-Reflexive Subspaces of $JT$}

In this section, we will show that every non-reflexive subspace of $JT$ contains a copy of the James space $J$. For this, we need a refinement of the
corresponding result for $J$.

\subsection{Non-Trivial Weakly Cauchy Sequences in $J$}

A sequence of real numbers $x = (x(n))_{n\in \N}$ is in $J$ when
\begin{equation}\label{eq:Jnorm}
\Vert x \Vert= \sup \biggl\{\biggl( \sum_{i=1}^{n} \Big| \sum _{k \in I_{i} }x(k)\Big|^2 \biggr)^{1/2} :\; \{ I_i \}_{i=1}^n \text{ disjoint intervals of } \N \biggr \} <\infty.
\end{equation}

The basis $(e_n)_{n \in \N}$ of $c_{00}$ defines a boundedly complete basis of $J$. Further, every interval $I$ of $\N$ defines by the rule $I^*(x) = \sum_{n\in I} x(n)$ a norm-one functional $I^* \in J^*$. We denote by $I_{\infty}^*$ the functional corresponding to $I= \N$.
Notice that the basis of $J$ is isometric to the sequence $(e_n)_{n\in \s}$ in $JT$ for every $\s \in \Gamma$.

\begin{definition}
A sequence $(x_n)_n$ in a Banach space $X$ is called non-trivial weakly Cauchy if $x_n\xrightarrow{w^*} x^{**} \in X^{**} \setminus X$.
\end{definition}

We also need the following definition and result from \cite{AG}.

\begin{definition}[\cite{AG}, Definition 3.1]
Let $x\in J$. A family $\{I_i\}_{i\in F}$ of finite or infinite disjoint intervals of $\N$ with $F$ either an initial interval of $\N$ or $F=\N$ is said to be an $x$-norming partition if
\[ \| x\| = \Big( \sum_{i\in F} \Big( \sum_{k\in I_i} x(k) \Big)^2 \Big)^{1/2}. \]
\end{definition}

\begin{proposition}[\cite{AG}, Corollary 3.4]  \label{AG1}
Every $x \in J$ admits an $x$-norming partition.
\end{proposition}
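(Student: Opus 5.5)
The goal is to show that every $x\in J$ admits a family of disjoint intervals $\{I_i\}_{i\in F}$ (possibly infinite intervals, possibly infinitely many of them) at which the supremum in \eqref{eq:Jnorm} is attained. The approach is a compactness argument: take near-optimal finite partitions, extract a pointwise limit, and check via lower semicontinuity plus the decay of $x$ that no mass is lost in the limit.

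\textbf{Step 1 (near-optimal finite partitions).} Fix $x\in J$. For each $m$, choose finitely many disjoint finite intervals $\{I^m_i\}_{i=1}^{k_m}$ with
\[
\sum_i \Big(\sum_{k\in I^m_i}x(k)\Big)^2 > \|x\|^2-\tfrac1m .
\]
We may assume two things about these partitions.
\begin{itemize}
\item Each $I^m_i$ has nonzero sum. Intervals with zero sum contribute nothing and can be discarded.
\item The intervals are ordered so that $\max I^m_i<\min I^m_{i+1}$.
\end{itemize}

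\textbf{Step 2 (extracting a limit partition).} Encode each partition by its endpoints. For each fixed $i$, the sequence $(\min I^m_i,\max I^m_i)_m$ lies in $(\N\cup\{\infty\})^2$, which is compact. By a diagonal argument we pass to a subsequence along which, for every $i$, both endpoints converge (possibly to $\infty$). The limits define disjoint intervals $I_i$: finite, infinite, or empty (when the left endpoint escapes to $\infty$). Let $F$ be the initial segment of indices for which $I_i$ is nonempty. Disjointness and order are preserved because the endpoint inequalities pass to the limit.

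\textbf{Step 3 (convergence of the partial sums).} For each $i$, the sum $\sum_{k\in I^m_i}x(k)$ converges to $\sum_{k\in I_i}x(k)$. This uses that for $x\in J$ the tails satisfy $\sum_{k\ge n}x(k)\to0$; in fact $\sum_k x(k)$ converges, since $x$ lies in the closed span of a basis on which the functional $I^*_\infty$ is bounded. When the left endpoint escapes, the sum tends to $0$.

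\textbf{Step 4 (no loss of mass) — the main obstacle.} Fatou's lemma only gives
\[
\sum_{i\in F}\Big(\sum_{I_i}x\Big)^2\le \|x\|^2 ,
\]
which is the wrong direction; we need equality. The idea is to control the energy carried by intervals that escape to infinity. Fix $N$ and let $\e_N=\|x\|_{[N,\infty)}$ denote the $J$-norm of the tail $\sum_{k\ge N}x(k)e_k$; this tends to $0$ because the basis is a basis and $x=\lim_n$ of its partial sums. Choose $i_0$ such that every $I_i$ with $i>i_0$ lies beyond $N$, or the $I_i$ with $i\le i_0$ exhaust $F$.
\begin{itemize}
\item For large $m$, the intervals $I^m_i$ with $i>i_0$ start beyond $N$, except possibly one interval straddling $N$. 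They therefore contribute at most $\e_N^2$ plus the straddling term.
\item The straddling interval has sum equal to (its part below $N$) plus (its part $\ge N$). The part $\ge N$ is bounded by $\e_N$. The part below $N$ is a fixed finite interval ending at $N-1$; in the limit it is accounted for within $I_{i_0+1}$, or it merges into an infinite interval, where the tail again contributes only $\e_N$.
\end{itemize}
Combining these, for large $m$,
\[
\|x\|^2-\tfrac1m\le \sum_{i\le i_0+1}\Big(\sum_{I^m_i}x\Big)^2 + O(\e_N),
\]
and the $O(\e_N)$ term also absorbs the cross terms. Letting $m\to\infty$ and then $N\to\infty$ yields
\[
\|x\|^2\le \sum_{i\in F}\Big(\sum_{I_i}x\Big)^2 .
\]

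\textbf{Conclusion.} Together with the reverse inequality from Fatou, this gives equality, so $\{I_i\}_{i\in F}$ is an $x$-norming partition. Note that the reverse inequality needs the definition of $\|x\|$ to allow infinite intervals and countably many of them; this follows by approximating each $\sum_{I_i}x$ with finite truncations.
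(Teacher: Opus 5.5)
Your proof is correct. The paper gives no argument of its own here; it quotes the statement from \cite{AG}. So your proof is a genuinely different, self-contained route.

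You encode ordered families of intervals by their endpoints in the compact space $((\N\cup\{\infty\})^2)^{\N}$. You then show that $\Phi(P)=\sum_{I\in P}\bigl(\sum_{k\in I}x(k)\bigr)^2$ passes to the limit along the extracted subsequence. The only analytic input is the tail control $\e_N=\bigl\|\sum_{k\ge N}x(k)e_k\bigr\|\to0$, which comes from $(e_n)$ being a basis of $J$. Hence the supremum defining $\|x\|^2$ is attained, which is exactly the claim. What this buys over the citation is independence from \cite{AG}, and it makes clear that the existence of norming partitions is a compactness phenomenon driven by tail control.

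Step 4 is more complicated than it needs to be, and the straddling discussion is mis-indexed. An interval crossing $N$ is $I^m_{i_0}$, not one with index $>i_0$, and its limit is $I_{i_0}$, not $I_{i_0+1}$. Step 3 already gives $\sum_{k\in I^m_i}x(k)\to\sum_{k\in I_i}x(k)$ for each fixed $i$, including when the right endpoint escapes. So all you need is the following:
\begin{itemize}
\item Let $a_i=\lim_m\min I^m_i\in\N\cup\{\infty\}$, and let $i_0$ be the number of $i$ with $a_i<N$.
\item For large $m$, every $I^m_i$ with $i>i_0$ lies in $[N,\infty)$. This holds uniformly in $i$, because $\min I^m_i$ increases with $i$ (this is where your ordering assumption is used) and $\min I^m_{i_0+1}\to a_{i_0+1}\ge N$.
\item These intervals therefore contribute at most $\e_N^2$, so $\|x\|^2-\frac1m\le\sum_{i\le i_0}\bigl(\sum_{k\in I^m_i}x(k)\bigr)^2+\e_N^2$.
\end{itemize}
Letting $m\to\infty$ and then $N\to\infty$ gives $\|x\|^2\le\Phi(P)$, with no cross terms to absorb.
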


The next result is known. We give a proof for the sake of completeness.

\begin{lemma}\label{L6}
Let $(x_n)_{n\in \N}$ be a block sequence in $J$ and $\epsilon > 0$. Assume that $I_{\infty}^*(x_n) = 0$ and there exists an $\alpha> 0$ such that $\big| \|x_n\| - \alpha\big| < \alpha \epsilon$ for all $n\in \N$. Then $(x_n)_{n\in \N}$ is equivalent to the usual basis of $\ell_2$. More precisely,
\begin{equation}\label{eqn: eq.2}
(1 - \epsilon) \alpha \Big(\sum_{n=1}^\infty \lambda_n^2\Big)^{1/2} \le \Big\|\sum_{n=1}^\infty \lambda_n x_n\Big\| \le \sqrt{2} (1 + \epsilon) \alpha \Big(\sum_{n=1}^\infty \lambda_n^2\Big)^{1/2}.
\end{equation}
\end{lemma}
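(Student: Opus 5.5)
Both inequalities in \eqref{eqn: eq.2} will be obtained directly from the definition \eqref{eq:Jnorm} of the norm, with the lower bound coming from a concatenation of norming partitions and the upper bound from splitting an arbitrary family of intervals according to how each interval meets the supports of the $x_n$. By homogeneity it suffices to treat finitely supported $(\lambda_n)$.

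\emph{Lower estimate.} For each $n$, Proposition \ref{AG1} gives an $x_n$-norming partition. Restricting each interval to the support interval $\ran(x_n)=[p_n,q_n]$ does not change the sums, so we may assume every interval of the partition lies inside $[p_n,q_n]$. Since $(x_n)$ is a block sequence, these intervals are pairwise disjoint across different $n$. Their union is then an admissible family for $x=\sum_n\lambda_n x_n$, and it gives
\[
\|x\|^2\ge\sum_n\lambda_n^2\|x_n\|^2\ge (1-\epsilon)^2\alpha^2\sum_n\lambda_n^2 .
\]

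\emph{Upper estimate.} Fix disjoint intervals $I_1,\dots,I_m$. An interval $I_i$ that meets the support of only one $x_n$ contributes $\lambda_n^2 (I_i\cap[p_n,q_n])^*(x_n)^2$. Intervals of this kind belonging to a fixed $n$ are disjoint, so their total contribution is at most $\lambda_n^2\|x_n\|^2$. An interval meeting the supports of several $x_n$, say indices $a<\dots<b$, contains the full supports of $x_{a+1},\dots,x_{b-1}$. Because $I_\infty^*(x_n)=0$, those middle blocks contribute nothing, and the sum reduces to $\lambda_a\,t_a^*(x_a)+\lambda_b\,s_b^*(x_b)$. Here $t_a$ is a final piece of $\ran(x_a)$ and $s_b$ is an initial piece of $\ran(x_b)$. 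Using $(u+v)^2\le 2u^2+2v^2$, each such interval costs at most $2\lambda_a^2 t_a^*(x_a)^2+2\lambda_b^2 s_b^*(x_b)^2$.

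For a fixed $n$, the pieces charged to $x_n$ are at most one initial piece, at most one final piece, and the interior pieces. These are pairwise disjoint subintervals of $\ran(x_n)$, so the total charge to $x_n$ is at most $2\lambda_n^2\|x_n\|^2$. Summing over $n$ gives
\[
\|x\|^2\le 2\sum_n\lambda_n^2\|x_n\|^2\le 2(1+\epsilon)^2\alpha^2\sum_n\lambda_n^2 .
\]

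The main obstacle is the bookkeeping in the upper bound. The step needing care is showing that each $x_n$ is charged only through pairwise disjoint subintervals of its range, so that the factor is exactly $2$. It is also the place where the hypothesis $I_\infty^*(x_n)=0$ is used, to kill the fully covered middle blocks.
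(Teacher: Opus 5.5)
Your proof is correct and follows essentially the same route as the paper. For the lower bound you both concatenate $x_n$-norming partitions. For the upper bound you both use $I_\infty^*(x_n)=0$ to reduce each interval to its two end pieces, apply $(u+v)^2\le 2u^2+2v^2$, and charge each $x_n$ only through pairwise disjoint subintervals of $\ran(x_n)$; the only cosmetic differences are that you bound an arbitrary finite family of intervals directly where the paper splits an $x$-norming partition of $x=\sum_n\lambda_n x_n$, and that you spell out the disjointness bookkeeping the paper leaves implicit.
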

\begin{proof}
For the first inequality, we choose a partition $\{I_i\}_{i\in F}$ such that for every $i\in F$ there exists a (unique) $n\in \N$ with $I_i \subset ran(x_n)$ and also for all $n\in \N$ the family $\{I_i : I_i \subset \ran(x_n)\}$ is an $x_n$-norming partition. Then for $\{\lambda_n\}_{n\in\N}$ in $\R$ we have
\begin{equation}\label{eqn: eq.3}
\Big\| \sum_{n=1}^\infty \lambda_n x_n \Big\| \geq \Big( \sum_{i \in F} I_i^* \Big( \sum_{n=1}^\infty \lambda_n x_n\Big)^2 \Big)^{1/2} > (1-\epsilon) \alpha \Big(\sum_{n=1}^\infty \lambda_n^2 \Big)^{1/2}.
\end{equation}
We pass to the second inequality. Since $I_{\infty}^*(x_n) = 0$ for all $n\in \N$, given $\{\lambda_n \}_{n\in \N}$ and an interval $I$,
\begin{equation}\label{eqn: eq.1}
\Big( I^*\Big( \sum_{n=1}^\infty \lambda_n x_n\Big) \Big)^2 \leq 2\Big( \lambda_{n_1}^2 I_1^*(x_{n_1})^2 + \lambda_{n_2}^2 I_2^*(x_{n_2})^2 \Big),
\end{equation}
where $n_1= \min\{ n : \supp(x_n) \cap I \neq \emptyset\}$, $n_2 = \max\{ n : \supp(x_n) \cap I \neq \emptyset\}$, $I_1 = \ran(x_{n_1}) \cap I$ and $I_2 = \ran(x_{n_2}) \cap I$.

Let $x = \sum_{n=1}^\infty \lambda_n x_n$ and let $\{ I_i\}_{i\in F}$ be an $x$-norming partition. By the previous observation, for every $I_i$ intersecting the support of more than one $x_n$, there are $I_i^1, I_i^2$ such that $I_i, I_i^1, I_i^2$ satisfy inequality (\ref{eqn: eq.1}) and each $I_i^1, I_i^2$ intersects the support of only one $x_n$.
Therefore there exists a partition $\{J_j\}_{j\in G}$ such that either $J_j = I_i$ for some $i\in F$ or $J_j = I_i^1$ or $J_j = I_i^2$ for some $i\in F$. For each $n\in \N$, we set
\[ G_n = \{ j\in G : J_j \cap \supp(x_n) \neq \emptyset \} \]
Clearly, $\{G_n\}_{n\in \N}$ are pairwise disjoint and from \ref{eqn: eq.1} we have:
\begin{equation}\label{eqn: eq.4}
\|x\|^2 =  \sum_{i\in F} \Big( \sum_{k\in I_i} x(k) \Big)^2 \leq 2 \sum_{n=1}^\infty \sum_{j\in G_n} \lambda_n^2(J_j^*(x_n))^2  \leq 2(1+ \epsilon)^2 \alpha^2 \sum_{n=1}^\infty \lambda_n^2.
\end{equation}
Inequalities \ref{eqn: eq.3} and \ref{eqn: eq.4} complete the proof.
\end{proof}

Applying a sliding hump argument and Lemma \ref{L6}, we derive the following result.

\begin{proposition}\label{P5}
Every seminormalized sequence $(x_n)_n$ in $J$ such that $I_\infty^*(x_n) \to 0$ and $x_n(k) \to 0$ for every $k \in \N$ has a subsequence $(x_{n_k})_k$ equivalent to the basis of $\ell_2$.

In particular, every weakly null $(x_n)_n$ in $J$ with $\lim_n \|x_n\| = \alpha > 0$ has an $\ell_2$-subsequence $(x_{n_k})_k$, and if for some $0 < \epsilon < 1$, we have
\begin{equation}\label{eqn:eq.21}
\sum_{k=1}^\infty \Big| I_\infty^*(x_{n_k})\Big| < \alpha \frac{\epsilon}{4} \quad \text{and} \quad
\Big| \|x_{n_k}\| - \alpha \Big| < \alpha \frac{\epsilon}{4}
\end{equation}
then
\begin{equation}\label{eqn:eq.5}
(1 -\epsilon) \alpha\Big(\sum_{k=1}^\infty \lambda_k^2\Big)^{1/2} \le \Big\|\sum_{k=1}^\infty \lambda_k x_{n_k}\Big\| \le (1+\epsilon)\sqrt{2} \alpha \Big(\sum_{k=1}^\infty \lambda_k^2\Big)^{1/2}.
\end{equation}
\end{proposition}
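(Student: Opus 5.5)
We reduce the statement to Lemma \ref{L6} by a sliding hump argument followed by a small correction that kills the value of $I_\infty^*$ on each block.

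\emph{Step 1: extraction of an almost block sequence.} Since $(x_n)$ is seminormalized and $x_n(k)\to 0$ for every $k$, a standard gliding hump argument gives a subsequence $(x_{n_k})_k$ and a block sequence $(y_k)_k$ of the basis with $\sum_k\|x_{n_k}-y_k\|<\delta$, for any prescribed $\delta>0$. At the $k$-th stage we choose $n_k$ so large that $x_{n_k}$ is tiny on $\bigcup_{j<k}\ran(y_j)$, and then we truncate its tail, using that $(e_n)$ is a basis and the partial sum projections have norm one. Passing to a further subsequence, we may assume $\|y_k\|\to\alpha>0$, with $|\|y_k\|-\alpha|<\alpha\epsilon/8$. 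Since $|I_\infty^*(y_k)-I_\infty^*(x_{n_k})|\le\|y_k-x_{n_k}\|$, we also get $I_\infty^*(y_k)\to 0$.

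\emph{Step 2: correction to $I_\infty^*=0$.} Choose a further subsequence with $\sum_k|I_\infty^*(y_k)|<\alpha\epsilon/8$. Set $z_k=y_k-I_\infty^*(y_k)e_{m_k}$, where $m_k=\max\supp(y_k)$. Then $(z_k)$ is still a block sequence, $I_\infty^*(z_k)=0$, and $\|z_k-y_k\|\le|I_\infty^*(y_k)|$. Hence $|\|z_k\|-\alpha|<\alpha\epsilon/4$. Lemma \ref{L6} now says that $(z_k)$ is equivalent to the $\ell_2$ basis, with the constants of \eqref{eqn: eq.2}.

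\emph{Step 3: perturbation back.} We have $\sum_k\|x_{n_k}-z_k\|$ as small as we wish. Moreover, $(z_k)$ is a block basic sequence with basis constant at most $1$, since the basis of $J$ is bimonotone. The standard small perturbation principle, together with Cauchy--Schwarz bounding the error by $(\sum\lambda_k^2)^{1/2}\sum_k\|x_{n_k}-z_k\|$, then shows that $(x_{n_k})$ is equivalent to $(z_k)$, hence to the $\ell_2$ basis.

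For the quantitative ``in particular'' part, a weakly null sequence automatically satisfies both $x_n(k)\to 0$ and $I_\infty^*(x_n)\to0$. Under hypothesis \eqref{eqn:eq.21} we repeat Steps 1 and 2 with errors chosen as small multiples of $\alpha\epsilon$. The lower bound $(1-\epsilon)\alpha$ comes from the lower estimate in Lemma \ref{L6} applied to $(z_k)$, minus the perturbation error. The upper bound $\sqrt2(1+\epsilon)\alpha$ comes from the upper estimate plus the perturbation error.

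The main obstacle is the bookkeeping of constants. The summable error $\sum|I_\infty^*(x_{n_k})|$ enters linearly, not in $\ell_2$, so it must be absorbed by Cauchy--Schwarz (or crudely by $\sup|\lambda_k|\le(\sum\lambda_k^2)^{1/2}$) to keep the stated constants $1\pm\epsilon$. To leave room for this, I would apply Lemma \ref{L6} with $\epsilon/2$ in place of $\epsilon$.
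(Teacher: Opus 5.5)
Your proposal is correct and follows essentially the same route as the paper. The paper also subtracts $I_\infty^*(x_{n_k})e_{m_k}$ at a point $m_k$ of the support to obtain blocks with $I_\infty^*=0$, applies Lemma \ref{L6} with $\epsilon/2$, and perturbs back by the triangle inequality, with the error controlled by $\sup_k|\lambda_k|\sum_k\|x_{n_k}-y_k\|$. The only difference is that you make the gliding hump step explicit, whereas the paper tacitly treats $(x_{n_k})$ as a block sequence; passing to a further subsequence there is harmless, since the hypotheses \eqref{eqn:eq.21} are inherited by subsequences.
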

\begin{proof}
For every $k\in \N$, we choose $m_k \in \supp(x_{n_k})$ and we set $y_k = x_{n_k} - \alpha_{n_k} e_{m_k}$.
It is easy to check that
\[ I_\infty^*(y_{k}) = 0 \quad \lim_k \|y_{k}\| = \alpha \quad \text{and} \quad \big|\|y_k\| - \alpha\big| < \alpha \frac{\epsilon}{2} \]
Therefore, Lemma \ref{L6} yields
\begin{equation}\label{eqn: eq.19}
(1 - \frac{\epsilon}{2}) \alpha \Big(\sum_{k=1}^\infty \lambda_k^2\Big)^{1/2} \le \Big\|\sum_{k=1}^\infty \lambda_k y_k\Big\| \le \sqrt{2} \Big(1 +\frac{\epsilon}{2}\Big) \alpha \Big(\sum_{k=1}^\infty \lambda_k^2\Big)^{1/2}
\end{equation}
Since
$\sum_{k=1}^\infty \|x_{n_k} - y_k\| < \epsilon/4$,
the triangle inequality gives
\begin{equation}\label{eqn: eq.20}
(1 - \epsilon) \alpha \Big(\sum_{k=1}^\infty \lambda_k^2\Big)^{1/2} \le \Big\|\sum_{k=1}^\infty \lambda_k x_{n_k}\Big\| \le \sqrt{2} (1 + \epsilon) \alpha \Big(\sum_{k=1}^\infty \lambda_k^2\Big)^{1/2},
\end{equation}
and the proof is complete.
\end{proof}

We pass to study the non-trivial weakly Cauchy sequences in $J$. First we show that each of these sequences has a subsequence equivalent to the basis of $J$.

\begin{proposition}\label{P10}
Let $(x_n)_n$ be a block sequence in $B_J$ such that
\[ \lim_n I_\infty^*(x_n) = \alpha \neq 0 \quad \text{and} \quad \lim_n \|x_n\| =\beta \geq 0,\]
and let also $0< \epsilon < 1$. Then there exists a subsequence $(x_{n_k})_k$ such that for every $(\lambda_k)_k$ in $\R$ we have
\begin{equation}\label{eqn:eq.6}
(1-\epsilon) |\alpha|\Big\| \sum_{k=1}^\infty \lambda_k e_k \Big\| \leq \Big\| \sum_{k=1}^\infty \lambda_k x_{n_k} \Big\| \leq 3(1+\epsilon)(|\alpha|+ \beta)\Big\| \sum_{k=1}^\infty \lambda_k e_k \Big\|.
\end{equation}
Hence $(x_{n_k})_k$ is equivalent to the basis $(e_k)_k$ of $J$.
\end{proposition}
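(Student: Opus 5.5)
The plan is to split each $x_n$ into a part carrying the sum $\alpha$ and a part with zero sum. Choose $m_n\in\ran(x_n)$ and set
\[ x_n = \alpha_n e_{m_n} + y_n, \qquad \alpha_n = I_\infty^*(x_n), \qquad y_n = x_n - \alpha_n e_{m_n}, \]
so that $I_\infty^*(y_n)=0$, $\alpha_n\to\alpha$ and $\|y_n\|\le \beta+|\alpha_n|$. Pass to a subsequence with $\sum_k|\alpha_{n_k}-\alpha|$ small, chosen in terms of $\epsilon$. Since $(e_{m_k})_k$ with $m_1<m_2<\dots$ is isometrically equivalent to $(e_k)_k$ in $J$, which is spreading, the vector $\sum_k\lambda_k\alpha_{n_k}e_{m_{n_k}}$ differs from $\alpha\sum_k\lambda_k e_{m_{n_k}}$ by at most $\sup_k|\lambda_k|\sum_k|\alpha_{n_k}-\alpha|$. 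Using $\sup|\lambda_k|\le \|\sum\lambda_k e_k\|_J$, this error is at most $\epsilon|\alpha|/10\cdot\|\sum\lambda_ke_k\|$.

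\textbf{Upper estimate.} Let $\{I_i\}$ be a norming partition of $x=\sum\lambda_k x_{n_k}$ (Proposition \ref{AG1}). For each $I_i$ the sum $I_i^*(x)$ splits as follows.
\begin{enumerate}
\item[(a)] Blocks fully contained in $I_i$ contribute $\sum\lambda_k\alpha_{n_k}$ over a range of $k$, which is an interval functional applied to $\sum\lambda_k\alpha_{n_k}e_k$.
\item[(b)] At most two partially cut blocks at the ends contribute $\lambda_k J^*(x_{n_k})$ with $|J^*(x_{n_k})|\le\|x_{n_k}\|$.
\end{enumerate}
So $|I_i^*(x)|\le |A_i|+|B_i|+|C_i|$, where $A_i$ is the interval-sum part. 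Squaring with the factor $3$ gives $\|x\|^2\le 3\big(\sum_i A_i^2+\sum_i B_i^2+\sum_i C_i^2\big)$. The intervals in (a) are disjoint, so $\sum A_i^2\le\|\sum\lambda_k\alpha_{n_k}e_k\|^2\le (1+\epsilon)^2|\alpha|^2\|\sum\lambda_ke_k\|^2$. Each block is an endpoint block for at most two of the $I_i$, so $\sum B_i^2+\sum C_i^2\le 2\max\|x_{n_k}\|^2\sum\lambda_k^2$. This is not controlled by $\|\sum\lambda_ke_k\|_J$, because $J$ does not dominate $\ell_2$.

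This is the main obstacle, and I would fix it by working with differences. Write $\lambda_k e_{k}$-type tails in terms of the $J$-norm via the equivalent norm $\|\sum\lambda_ke_k\|\sim(\sum(\lambda_{k+1}-\lambda_k)^2)^{1/2}$-style estimates. Concretely, the cut piece $\lambda_k J^*(x_{n_k})$ is rewritten using the zero-sum parts: $J^*(x_{n_k}) = J^*(y_{n_k}) + \alpha_{n_k}[m_{n_k}\in J]$. The indicator part is absorbed into the interval sums of (a) by moving the cut point relative to $m_{n_k}$. The $y$-parts are then handled by Lemma \ref{L6}/Proposition \ref{P5}, since after refining the subsequence $(y_{n_k})$ is $\ell_2$-like with constant $\sqrt2(1+\epsilon)\beta'$. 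The catch is that this requires the coefficients on $y$-pieces at cuts to be bounded by $\ell_2$ of the differences. I would handle this by noting that each cut involves only a single $\lambda_k$, and the number of cuts per block is at most two. So I would bound the family $\{\lambda_k: k \text{ cut}\}$ by the $J$-norm through alternately choosing the intervals of (a) to end at $m$'s. This is the step I expect to need the most care, producing the constant $3(1+\epsilon)(|\alpha|+\beta)$.

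\textbf{Lower estimate.} Take a norming partition $\{K_j\}$ of $\sum\lambda_ke_k$ in $J$ and map each $K_j=[p,q]$ to the interval running from $\min\ran(x_{n_p})$ to $\max\ran(x_{n_q})$. These intervals are disjoint and give $I^*(x)=\sum_{k\in K_j}\lambda_k\alpha_{n_k}$. Hence $\|x\|\ge\|\sum\lambda_k\alpha_{n_k}e_k\|\ge(1-\epsilon)|\alpha|\|\sum\lambda_ke_k\|$ by the perturbation bound above.
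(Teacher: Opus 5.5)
Your lower estimate is correct and is essentially the paper's argument: an index interval is sent to the smallest interval of $\N$ containing the corresponding blocks. The gap is in the upper estimate. You reach
\[
\|x\|^2\le 3\Big(\Big\|\sum_k\lambda_k\alpha_{n_k}e_k\Big\|^2+\sup_k\|x_{n_k}\|^2\sum_k\lambda_k^2\Big),
\]
and then discard it on the grounds that ``$J$ does not dominate $\ell_2$''. In its place you offer a sketch (rewriting in terms of $\lambda_{k+1}-\lambda_k$, ``alternately choosing the intervals of (a) to end at $m$'s'') that is never carried out. So, as written, the right-hand inequality is not proved.

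Moreover, the premise is false here. The difference norm $(\sum_k(\lambda_{k+1}-\lambda_k)^2)^{1/2}$ belongs to the shrinking-basis model of $J$. In this paper $J$ carries the norm \eqref{eq:Jnorm} with its boundedly complete basis. Taking all singletons $\{k\}$ as the disjoint intervals gives $\|\sum_k\lambda_ke_k\|_J\ge(\sum_k\lambda_k^2)^{1/2}$. This is the same trick you already used to get $\sup_k|\lambda_k|\le\|\sum_k\lambda_ke_k\|_J$. The paper uses exactly this domination in its last step. There it bounds $\|\sum_k\lambda_kv_{n_k}\|$ by an $\ell_2$ sum via Proposition \ref{P5}, where $v_n=x_n-\alpha e_{m(n)}$ with $m(n)$ in the gaps between supports, and uses $\|\sum_k\lambda_k\alpha e_{m(n_k)}\|=|\alpha|\|\sum_k\lambda_ke_k\|$.

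Your bookkeeping for the cut pieces also needs correcting. A single block can be cut by arbitrarily many $I_i$, since several $I_i$ may lie inside $\ran(x_{n_k})$, so it is not an endpoint block for at most two of them. What is true is this: for fixed $k$, the sets $I_i\cap\ran(x_{n_k})$, over those $I_i$ cutting the $k$-th block, are pairwise disjoint intervals. Their contributions therefore add up to at most $\lambda_k^2\|x_{n_k}\|^2$, giving $\sum_i(B_i^2+C_i^2)\le\sup_k\|x_{n_k}\|^2\sum_k\lambda_k^2$.

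Now pass to a subsequence with $\|x_{n_k}\|<(1+\epsilon)\beta$; this is possible since $\beta\ge|\alpha|>0$. Combined with the $\ell_2$-domination, your direct estimate already gives $\|x\|\le\sqrt3(1+\epsilon)(\alpha^2+\beta^2)^{1/2}\|\sum_k\lambda_ke_k\|_J\le3(1+\epsilon)(|\alpha|+\beta)\|\sum_k\lambda_ke_k\|_J$. This needs neither Proposition \ref{P5} nor any argument with differences. So the repair is short, but as submitted the upper bound is missing and rests on a false belief about the norm of $J$.
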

\begin{proof}
We assume that $(x_n)_n$ is a skipped block sequence, and for every $n\in \N$ we choose $m(n) \in \N$ with $\supp(x_n) < m(n) < \supp(x_{n+1})$, we set $v_n = x_n - \alpha e_{m(n)}$, and we assume that
$\lim_n \|v_n\| = \gamma \leq |\alpha| + \beta$.

We will show that a subsequence $(x_{n_k})_k$ such that
\begin{equation}\label{eqn:eq.7}
\sum_{k=1}^\infty \Big| I_\infty^*(x_{n_k}) - \alpha \Big| < |\alpha| \frac{\epsilon}{4} \quad \text{and} \quad
\Big| \|v_{n_k}\| - \gamma \Big| \leq \gamma\frac{\epsilon}{4} \leq (|\alpha| + \beta) \frac{\epsilon}{4}
\end{equation}
satisfies inequality (\ref{eqn:eq.6}). We start with the first part.

Let $(\lambda_k)_k$ be a sequence in $\R$ and fix disjoint intervals $I_1,\ldots,I_N$ of $\N$. For $1\leq m\leq N$, let
$L_m$ as the minimum interval such that $k\in I_m$, if $\supp(v_{n_k}) \subset L_m$. We have
\begin{align*}
\Big\|\sum_{k=1}^\infty\lambda_k x_{n_k}\Big\| &\geq \Big(\sum_{m=1}^N\Big( L_m^*\Big( \sum_{k=1}^\infty \lambda_k x_{n_k}\Big) \Big)^2\Big)^{1/2}\\
&\geq \Big(\sum_{m=1}^N\Big( L_m^* \Big(\sum_{k = 1}^\infty \lambda_k \alpha e_{m(n_k)}\Big)\Big)^2\Big)^{1/2}\\
&- \Big(\sum_{m=1}^N\Big(  L_m^*\Big(\sum_{k=1}^\infty \lambda_k (x_{n_k}-\alpha e_{m(n_k)}) \Big)\Big)^2\Big)^{1/2}\\
&\geq |\alpha|\Big(\sum_{m=1}^N\Big(\sum_{k\in I_m}\lambda_k\Big)^2\Big)^{1/2} - \sum_{k=1}^\infty|\lambda_k|\Big|I^*_\infty(x_{n_k}) - \alpha\Big|\\
&\geq |\alpha|\Big(\sum_{m=1}^N\Big(\sum_{k\in I_m}\lambda_k\Big)^2\Big)^{1/2} -\frac{\epsilon}{4}|\alpha|\|(\lambda_k)_k\|_\infty,
\end{align*}
Taking a supremum over all disjoint intervals $(I_m)_{m=1}^N$ of $\N$ implies
\[ \Big\| \sum_{k=1}^\infty \lambda_k x_{n_k}\Big\| \geq |\alpha| \Big\| \sum_{k=1}^\infty \lambda_k e_{k}\Big\|  - \frac{\epsilon}{4}|\alpha|\|(\lambda_k)_k\|_\infty\geq (1-\epsilon)|\alpha|\Big\| \sum_{k=1}^\infty \lambda_k e_{k}\Big\|. \]
We pass to the second part of inequality (\ref{eqn:eq.6}).
We first observe that the sequence $(v_{n_k})_k$ satisfies the assumptions of Proposition \ref{P5}, hence from (\ref{eqn:eq.5}) we get
\begin{equation}\label{eqn:eq.22}
\Big\|\sum_{k=1}^\infty \lambda_k v_{n_k}\Big\| \le (1+\epsilon)\sqrt{2} \gamma \Big(\sum_{k=1}^\infty \lambda_k^2\Big)^{1/2} \leq (1+\epsilon)\sqrt{2} (|\alpha| + \beta) \Big(\sum_{k=1}^\infty \lambda_k^2\Big)^{1/2}.
\end{equation}
Moreover,
\begin{equation}\label{eqn:eq.9}
\Big\| \sum_{k=1}^\infty \lambda_k \alpha e_{m(n_k)}\Big\| = |\alpha| \Big\| \sum_{k=1}^\infty \lambda_k e_k \Big\|.
\end{equation}

Since $x_{n_k} = v_{n_k} + \alpha_{n_k}e_{m(n_k)}$, inequalities (\ref{eqn:eq.22}) and (\ref{eqn:eq.9}), the triangle inequality, and the fact that the norm in the basis $(e_n)_n$ dominates the $\ell_2$ norm, we derive
\[ \Big\| \sum_{k=1}^\infty \lambda_k x_{n_k}\Big\| < 3(1+\epsilon)(|\alpha| + \beta) \Big\| \sum_{k=1}^\infty \lambda_k e_k \Big\|, \]
completing the proof of the second part of inequality (\ref{eqn:eq.6}).
\end{proof}

For the general result we need the following fact.

\begin{proposition}\label{P7}
Let $X$ be a Banach space with a boundedly complete basis $(e_n)_n$, and let $(z_n)_n$ be a sequence in $X$ with $z_n \xrightarrow{w^*} z^{**} \in B_{X^{**}}\setminus B_X$. Then there exists a subsequence $(z_{n_k})_k$ of $(z_n)_n$ equivalent to a block sequence $(y_k)_k$ of the basis $(e_n)_n$.

Moreover, if every subsequence of $(y_k)_{k}$ has a further subsequence generating a complemented subspace of $X$ then the subsequence $(z_{n_k})_k$ can be chosen to satisfy the same property.
\end{proposition}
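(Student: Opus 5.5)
The plan is a gliding hump perturbation. Since $(e_n)_n$ is boundedly complete, $X$ is the dual of $[e_n^*]$ and the $w^*$-topology of $X^{**}$ restricted to $X$ agrees with the $w^*$-topology induced by $[e_n^*]$. Let $P_m$ denote the basis projections onto $[e_1,\dots,e_m]$. Because $z_n \xrightarrow{w^*} z^{**}$, for every $k$ the sequence $e_k^*(z_n)$ converges, say to $e_k^*(z^{**}) =: a_k$.

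\textbf{Step 1: the tails are uniformly large.} By bounded completeness, $\sup_m\|\sum_{k\le m} a_k e_k\| \le \|z^{**}\|\cdot\sup_m\|P_m\|$. Hence $z = \sum_k a_k e_k$ converges in $X$, and $z$ is the $w^*$-limit of $z_n$ in the dual pairing with $[e_n^*]$. Set $u_n = z_n - z$. Then $e_k^*(u_n)\to 0$ for each $k$, and $u_n \xrightarrow{w^*} z^{**}-z \neq 0$, since $z^{**}\notin X$ while $z\in X$. It follows that $\liminf_n \|u_n\|>0$. Indeed, if a subsequence of $u_n$ tended to $0$ in norm, its $w^*$-limit $z^{**}-z$ would be $0$.

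\textbf{Step 2: gliding hump.} Choose $n_1<n_2<\cdots$ and $0=p_0<p_1<\cdots$ such that $\|P_{p_{k-1}}u_{n_k}\|<\delta_k$ and $\|(I-P_{p_k})u_{n_k}\|<\delta_k$, with $\sum_k \delta_k$ small relative to the basis constant and to $\inf_k\|u_{n_k}\|$. Set $y_k=(P_{p_k}-P_{p_{k-1}})u_{n_k}$. Then $(y_k)$ is a seminormalized block sequence, and by the principle of small perturbations $(u_{n_k})$ is basic and equivalent to $(y_k)$.

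\textbf{Step 3: pass back from $u$ to $z$.} This is the main obstacle, because $z_{n_k}=u_{n_k}+z$ carries the fixed vector $z$. If $z=0$ we are done. Otherwise I would use that $z^{**}-z\neq0$ is the $w^*$-limit of $u_{n_k}$, so $\sum_k \lambda_k u_{n_k}$ cannot produce $z$ in a controlled way. Concretely, the closed span $[u_{n_k}]$ does not contain $z$ after passing to a tail: since $e_j^*(u_{n_k})\to0$, the vectors $u_{n_k}$ are almost supported beyond $p_{k-1}$, whereas $z$ has nonzero coordinates in a fixed finite initial part up to small error. One also has to show that $z$ is bounded away from $[u_{n_k}]_{k\ge k_0}$ uniformly. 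Then $(z_{n_k}) = (u_{n_k}+z)$ is equivalent to $(u_{n_k})$. I would prove this by checking that the functional $f$ on $[u_{n_k}]$, $f(\sum\lambda_k u_{n_k})=\sum\lambda_k$, is bounded, so that $\sum\lambda_k z_{n_k}=\sum\lambda_k u_{n_k}+f(\cdot)z$ is an isomorphic perturbation (the map $I+f\otimes z$, invertible when $f(z)\neq -1$, or after extending $f$ suitably). The boundedness of $f$ should come from $w^*$-convergence to $z^{**}-z\ne0$. Choose $g$ in the predual $[e_n^*]$ with $(z^{**}-z)(g)=1$. Then $g(u_{n_k})\to1$, so after passing to a subsequence with $\sum_k|g(u_{n_k})-1|$ small, $f$ is approximated by $g$ up to an $\ell_\infty$-type error. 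If that error is not controlled, I would instead work with the shifted sequence $u_{n_k}-u_{n_{k-1}}$. The equivalence with $(y_k)$ then follows from Step 2.

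\textbf{The ``moreover'' part.} Suppose $[y_{k}]_{k\in L}$ is complemented for some infinite $L$, with projection $Q$. The small perturbation from Step 2 gives an automorphism $U$ of $X$, close to the identity, with $U y_k=u_{n_k}$. Hence $[u_{n_k}]_{k\in L}$ is complemented via $UQU^{-1}$. To transfer complementation from $[u_{n_k}]$ to $[z_{n_k}]=(I+f\otimes z)$-image, extend $g$ as above to all of $X$ (it already lives on $X$). The operator $I+g\otimes z$ is then a finite-rank perturbation of the identity. Provided it is an isomorphism, which I would arrange by choosing $g$ with $g(z)\neq-1$, it maps complemented subspaces to complemented subspaces. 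Since it nearly agrees with $f\otimes z$ on the span, a further small perturbation finishes the argument.
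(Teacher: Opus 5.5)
There is one step that fails as written, though your overall route is the paper's: subtract the $\sigma(X,X_*)$-limit $z$, glide the hump on $u_n=z_n-z$, then absorb the shift by $z$. The failing step is in Step~3: no $g$ in the predual $X_*=[e_n^*]$ can satisfy $(z^{**}-z)(g)=1$. Indeed $(z^{**}-z)(e_k^*)=a_k-a_k=0$ for every $k$, so $z^{**}-z\in X_*^{\perp}$; this is exactly the decomposition $X^{**}=X\oplus X_*^{\perp}$. Your own Step~1 also shows $g(u_n)\to 0$ for every $g\in X_*$, not $g(u_n)\to 1$. The functional that detects $z^{**}-z$ must be taken in $X^*$, necessarily outside $X_*$.

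You are merging two roles that the paper keeps apart. The first is a finitely supported $x^*\in X_*$ with $x^*(z)=1$. The tail blocks lie in $\ker x^*$, and $X=[z]\oplus\ker x^*$ gives the lower estimate; this is the rigorous form of your unproved claim that $z$ is bounded away from $[u_{n_k}]_{k\ge k_0}$. The second is a functional $y^*\in X^*$ with $(z^{**}-z)(y^*)\neq 0$, which makes the summing functional $f$ bounded and gives the upper estimate.

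With $g\in X^*$ the argument closes, more easily than you fear. The $\ell_\infty$-type error is controlled: $|f(v)-g(v)|\le\sup_k|\lambda_k|\sum_k|1-g(u_{n_k})|$. For the seminormalized basic sequence $(u_{n_k})$ with basis constant $K$ you have $\sup_k|\lambda_k|\le 2K\|v\|/\inf_k\|u_{n_k}\|$. So no passage to differences $u_{n_k}-u_{n_{k-1}}$ is needed, and those differences would not give a subsequence of $(z_n)$ anyway. Incidentally, this summable-deviation control is also what the paper's own claim needs: $y^*(y_k)>\alpha$ alone does not give $|y^*(v)|\ge\alpha|\sum_k\lambda_k|$.

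Better still, the operator in your last paragraph settles both parts at once. Take $g\in X^*$ with $(z^{**}-z)(g)=1$ and $g(z)\neq-1$. You can arrange the second condition by adding to $g$ a suitable multiple of some $h\in X_*$ with $h(z)\neq0$, which does not change $(z^{**}-z)(g)$. Then $S=I+g\otimes z$ is an automorphism of $X$, and $\|Su_{n_k}-z_{n_k}\|=|g(u_{n_k})-1|\,\|z\|$ can be made summable and small. Hence $(z_{n_k})$ is a small perturbation of $(Su_{n_k})$, which is equivalent to $(u_{n_k})$ and hence to $(y_k)$. Complementation passes from $[y_k]_{k\in L}$ through $U$, then $S$, then a final perturbation. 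First discard finitely many indices of $L$ so that the perturbations are small relative to the norm of the given projection. In this form the separate ``bounded away'' argument is not needed at all.
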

\begin{proof}
Denoting by $(e_n^*)_n$ the sequence in $X^*$ of coefficient functionals of the basis of $X$, $X_* = [(e_n^*)_n]$ is the predual of $X$ because $(e_n)_n$ is boundedly complete.
This yields that $X^{**} = X\oplus X_*^\perp$. Hence $z^{**} = y + y^{**}$ with $y\in X$ and $y^{**} \in X_*^\perp$. If $y=0$, then for $m\in \N$ we have $\lim_n e_m^*(z_n) = 0$; therefore, by a sliding hump argument, we conclude that there is a subsequence $(z_{n_k})_k$ which is a small permutation of a block sequence $(y_k)_k$, which ends the proof.

If $y \neq 0$ then we set $x_n= z_n-y$ and observe that $\lim_n e_m^*(x_n) = 0$ for each $m\in \N$. Hence there is a subsequence $(x_{n_k})_k$ which is a small permutation of a block sequence $(y_k)_k$ and clearly the two sequences are equivalent. Also $y_k \xrightarrow{w^*} y^{**}$. We will show that there exists a $k_0$ such that $(z_{n_k})_{k\geq k_0}$ is equivalent to $(y_k)_{k \geq k_0}$, which ends the proof.

Indeed, choose $x^* \in X_*$ a finite linear combination of $(e_n^*)_n$ with $x^*(y) = 1$, and  $y^*\in B_{X^*}$ such that $y^{**}(y^*) > \alpha >0$. With no loss of generality, we can assume that for each $k\in \N$ we have $y^*(y_k) > \alpha$. Finally, denote
\[ k_0 = \min \big\{ k: \max\, \supp(x^*) < \min \supp(y_k) \big\}. \]
\begin{claim}
The sequences $(y_k)_{k\geq k_0}$ and $(y_k + y)_{k\geq k_0}$ are equivalent.
\end{claim}
Since $X= [\{y\}] \oplus \ker x^*$, there are numbers $c, C > 0$ such that for $x= \lambda y + z$
\[ c \max\{ |\lambda| \|y\|, \|z\| \} \leq \|x\| = \| \lambda y + z \| \leq C \max\{ |\lambda| \|y\|, \|z\| \}. \]
Then $v= \sum_{k=k_0}^m\lambda_k y_k$ and $w =\sum_{k=k_0}^m \lambda_k (y_k + y)$ satisfy $c \|v\| \leq \|w\|$.

In the opposite direction,
\[\|w\| \leq C \max \Big\{\Big|\sum_{k=k_0}^m \lambda_k\Big| \|y\|, \|v\| \Big\}. \]
Since $\|v\| \geq |y^*(v)| \geq |\sum_{k=k_0}^m \lambda_k| \alpha$, we conclude that there exists a $C_1 > 0$ such that $\|w\| \leq C_1 \|v\|$.
This completes the proof of the claim.

Since $(y_k)_{k\geq k_0}$ is a small permutation of $(x_{n_k})_{k\geq k_0}$, $(y_k + y)_{k\geq k_0}$ is a small permutation of $(z_{n_k})_{k\geq k_0}$, and the claim implies the equivalence of $(y_k)_{k\geq k_0}$ and $(z_{n_k})_{k\geq k_0}$.

Next we prove the second part of the statement of the proposition. As before, $z^{**} = y+ y^{**}$. If we assume that $y = 0$, as in the first part of the proof, there is a subsequence $(z_{n_k})_k$ which is a small permutation of a block sequence $(y_k)_k$ and if every subsequence of $(y_k)_k$  generates a complemented subspace of $X$ then Proposition 1.a.9 in \cite{L-T:77} completes the proof.

Assume next that $y\ne 0$. Let $x^*$ and $ k_0 $ as before with $x^*(z) = 1$, $x^*\in [ (e_l^*)_{l\le {m}}]$ and $m< \min\supp y_k$ for $k>k_0$.

\begin{claim} For every subsequence of $(y_k) _{k>k_{0}}$ the space generated by $ (y+y_k)_{k\in L}$
 is complemented in $X$.
\end{claim}

Indeed, set $W = \ker x^*$. Clearly $(y_k)_{k\in L} \subset W$ and let  $P_{W,L} $ be the restriction of the projection $P_L:  X\to [(y_k)_{k\in L}]$  to $W$. Then it is easy to see that the operator
\[ Q: X=<y> \oplus Kerx^* \to [(y+y_k)_{k\in L}] \] defined by $Q(x_1 +x_2 ) = x^*(x_1 )y+P_{W,L}(x_2)$ is a bounded projection.

Since $\| z_{n_k} -( y+y_k)\| \to 0$, Proposition 1.a.9 in \cite{L-T:77} yields the result and the proof is complete.
\end{proof}
\begin{lemma}\label{compl.J} Let $(x_n)_n$ be a block sequence in $J$ equivalent to the basis of $J$. Then the space generated by some subsequence of $(x_n)_n$ is complemented in $J$.
\end{lemma}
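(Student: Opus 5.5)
We have to show that if $(x_n)_n$ is a block sequence in $J$ equivalent to the basis $(e_k)_k$ of $J$, then some subsequence spans a complemented subspace of $J$. The plan is to construct a bounded biorthogonal sequence of block functionals $(x_n^*)_n$ in $J^*$ such that the map $P(x)=\sum_n x_n^*(x)x_{n}$ (over a subsequence) is bounded.

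\emph{Step 1: normalize the sequence.} Since $(x_n)_n$ is equivalent to $(e_k)_k$ and $\sum_k e_k$ is $w^*$-Cauchy but not convergent, the sequence $(x_n)_n$ is non-trivial weakly Cauchy. Block sequences satisfy $e_m^*(x_n)\to 0$, so the $w^*$-limit lies in $J_*^\perp$, which is spanned by the functional $I_\infty^*$. We therefore get $I_\infty^*(x_n)\to\alpha\neq0$. Passing to a subsequence, we may assume $\sum_n|I_\infty^*(x_n)-\alpha|$ is as small as we like, and that $(x_n)_n$ is skipped. We then write $x_n=\alpha e_{m(n)}+v_n$ as in the proof of Proposition \ref{P10}. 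Here $I_\infty^*(v_n)\to0$, and by Proposition \ref{P5} (after a further subsequence and a small perturbation) $(v_n)_n$ is equivalent to the $\ell_2$ basis with $I_\infty^*(v_n)=0$.

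\emph{Step 2: define the functionals.} For each $n$ let $R_n$ be the interval from $\min\supp x_n$ to $\min\supp x_{n+1}-1$. Set $x_n^*=\alpha^{-1}R_n^*$; this is a norm-bounded block functional with $x_n^*(x_m)=\delta_{n,m}\,\alpha^{-1}I_\infty^*(x_n)\approx\delta_{n,m}$. Rescaling by $I_\infty^*(x_n)^{-1}$ makes the system exactly biorthogonal. Then for $x\in J$ the operator
\[ Px=\sum_n x_n^*(x)\,x_n \]
is formally a projection onto $[x_n]$. Since $(x_n)\sim(e_k)$, boundedness of $P$ reduces to showing
\[ \Big\|\sum_n R_n^*(x)\,e_n\Big\|_J\le C\|x\|. \]
This holds because the $R_n$ are consecutive intervals. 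For disjoint intervals $I_1,\dots,I_N$ of indices $n$, the unions $\bigcup_{n\in I_m}R_n$ are disjoint intervals of $\N$, and
\[ \sum_{n\in I_m}R_n^*(x)=\Big(\bigcup_{n\in I_m}R_n\Big)^*(x). \]
Hence $\big\|\sum_n R_n^*(x)e_n\big\|_J\le\|x\|$, and this estimate actually gives norm one.

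\emph{Step 3: fix the biorthogonality error.} This is the main obstacle. The functional $R_n^*$ applied to $x_m$ for $m\neq n$ vanishes because the supports are disjoint, so the only defect is $R_n^*(x_n)=I_\infty^*(x_n)$, which may differ from $\alpha$. I would handle this either by rescaling, which requires $\inf_n|I_\infty^*(x_n)|>0$ (true for a tail of the sequence), or by using the bound $\sum|I_\infty^*(x_n)-\alpha|<\varepsilon$ together with a Neumann-series perturbation (Proposition 1.a.9 of \cite{L-T:77}). With either fix, $P$ is a bounded projection onto the span of the chosen subsequence, which proves the statement.
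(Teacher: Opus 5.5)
Your proposal is correct and follows essentially the same route as the paper. You normalize so that $I^*_\infty(x_n)$ is (nearly) constant, take consecutive intervals $R_n\supseteq\ran x_n$, and bound $\big\|\sum_n R_n^*(x)e_n\big\|_J\le\|x\|$ using the fact that unions of consecutive $R_n$'s over an interval of indices are again intervals. The paper does the same with its partition $(J_n)$, phrasing the estimate through a norming partition of $y=\sum_n J_n^*(x)e_n$.

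One small correction in Step 3. Rescaling by $I^*_\infty(x_n)^{-1}$ is not justified by $\inf_n|I^*_\infty(x_n)|>0$ alone. The basis of $J$ is conditional, so a bounded diagonal multiplier need not act boundedly on $J$. What makes the rescaling work is the summability $\sum_n|I^*_\infty(x_n)-\alpha|<\infty$ you already arranged in Step 1. Together with $|R_n^*(x)|\le\|x\|$, it shows the multiplier is a bounded perturbation of $\alpha^{-1}I$.
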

\begin{proof} Since $(x_n)_n$ is equivalent to the basis of $J$, by Proposition \ref{P5}, we may, by passing to a subsequence and applying a small perturbation, assume that for all $n\in \N$
$I_n^*(x_n) =1$, where $I_n^* = \ran x_n$ is the smallest interval containing $\supp x_n$. We consider a complete partition $(J_n)_n$ of $\N$
(i.e. $\N= \cup_n J_n$) into segments  such that $I_n\subset J_n$.
\begin{claim} The projection $ P(x) = \sum_n J_n^*(x) x_n$ is bounded.
\end{claim}
Let $x\in J$ then there exists $C> 0$ such that
\begin{equation}\label{eqn: eq 40}
 \|P(x)\|= \| \sum_n J_n^*(x) x_n\| \le C\| \sum_n J_n^*(x) e_n\|.
 \end{equation}
We set $y = \sum_n  J_n^*(x) e_n$ and we choose $Q=\{ L_i \}_{i\in F}$ an $y$-norming partition. Let $Q_1= \{ M_i \}_{i\in F}$ with $ M_i = \cup_{n\in {L_i} }J_n$. Then
\[\| P(x)\| \le C \| y\| = C \| y\|_Q \quad \text{and }\quad \|y \|_Q = \|x\|_{Q_1} \le \|x\| \]
We recall that $\| y \|_Q$ denotes the $Q$ norm of $y$  as it is defined in (\ref{JT-P}) and the same for
$\|x\|_{Q_1} $.

Hence $\|P(x)\| \le C \|x\|$.
 This  ends the proof of the claim and completes the proof of the lemma.
\end{proof}

Lemma \ref{compl.J} and Propositions \ref{P10},  \ref{P7} yield the following result.

\begin{theorem}\label{th:J-seq}
Every non-trivial weakly Cauchy sequence $(x_n)_n$ in $J$ admits a subsequence $(x_{n_k})_k$ equivalent to the basis of $J$ such that $[x_{n_k}]_k$ is complemented in $J$.
\end{theorem}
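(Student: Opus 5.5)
We combine Proposition \ref{P7}, Proposition \ref{P10} and Lemma \ref{compl.J}. Let $(x_n)_n$ be non-trivial weakly Cauchy in $J$, so $x_n \xrightarrow{w^*} x^{**}\in J^{**}\setminus J$. After normalizing we may assume $(x_n)_n\subset B_J$ and $x^{**}\in B_{J^{**}}\setminus B_J$.

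Since the basis of $J$ is boundedly complete, we write $x^{**}=y+y^{**}$ with $y\in J$ and $y^{**}\in J_*^\perp$, where $J_*=[(e_n^*)_n]$. As $J$ is quasi-reflexive of order one, $J_*^\perp$ is one-dimensional and is spanned by the functional $w^*$-$\lim_n e_n$, which acts on $I_\infty^*$ as $1$. Because $x^{**}\notin J$, we have $y^{**}\neq 0$. We then apply Proposition \ref{P7}, or rather its proof: passing to a subsequence, $x_{n_k}-y$ is a small perturbation of a block sequence $(y_k)_k$ with $y_k\xrightarrow{w^*} y^{**}$. Since $I_\infty^*$ is an element of $J^*$,
\[
\lim_k I_\infty^*(y_k)=y^{**}(I_\infty^*)=:\alpha\neq 0 .
\]
The reason $\alpha\ne0$ is that $J^*=J_*\oplus[I_\infty^*]$ and $y^{**}$ vanishes on $J_*$, so $y^{**}$ must be nonzero on $I_\infty^*$. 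The block sequence $(y_k)$ is bounded, so we may pass to a further subsequence along which $\|y_k\|\to\beta$, and rescale so that it lies in $B_J$.

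Proposition \ref{P10} now gives a subsequence of $(y_k)$ equivalent to the basis of $J$. By Lemma \ref{compl.J}, a further subsequence spans a complemented subspace of $J$. Every further subsequence of it is still a block sequence equivalent to the basis of $J$, because a subsequence of the $J$-basis is equivalent to the $J$-basis under the map $e_k\mapsto e_{m_k}$, and this is an isomorphism since subsequences of the basis of $J$ are equivalent to it. Hence Lemma \ref{compl.J} applies again to every subsequence, and the hypothesis of the ``moreover'' part of Proposition \ref{P7} holds. The first part of Proposition \ref{P7}, and in particular its Claim that $(y_k)_{k\ge k_0}$ and $(y_k+y)_{k\ge k_0}$ are equivalent, transfers the equivalence with the $J$-basis to $(x_{n_k})$. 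The second part transfers complementation.

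The main obstacle is the bookkeeping. We must make sure that the block sequence produced in Proposition \ref{P7} is the one to which Propositions \ref{P10} and Lemma \ref{compl.J} are applied, and that the subsequence choices are made in the right order: first a perturbation to blocks, then Proposition \ref{P10}, then Lemma \ref{compl.J}, and finally the transfer back via the small-perturbation principle \cite[Proposition 1.a.9]{L-T:77}. We must also verify $\alpha\neq0$, which is the only place where non-triviality of the weak Cauchy sequence is used.
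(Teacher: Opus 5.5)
Your proposal is correct and follows the paper's route exactly. The paper obtains the theorem in one line from Proposition \ref{P7}, Proposition \ref{P10} and Lemma \ref{compl.J}; you supply the bookkeeping it leaves implicit. In particular, $\lim_k I_\infty^*(y_k)=y^{**}(I_\infty^*)\neq 0$ because $J^*=J_*\oplus[I_\infty^*]$. Also, subsequences of a sequence equivalent to the $J$-basis remain equivalent to it (the basis of $J$ is in fact isometrically spreading for this norm), so the ``moreover'' part of Proposition \ref{P7} applies.
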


\begin{corollary}
Every seminormalized basic sequence $(x_n)_n$ in $J$ has a subsequence $(x_{n_k})_k$ equivalent to the basis of $\ell_2$ or to the basis of $J$ with $[x_{n_k}]_k$ complemented in $J$.
\end{corollary}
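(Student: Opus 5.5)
The plan is to split according to Rosenthal's $\ell_1$ dichotomy. Since $J$ is quasi-reflexive it contains no copy of $\ell_1$; this also follows from Theorem \ref{TH_JT}, because $J$ embeds isometrically into $JT$ along any branch. Hence, after passing to a subsequence, the seminormalized basic sequence $(x_n)_n$ is weakly Cauchy. Two cases remain.

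\emph{First case: $(x_n)_n$ is non-trivial weakly Cauchy}, i.e. its $w^*$-limit lies in $J^{**}\setminus J$. Then Theorem \ref{th:J-seq} applies directly and gives a subsequence equivalent to the basis of $J$ whose closed span is complemented in $J$.

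\emph{Second case: $(x_n)_n$ converges weakly to some $x\in J$.} A seminormalized basic sequence that converges weakly must converge weakly to $0$: the coordinate functionals give $x_m^*(x)=\lim_n x_m^*(x_n)=0$ for all $m$, so $x=0$ on $[x_n]$, which contains $x$. So $(x_n)_n$ is weakly null, and by passing to a subsequence we may assume $\lim_n\|x_n\|=\alpha>0$.

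Next I would use the sliding hump argument with the boundedly complete basis of $J$. Since $e_k^*(x_n)\to 0$, and weak nullity also gives $I_\infty^*(x_n)\to0$, a subsequence is a small perturbation of a block sequence $(y_k)_k$. We can arrange that $I_\infty^*(y_k)=0$, by subtracting a tiny multiple of a basis vector from the support of each block as in Proposition \ref{P5}, and that $\big|\|y_k\|-\alpha\big|$ is small. Lemma \ref{L6} then shows that $(y_k)_k$ is equivalent to the $\ell_2$ basis, and hence so is the corresponding subsequence of $(x_n)_n$.

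The main obstacle is complementation in this $\ell_2$ case. I would adapt the proof of Lemma \ref{compl.J}. Fix an $x_k$-norming partition of each block (Proposition \ref{AG1}), and build a norm-bounded functional $y_k^*=\sum_i \lambda_i I_i^*$ whose intervals $I_i$ lie in $\ran(y_k)$ and satisfy $y_k^*(y_k)=1$ (normalizing by $\|y_k\|\approx\alpha$). Each $y_k^*$ is then supported in $\ran(y_k)$. For any $x\in J$, the quantity $\sum_k y_k^*(x)^2$ is bounded by $\|x\|^2/\alpha^2$ up to a constant, because the intervals used by the different $y_k^*$ are pairwise disjoint; this is the Cauchy–Schwarz argument of Lemma \ref{L3}. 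So $x\mapsto (y_k^*(x))_k$ is bounded into $\ell_2$. Composing with the isomorphism $\ell_2\to[y_k]$ gives a bounded projection $P(x)=\sum_k y_k^*(x)y_k$ onto $[y_k]$.

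Finally, a small perturbation (Proposition 1.a.9 of \cite{L-T:77}) transfers both the complementation and the equivalence to the subsequence of $(x_n)_n$, which completes the dichotomy.
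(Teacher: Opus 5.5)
Your proof is correct and follows the paper's argument. The paper uses Rosenthal's dichotomy, then Theorem~\ref{th:J-seq} for the non-trivial weakly Cauchy case. For the weakly null case it perturbs to a block sequence and projects with block biorthogonal functionals, whose upper $\ell_2$-estimate comes from the disjointness of their supports.

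The only difference is that you handle the weakly null case intrinsically in $J$, using Lemma~\ref{L6} and norming-partition interval functionals. Your projection norm is bounded a priori by about $\sqrt{2}(1+\epsilon)/(1-\epsilon)$, independently of the subsequence, and this is what makes your final appeal to Proposition 1.a.9 of \cite{L-T:77} legitimate. The paper instead regards $J$ as the span of a branch of $JT$, applies Proposition~\ref{P7} and Lemma~\ref{compl.JT}, and restricts the resulting projection back to $J$.
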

\begin{proof} The sequence $(x_n)_n$ has a subsequence which either weakly null or non trivial weakly Cauchy. In the first case the result follows from Lemma \ref{compl.JT} and Proposition
\ref{P7} by considering $J$ as a subspace of $JT$ in a natural manner while in the second one it follows from Lemma \ref{compl.J} and Proposition \ref{P7}.
\end{proof}

We close this subsection with an immediate consequence of Theorem \ref{th:J-seq}.

\begin{corollary}\label{TH_1}
Every non-reflexive subspace of $J$ contains isomorphically the James space $J$ as a complemented subspace of $J$.
\end{corollary}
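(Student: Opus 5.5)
The plan is to reduce the statement to Theorem \ref{th:J-seq} via Eberlein--\v{S}mulian and Rosenthal's $\ell_1$ theorem. Let $Y$ be a closed non-reflexive subspace of $J$. Then $B_Y$ is not weakly compact, so there is a bounded sequence $(y_n)_n$ in $B_Y$ with no weakly convergent subsequence. The space $J$ contains no copy of $\ell_1$: for instance, $J$ embeds isometrically into $JT$ along a branch, and Theorem \ref{TH_JT} applies; alternatively, $J^{**}$ is separable. Hence Rosenthal's theorem gives a weakly Cauchy subsequence, still denoted $(y_n)_n$. Its $w^*$-limit $y^{**}\in J^{**}$ cannot lie in $J$, since otherwise $(y_n)_n$ would converge weakly. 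So $(y_n)_n$ is a non-trivial weakly Cauchy sequence in $J$.

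Next we apply Theorem \ref{th:J-seq}. It yields a subsequence $(y_{n_k})_k$ that is equivalent to the unit vector basis of $J$ and such that $[y_{n_k}]_k$ is complemented in $J$. Since each $y_{n_k}$ lies in $Y$, we get $[y_{n_k}]_k\subset Y$. This subspace is isomorphic to $J$, because equivalence of a sequence with a basis gives an isomorphism between the closed linear spans. It is also complemented in $J$, which is exactly the conclusion.

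I do not expect a real obstacle, since everything is carried by Theorem \ref{th:J-seq}, which in turn rests on Propositions \ref{P7}, \ref{P10} and Lemma \ref{compl.J}. The one point that needs a little care is non-triviality: a weakly Cauchy sequence in a non-reflexive space need not have a non-trivial limit in general. Here it does, precisely because we start from a bounded sequence with no weakly convergent subsequence and pass only to subsequences, so the $w^*$-limit cannot lie in $J$.
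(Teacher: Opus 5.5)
Your proposal is correct and takes essentially the same route as the paper, which states the corollary as an immediate consequence of Theorem \ref{th:J-seq}. You supply the standard details that the paper leaves implicit: Eberlein--\v{S}mulian, Rosenthal's $\ell_1$ theorem (using that $J$ contains no copy of $\ell_1$), and the observation that the $w^*$-limit of the extracted sequence cannot lie in $J$.
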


\subsection{Non-Trivial Weakly Cauchy Sequences in $JT$}
Here we study the embedding of $J$ into non-reflexive subspaces of $JT$.

We recall (Definition \ref{D2}, Proposition \ref{P1}) that every seminormalized block sequence $(x_n)_n$ which is not weakly null has a subsequence that admits a non-zero $\ell_2$ vector determined by a sequence of reals $(a_i)_{i\in F}$ and a sequence of branches $(\s_i)_{i\in F}$. In the sequel, we will assume that $F = \N$ and the sequence $(|a_i|)_i$ is non-increasing.


\begin{lemma}\label{L7}
Let $(x_n)_n$ be a bounded block sequence in $JT$ admitting a non-zero $\ell_2$ vector determined by the sequences $(a_i)_{i\in \N}$ and $(\s_i)_{i\in \N}$.

a) $x_n \xrightarrow{w^*} x^{**}= \sum_{i=1}^\infty a_i \s_i^{**}$ and $\|x^{**}\| = \big( \sum_{i=1} ^\infty a_i^2\big)^{1/2}$.

b) There exists a further block sequence $(z_n)_n$ consisting of successive convex combinations of $(x_n)_n$ such that $z_n \xrightarrow{w^*} x^{**}$ and $\|z_n\| \to \|x^{**}\|$.
\end{lemma}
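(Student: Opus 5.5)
For part (a) I will first identify the $w^*$-limit along a subsequence and then show that every such cluster point is the same vector. Since $JT$ contains no $\ell_1$ (Theorem \ref{TH_JT}) and $(x_n)_n$ is bounded, any $w^*$-cluster point $x^{**}$ of $(x_n)_n$ is the $w^*$-limit of some subsequence. I will test $x^{**}$ against the norm-generating family $D$ of Proposition \ref{P3}. Since $(x_n)_n$ is a block sequence, $e_m^*(x_n)\to 0$. By Definition \ref{D2}, $\sigma^*(x_n)\to a_i$ if $\sigma=\sigma_i$ and $\sigma^*(x_n)\to 0$ otherwise. The vector $y^{**}=\sum_i a_i\sigma_i^{**}$ belongs to $\ell_2(\Gamma^{**})$ because $(a_i)\in\ell_2$; indeed $\sum_i a_i^2\le \sup_n\|x_n\|^2$ by the argument in Proposition \ref{P1}(b)(ii). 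Using $\sigma^{**}(e_m^*)=0$ and $\sigma^{**}(\tau^*)=\delta_{\sigma,\tau}$, the functionals $x^{**}$ and $y^{**}$ agree on $D$. They are bounded, and $D$ has dense span, so $x^{**}=y^{**}$. Every subsequential limit is therefore the same, and so $x_n\xrightarrow{w^*}x^{**}$. Since $\{\sigma^{**}\}$ is isometrically the $\ell_2(\Gamma)$ basis (this is the dual statement to Theorem \ref{P2}(a)), we get $\|x^{**}\|=(\sum a_i^2)^{1/2}$.

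For part (b), $w^*$-lower semicontinuity gives $\liminf\|z_n\|\ge\|x^{**}\|$ for any sequence $z_n\xrightarrow{w^*}x^{**}$. It therefore suffices to produce successive convex blocks with $\limsup\|z_n\|\le\|x^{**}\|+\varepsilon_n$, where $\varepsilon_n\to0$. The natural choice is long averages: for successive sets $E_n$ with $\#E_n=N_n\to\infty$, put $z_n=N_n^{-1}\sum_{k\in E_n}x_k$. These blocks are still $w^*$-convergent to $x^{**}$. To estimate their norms, fix $m$ large and truncate the vector $(a_i)$ at $m$, so that $\sum_{i>m}a_i^2<\varepsilon^2$. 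Choose a level $\ell$ beyond which the branches $\sigma_1,\dots,\sigma_m$ are disjoint, and choose $E_n$ lying above this level. Also choose $E_n$ so that $|\sigma_i^*(x_k)-a_i|$ is tiny for $i\le m$ and $k\in E_n$. Then decompose
\[
x_k=\sum_{i\le m}u_{k,i}+w_k ,
\]
where $u_{k,i}$ is essentially $a_i$ spread along $\sigma_i$ and $w_k$ satisfies $\sigma^*(w_k)\to0$ for all branches except an $\ell_2$-small tail. The $u$-parts average to something of norm close to $(\sum_{i\le m}a_i^2)^{1/2}$, because a single segment along $\sigma_i$ meets $\sum_{k\in E_n}u_{k,i}$ with total mass $\approx N_na_i$. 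The $w$-parts are nearly weakly null, so by Theorem \ref{A-I} a subsequence of them is an $\ell_2$-sequence. The norm of their average is then $O(N_n^{-1/2})\to0$.

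The main obstacle is making this decomposition rigorous. The pieces $x_k$ carry mass near $\sigma_i$ but not necessarily on $\sigma_i$, and segments in a norming partition may cross the parts of several $x_k$. My first attempt will avoid an explicit decomposition. I will instead bound $\|z_n\|$ directly via a norming partition, in the manner of the proof of Lemma \ref{L6}. Segments inside a single $\operatorname{ran}(x_k)$ contribute at most $N_n^{-2}\sum_k\|x_k\|^2$, which is $O(1/N_n)$. A segment crossing several blocks has sum at most the contributions of its two end pieces plus full-range sums $\sigma^*(x_k)$ for a single branch $\sigma$ containing it. Disjoint crossing segments occupy distinct final parts of branches, and by Lemma \ref{L1} only the $\sigma_i$ give non-negligible values. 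Hence the sum of squares of the crossing contributions is bounded by $\sum_i(\text{average of }\sigma_i^*(x_k))^2+o(1)\approx\|x^{**}\|^2$. Controlling the non-$\sigma_i$ branches uniformly is the delicate point. For this I will pass first to a subsequence along which $\sup_{\sigma\notin\{\sigma_1,\dots,\sigma_m\}}\limsup|\sigma^*(x_k)|$ is small, using the construction in Proposition \ref{P1}. I expect the constant to come out at $\|x^{**}\|$ plus errors vanishing with $m$ and $N_n$.
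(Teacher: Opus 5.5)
Part (a) is fine and is essentially the paper's argument. The limits exist on the norm-generating family $D$ of Proposition \ref{P3}, and the norm comes from the isometry $\B^\perp\cong\ell_2(\Gamma^{**})$. The detour through cluster points being subsequential limits is unnecessary: a bounded sequence whose values converge on a set with dense span is already $w^*$-convergent.

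The gap is in (b), at exactly the step you call delicate, and neither of your two routes closes it.

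With a fixed truncation $m$, the remainder satisfies $w_k\xrightarrow{w^*}\sum_{i>m}a_i\sigma_i^{**}\neq 0$ whenever infinitely many $a_i$ are nonzero. So $(w_k)$ is not weakly null, Theorem \ref{A-I} does not apply, and no subsequence of $(w_k)$ is equivalent to the $\ell_2$ basis. Bounding its averages is then the original problem again.

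In the direct norming-partition estimate, smallness of $\sigma^*(x_k)$ off $\{\sigma_1,\dots,\sigma_m\}$, even uniformly in $\sigma$, does not control the crossing segments: many disjoint segments can each collect many small contributions. For instance, fix a branch $\tau$. Let $\tau_j$ be a branch that agrees with $\tau$ below level $j$ and leaves it at level $j$, let $t_{j,k}$ be the node of $\tau_j$ at level $k\geq j$, and put $x_k=k^{-1/2}\sum_{k/2\le j\le k}e_{t_{j,k}}$. This is a seminormalized block sequence with $\sup_\sigma|\sigma^*(x_k)|\le k^{-1/2}\to 0$. Hence it is weakly null, and the whole sequence already satisfies your proposed subsequence criterion. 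Yet for $z_N=N^{-1}\sum_{k=N+1}^{2N}x_k$, the pairwise disjoint segments $s_j=\{t_{j,k}: j\le k\le 2N\}$, $N<j\le 3N/2$, give $s_j^*(z_N)\ge (8N)^{-1/2}$. Hence $\|z_N\|^2\ge \lfloor N/2\rfloor/(8N)$ for every $N$.

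The same defect appears with $x^{**}\neq 0$. Replace $x_k$ by $x_k+e_{r_k}$, where $r_k$ is the level-$k$ node of a fixed branch $\rho$ in the half of the tree not containing $\tau$. The $w^*$-limit becomes $\rho^{**}$, of norm $1$, while the averages still satisfy $\|z_N\|^2\ge 1+\lfloor N/2\rfloor/(8N)$. So long flat averages work only along a suitably thinned subsequence. Producing that thinning is precisely the Ramsey-type content of the Amemiya--Ito theorem, which your argument never invokes in a setting where its hypotheses hold.

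The missing idea is to subtract a sequence that already has the right norm and the right $w^*$-limit, so that the difference is weakly null. The paper takes $(y_n)$ in $\|x^{**}\|B_{JT}$ with $y_n\xrightarrow{w^*}x^{**}$ (Odell--Rosenthal). Here you can even write it down: $y_n=\sum_{i\le m_n}a_i e_{t_{i,n}}$, with $m_n\to\infty$ and $t_{i,n}\in\sigma_i$ distinct nodes on a common level $\ell_n\to\infty$.

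Then $\|y_n\|=(\sum_{i\le m_n}a_i^2)^{1/2}\le\|x^{**}\|$, and testing on $D$ gives $y_n\xrightarrow{w^*}x^{**}$. Indeed, a branch $\sigma\notin\{\sigma_i\}$ meets at most one $t_{i,n}$, and the index of that node tends to infinity. Consequently $(x_n-y_n)$ is weakly null. Mazur's theorem applied to successive tails yields successive finite sets $E_n$ and convex coefficients with $\|\sum_{k\in E_n}\lambda_k(x_k-y_k)\|<1/n$.

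The block $z_n=\sum_{k\in E_n}\lambda_k x_k$ then satisfies $\|z_n\|\le\|x^{**}\|+1/n$, the $y$-part being handled by convexity alone. Your $w^*$-lower semicontinuity remark finishes the proof. In your own terms: let the truncation grow, $m=m_k\to\infty$, and take the $u$-parts to be single incomparable nodes, so that the remainder is genuinely weakly null.
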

\begin{proof}
a) If $x^*$ denotes either some $e_n^*$ or some $\s^*$, it is easy to see that $x^*(x_n) \to x^{**}(x^*)$. Then the first part of (a) follows from Proposition \ref{P3}.
The second part of (a) is explained in subsection 3.3.

b) This is well-known and its proof goes as follows. Assume that $\|x^{**}\| = \alpha$. Then there exists a sequence $(y_n)_n$ in $\alpha B_{JT}$ with $y_n \xrightarrow{w^*} x^{**}$. Observe that $x_n - y_n$ is weakly null; hence there exists a sequence $(z_n)_n$ of successive convex combinations of $(x_n)_n$ and a corresponding sequence $(v_n)_n$ of $(y_n)_n$ such that $\|z_n - v_n\| \to 0$. Since $\|v_n\| \to \alpha$, the result follows.
\end{proof}

\begin{notation}
For $A = \{\s_{i, >m}\}_{i=1}^k$, we say that $A$ is incomparable if the corresponding final segments $\{\s_{i, >m}\}_{i=1}^k$ are incomparable. For $A$ incomparable, we denote by $P_A$ the projection defined as the restriction of any $x\in JT$ to $\cup_{\s_{>m} \in A} \s_{>m}$. As it is shown in Proposition \ref{P_A}, $\|P_A\| = 1$.
\end{notation}

The following is an easy consequence of the definition of the norm of $JT$ and the property that on the branches of $\Tree$ the $JT$-norm coincides with the $J$-norm.

\begin{lemma}\label{L8}
Suppose that $A = \{ \s_{i,>m} : i\in F\}$ is incomparable. Then for each $x\in JT$,
\[ \|P_A(x)\| = \Big( \sum_{\s \in A} \|P_{\s_{> m}}(x)\|_J^2 \Big)^{1/2}, \]
where $\|\cdot\|_J$ denotes the norm of the James space $J$.
\end{lemma}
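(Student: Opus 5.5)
The plan is a direct double inequality, and the key structural input is that incomparable final segments occupy disjoint "subtrees". Set $Y=\bigcup_{\s_{>m}\in A}\s_{>m}$. Since the first elements of the final segments in $A$ are pairwise incomparable, every segment $s$ of $\Tree$ satisfies one of two things. Either $s\cap Y=\emptyset$, or $s\cap Y$ is contained in a single $\s_{i,>m}$ and is itself a segment of that branch. The reason is that a segment is totally ordered, so it cannot meet two final segments whose roots are incomparable. Moreover, any node of $\s_{i,>m}$ lying in $s$ forces $s\cap Y$ to be an interval of $\s_{i,>m}$, by the segment property. So I would first record this observation; it is the only place the incomparability hypothesis enters.

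\emph{Upper bound.} Write $y=P_A(x)$ and let $s_1,\dots,s_k$ be pairwise disjoint segments. By the observation, each $s_j$ with $s_j^*(y)\neq 0$ meets exactly one $\s_{i,>m}$, in an interval $I_j$ of that branch, and $s_j^*(y)=I_j^*(P_{\s_{i,>m}}x)$. Group the $j$'s according to the index $i$. For fixed $i$ the intervals $I_j$ are pairwise disjoint intervals of the branch $\s_{i,>m}$, which is order-isomorphic to $\N$. So the grouped sum of squares is at most $\|P_{\s_{i,>m}}x\|_J^2$, using that on a branch the $JT$-norm is the $J$-norm. Summing over $i$ and taking the supremum over partitions gives
\[
\|P_A x\|^2\le \sum_{\s\in A}\|P_{\s_{>m}}x\|_J^2 .
\]

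\emph{Lower bound.} For each $i$, choose disjoint intervals of $\s_{i,>m}$ that nearly norm $P_{\s_{i,>m}}x$ in $J$; when $F$ is infinite, restrict to finitely many $i$. These intervals are segments of $\Tree$. Intervals coming from different final segments are disjoint, since the final segments themselves are disjoint. Their union is therefore an admissible family for the $JT$ norm of $y$, and on each such interval $y$ agrees with $P_{\s_{i,>m}}x$. This yields $\|P_Ax\|^2\ge\sum_i\|P_{\s_{i,>m}}x\|_J^2-\e$; letting $\e\to0$ and then taking the supremum over finite subfamilies of $A$ gives the reverse inequality.

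The argument runs first on finitely supported $x$ and then extends to all $x\in JT$ by density, because both sides are continuous in $x$ and $P_A$ is bounded. The main point needing care is the structural observation in the first paragraph, that a segment meeting $Y$ does so inside a single final segment and in an interval. I expect that to be the only nontrivial step; everything after it is routine bookkeeping with partitions.
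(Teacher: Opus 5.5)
Your proof is correct and follows the paper's approach: the paper gives no written proof, only the remark that the lemma follows from the definition of the $JT$-norm and the fact that on a branch the $JT$-norm coincides with the $J$-norm. Your observation that a segment meets at most one of the incomparable final segments, and then in an interval, is exactly what makes that remark work, and the density step is harmless since your lower-bound argument already applies directly to every $x\in JT$.
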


\begin{remark}\label{R1}
Let $x^{**} = \sum_{i=1}^\infty a_i\s_i^{**}$ with  $\|x^{**}\| = 1$, and let $(x_n)_n$ be a block sequence such that $x_n \xrightarrow{w^*} x^{**}$. In the next proposition, we will assume that $(x_n)_n$ satisfies the following additional properties:
\begin{enumerate}
    \item[i)] $\lim_n \|x_n\| = b \geq 1$.
    \item[ii)] For every $i \in \N$, $\lim_n \|P_{\s_i}(x_n)\| = b_i \geq |a_i|$.
\end{enumerate}
Clearly, every $(x_n)_n$ with $x_n \xrightarrow{w^*} x^{**}$ has a subsequence satisfying (i) and (ii). Moreover, it is easy to check that $(\sum_{i=1}^\infty b_i^2)^{1/2} \leq b$.
\end{remark}

\begin{proposition}\label{P11}
Let $(x_n)_n$ be a block sequence in $JT$ such that
$x_n \xrightarrow{w^*} x^{**}$ with $x^{**} = \sum_{i=1}^\infty a_i \s_i^{**}$ and $\|x^{**}\| = 1$.
Then $(x_n)_n$ has a subsequence equivalent to the basis of $J$ generating a complemented subspace of $JT$.
\end{proposition}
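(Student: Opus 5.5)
We mimic the argument of Proposition \ref{P10}: split each $x_n$ into a part that carries the $w^*$-limit along finitely many branches and a remainder that is essentially weakly null.

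\emph{Normalization.} By Remark \ref{R1} we pass to a subsequence with $\|x_n\|\to b$ and $\|P_{\s_i}(x_n)\|\to b_i\ge |a_i|$. Since $\sum_i a_i^2=1$, given $\e>0$ we fix $N$ with $\sum_{i>N}a_i^2<\e^2$. We then fix $m$ such that $A=\{\s_{i,>m}\}_{i=1}^N$ is incomparable. We also choose, for each $i\le N$ and each $n$, nodes $m_i(n)\in\s_i$ lying strictly between the supports of $x_n$ and $x_{n+1}$ (after passing to a skipped block subsequence). Then we set
\[
u_n=\sum_{i=1}^N a_i e_{m_i(n)},\qquad v_n=x_n-u_n .
\]

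\emph{Lower estimate.} For each $i\le N$ the vectors $(e_{m_i(n)})_n$ lie on the branch $\s_i$, so they are isometric to a subsequence of the basis of $J$, hence isometrically equivalent to the basis of $J$. By Lemma \ref{L8},
\[
\Big\|\sum_k \lambda_k u_{n_k}\Big\|=\Big(\sum_{i\le N}a_i^2\Big)^{1/2}\Big\|\sum_k\lambda_k e_k\Big\|_J .
\]
To get a lower estimate for $\sum_k\lambda_kx_{n_k}$, we repeat the first half of Proposition \ref{P10} inside $P_A$. For each $i\le N$ and each family of disjoint intervals $I_1,\dots,I_M$ of indices $k$, we use the segments $L_j\subset\s_{i,>m}$ that cover the supports of the $x_{n_k}$, $k\in I_j$. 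Since $\s_i^*(x_{n_k})\to a_i$, we can choose a subsequence with $\sum_k|\s_i^*(x_{n_k})-a_i|$ small for every $i\le N$. The segments taken over different $i$ are disjoint because $A$ is incomparable. Summing over $i$ gives
\[
\Big\|\sum_k \lambda_k x_{n_k}\Big\|\ge (1-\e)\Big(\sum_{i\le N}a_i^2\Big)^{1/2}\Big\|\sum_k\lambda_ke_k\Big\|_J-\e\|(\lambda_k)\|_\infty ,
\]
which is $\gtrsim\|\sum\lambda_ke_k\|_J$ since the $J$-norm dominates the sup norm.

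\emph{Upper estimate.} First, $\|\sum\lambda_ku_{n_k}\|\le\|\sum\lambda_ke_k\|_J$. It remains to bound $\sum_k\lambda_k v_{n_k}$. The sequence $(v_n)$ is a seminormalized block sequence with $w^*$-limit $\sum_{i>N}a_i\s_i^{**}$, whose norm is below $\e$. We aim to show that, after passing to a subsequence, $(v_{n_k})$ admits an upper $\ell_2$-estimate with a constant $C$ depending only on $b$. Granting this, the domination of $\ell_2$ by $J$ gives an upper estimate by $C\|\sum\lambda_ke_k\|_J$, and hence equivalence with the basis of $J$.

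\emph{Main obstacle.} Proving this upper $\ell_2$-estimate is the step I expect to be hardest. Theorem \ref{A-I} does not apply directly, because $(v_n)$ is not weakly null: it still carries the small tail $\sum_{i>N}a_i\s_i^{**}$. I would first try to avoid the tail entirely by letting $N\to\infty$ with the choice of subsequence. That is, at stage $k$, place $e_{m_i(n_k)}$ for $i\le N_k$, with $N_k\uparrow\infty$, so that $\s^*(v_{n_k})\to0$ for every branch $\s$. Then a subsequence of $(v_{n_k})$ is equivalent to the $\ell_2$ basis by Theorem \ref{A-I}. The vectors $u_{n_k}$ now involve growing sets of branches, but the lower and upper computations above still go through. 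The reason is that with $a$ fixed, $\|\sum_k\lambda_k\sum_{i\le N_k}a_ie_{m_i(n_k)}\|$ is equivalent to $\|\sum\lambda_ke_k\|_J$ uniformly: the upper bound follows from Proposition \ref{P_A} type estimates, and the lower bound from restricting to finitely many branches as above.

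\emph{Complementation.} We build a bounded projection in the spirit of Lemma \ref{compl.J}. Take a partition of $\s_1$ into consecutive intervals $J_k\ni m_1(n_k)$, and set
\[
P x=\sum_k a_1^{-1}J_k^*(P_{\s_{1,>m}}x)\,x_{n_k}.
\]
Boundedness of $P$ follows from the bound $\|\sum J_k^*(\cdot)e_k\|_J\le\|\cdot\|$ and the equivalence of $(x_{n_k})$ with the basis of $J$ already established. After a small perturbation of the $m_1(n_k)$ so that $J_k^*(x_{n_l})\approx a_1\delta_{kl}$, we have $P x_{n_k}\approx x_{n_k}$. A standard perturbation (as in Proposition \ref{P7}) then yields a genuine bounded projection onto $[x_{n_k}]$.
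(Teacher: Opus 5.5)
Your decomposition is a genuinely different one from the paper's, and most of it is sound. You subtract gap-placed spikes $u_{n_k}$, in the spirit of Proposition \ref{P10}, so that the remainder is weakly null. The paper instead keeps $v_k=P_{\cup_{p\le k}A_p}(x_{n_k})$, a restriction of $x_{n_k}$ itself. However, the step you flag as the main obstacle is not actually overcome.

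Once $N_k\uparrow\infty$, the bound $\|\sum_k\lambda_k u_{n_k}\|\lesssim\|\sum_k\lambda_k e_k\|_J$ does not follow from Proposition \ref{P_A} or Lemma \ref{L8}. Those results apply only when all the supports lie in one incomparable family of final segments, and here they do not. A branch $\s_j$ activated later ($j>N_k$) may pass through a node $m_i(n_k)$ placed earlier on $\s_i$, so a single segment of $\Tree$ can run along $\s_i$ and then continue along $\s_j$. Splitting branch by branch and using the triangle inequality only gives $\big(\sum_i|a_i|\big)\|\sum_k\lambda_ke_k\|_J$. Since you only know $\sum_i a_i^2=1$, the sum $\sum_i|a_i|$ may diverge. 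Controlling the contribution of infinitely many branches is exactly the difficulty. It is where the paper needs its extra device: the blocks $F_k$, the numbers $\delta_k$ with $\sum_k\delta_k<(1+\e)b$, and property (iii).

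The same device closes your argument.
\begin{itemize}
\item Activate the branches in blocks $F_p=\{N_{p-1}+1,\dots,N_p\}$ chosen so that $\gamma_p=(\sum_{i\in F_p}a_i^2)^{1/2}$ is summable.
\item Put all spikes of stage $k$ on one common level, inside the gap after $\supp x_{n_k}$ and beyond the level where $\s_1,\dots,\s_{N_k}$ have split.
\end{itemize}
This placement is also what you need for $\s^*(v_{n_k})\to0$ for every $\s$, which you asserted without proof. Each branch meets at most one spike per stage, $\s_i^*(u_{n_k})=a_i$ once $N_k\ge i$, and for $\s\notin\{\s_i\}$ one eventually has $|\s^*(u_{n_k})|\le\sup_{i>i_0}|a_i|$.

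For each $p$, the spikes $m_i(n_k)$ with $i\in F_p$ and $k\ge p$ then lie in one incomparable family. Lemma \ref{L8} gives $\|\sum_{k\ge p}\lambda_k\sum_{i\in F_p}a_ie_{m_i(n_k)}\|=\gamma_p\|\sum_{k\ge p}\lambda_ke_k\|_J$. Summing over $p$ yields the upper estimate with constant $\sum_p\gamma_p$. This estimate is stable under passing to subsequences, as you need before applying Theorem \ref{A-I} to $(v_{n_k})$. With this in place, the rest of your proof goes through.

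In the complementation step, no perturbation of the nodes is needed. Cutting $\s_1$ at the nodes $m_1(n_k)$ already gives $J_k^*(x_{n_l})=\delta_{kl}\s_1^*(x_{n_k})$. The summable closeness of $\s_1^*(x_{n_k})$ to $a_1$, which you arranged for the lower estimate, makes $P$ restricted to $[x_{n_k}]$ a small perturbation of the identity, which yields a bounded projection onto $[x_{n_k}]$.
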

\begin{proof}
As we have noticed, we can assume that $(x_n)_n$ satisfies (i) and (ii) in Remark \ref{R1}. For $0< \epsilon < 1$, we choose a sequence $(\delta_k)_k$ of positive reals such that
\begin{equation}\label{eqn:eq.13}
\delta_1 = b \quad \text{and} \quad \sum_{k=1}^\infty \delta_k < (1 +\epsilon) b.
\end{equation}

Next we partition $\N$ into successive disjoint intervals $(F_k)_k$ satisfying for $k\geq 2$:
\begin{equation}\label{eqn:eq.12}
3(1+\epsilon) \Big(\sum_{i\in F_k}(|a_i| + b_i)^2 \Big)^{1/2} < \delta_k,
\end{equation}
and we choose an increasing sequence of naturals $(m_k)_k$ such that the set
\[ A_k =\Big\{ \s_{i,> m_k} : i \in F_k\Big\} \]
consists of incomparable final segments. Proposition \ref{P_A} yields that $\|P_{A_k}\| = 1$.
By induction, we define a decreasing sequence $(L_k)_k$ of infinite subsets of $\N$ such that the following facts are satisfied. (In the following $(e_n)_n$ denotes the basis of $J$.)

\begin{equation}\label{eqn:eq.11}
\begin{split}
(i) \quad &\forall n\in L_k, \text{ we have } m_k < \supp(x_n) \text{ and } \forall i\in F_k, \quad \Big| \s_i^*(x_n) -a_i \Big| < \epsilon |a_i| \text{ and }  \\
&\quad \Big| \|P_{\s_i}(x_n)\| - b_i \Big| < \epsilon b_i. \\
(ii) \quad &\forall i\in F_k \text{ and scalars }(\lambda_n)_{n\in F_k},\\
&\quad (1-\epsilon) |a_i| \Big\| \sum_{n\in L_k}\lambda_ne_n \Big\|_J\leq \Big\| \sum_{n\in L_k} \lambda_nP_{\s_i}(x_n) \Big\|  \leq  3(1+\epsilon)(|a_i|+ b_i)\Big\| \sum_{n \in L_k} \lambda_ne_n \Big\|. \\
(iii) \quad &\text{For } k\geq 2, \quad \Big\| \sum_{n\in L_k} \lambda_nP_{A_k}(x_n) \Big\| \leq 3(1+\epsilon)\Big(\sum_{i\in F_k}(|a_i|+ b_i)^2\Big)^{1/2}\Big\| \sum_{n \in L_k} \lambda_ne_n \Big\| _J\leq \\ &\quad \leq  \delta_k \Big\| \sum_{n \in L_k} \lambda_ne_n \Big\|_J.
\end{split}
\end{equation}

The inductive definition of $L_k$ satisfying (i) follows from the fact that $(x_n)_n$ is a block sequence, $\|P_{\s_i}(x_n)\| \to b_i$ and $\s_i^*(x_n) \to a_i$.

Property (ii) follows from Proposition \ref{P10} and Property (iii) follows from (ii), Lemma \ref{L8} and the properties of $(\delta_k)_k$ (\ref{eqn:eq.12}).

This completes the inductive definition of the sequence $(L_k)_k$.

Choose an increasing sequence $(n_k)_k$ with $n_k\in L_k$ and set
\[ v_k = P_{\cup_{p\leq k} A_p}(x_{n_k}) \quad \text{and} \quad w_k = x_{n_k} - v_k. \]
We will show the following facts:

1) The block sequence $(v_k)_k$ is dominated by the basis of $J$.

2) The sequence $(w_k)_k$ is either norm null or has a subsequence equivalent to the $\ell_2$ basis.

Notice that if (1) and (2) hold, then there exists a subsequence of $(x_{n_k})_k$ which is equivalent to the basis of $J$. If $(w_k)_k$ is norm null, then this follows from a standard perturbation argument. 
Otherwise, $(w_k)$ is equivalent to the $\ell_2$ basis. Then there exist $c_1, c_2$ positive constants such that
\begin{equation}\label{eqn:eq.10}
\begin{split}
\Big\|\sum_{k=1}^\infty\lambda_k x_{n_k} \Big\| &\leq \Big\| \sum_{k=1}^\infty\lambda_k v_k \Big\| + \Big\| \sum_{k=1}^\infty\lambda_k w_k \Big\| \\
&\leq c_1 \Big\| \sum_{k=1}^\infty\lambda_k e_k \Big\|_J + c_2\Big( \sum_{k=1}^\infty\lambda_k^2 \Big)^{1/2} \leq (c_1+c_2)\Big\| \sum_{k=1}^\infty\lambda_k e_k \Big\|_J.
\end{split}
\end{equation}
The last inequality follows from the fact  that the $\ell_2$ norm is dominated by the $J$ norm in the basis $(e_k)_k$ of $J$. If a subsequence of $(w_k)_k$ is equivalent to the $\ell_2$ basis, then (\ref{eqn:eq.10}) remains valid for this subsequence. Also, if $(w_k)_k$ is norm null, then (1) and a permutation argument yield a subsequence of $(x_{n_k})_k$ dominated by the basis of $J$.

For the opposite inequality, namely the basis of $J$ is dominated by $(x_{n_k})_n$, we notice that $G = \{n_k : k\in\N\} \subset L_1$. Hence (\ref{eqn:eq.11}) (ii) gives:
\[ (1-\epsilon) |a_1| \Big\| \sum_{n\in G}\lambda_ne_n \Big\|_J \leq \Big\| \sum_{n\in G} \lambda_nP_{\s_1}(x_n) \Big\| \leq \Big\| \sum_{n\in G} \lambda_nx_n \Big\|. \]
This ends the proof of the equivalence. It remains to prove (1) and (2).

To prove (1), let $(\lambda_k)_{k\leq m} \subset \R$. By the definition of $(v_k)_k$ we have
\begin{equation}\label{eqn:eq.14}
\sum_{k=1}^m \lambda_k v_k = \sum_{k=1}^m \lambda_k P_{A_1}(v_k) + \sum_{k=2}^m \lambda_k P_{A_2}(v_k) + \dots + \lambda_m P_{A_m}(v_m).
\end{equation}
Since for every $p\geq k$ we have $P_{A_k}(v_p) = P_{A_k}(x_{n_p})$ and $(n_p)_{p\geq k} \subset L_k$ for $1 < k\leq m$, from the bimonotonicity of the basis of $J$ and (\ref{eqn:eq.11}) (iii) we derive:
\begin{equation}\label{eqn:eq.15}
\Big\| \sum_{p=k}^m \lambda_pP_{A_k}(v_p) \Big\| \leq \delta_k \Big\| \sum_{p=k}^m \lambda_p e_p \Big\|_J \leq \delta_k \Big\| \sum_{p=1}^m \lambda_p e_p \Big\|_J.
\end{equation}
For $k=1$ we have
\begin{equation}\label{eqn:eq.17}
\Big\| \sum_{p=1}^m \lambda_pP_{A_1}(v_p) \Big\| \leq 3\Big( \sum_{i\in F_1} (|a_i| + b_i)^2\Big)^{1/2} \Big\| \sum_{p=1}^m \lambda_p e_p \Big\|_J \leq 3\sqrt{2} (1+ \epsilon) b\Big\| \sum_{p=1}^m \lambda_p e_p \Big\|_J.
\end{equation}
Using the triangle inequality, (\ref{eqn:eq.13}) and (\ref{eqn:eq.17}) we get
\begin{equation}\label{eqn:eq.16}
\Big\|\sum_{k=1}^m \lambda_k v_k \Big\| < 6(1+\epsilon)b \Big\|\sum_{k=1}^m \lambda_k e_k \Big\|_J.
\end{equation}
This completes the proof of (1).

To prove (2), we show that $v_k \xrightarrow{w^*} x^{**}$. Indeed, if $\s = \s_i$, then $\s_i^*(v_k) \to a_i = x^{**}(\s_i^*)$. Assume $\s\neq \s_i$ for all $i \in \N$.
We assert that $\lim_k \s^*(v_k) = 0$. Observe from (\ref{eqn:eq.11}) (i) we have that for every $\delta > 0$ there exists a finite initial segment $F$ of $\N$ and $k_0$ such that for $k>k_0$ and $\s \neq \s_i$ for $i\in F$ we have $\| P_\s(v_k)\| < \delta$.

Therefore the assertion is true and $v_k \xrightarrow{w^*} x^{**}$.
Since $x_{n_k} \xrightarrow{w^*} x^{**}$ and $w_k = x_{n_k} - v_k$, we conclude that $w_k \xrightarrow{w} 0$. This and Theorem \ref{A-I} proves (2) and completes the proof that the subsequence $(x_{n_k})_k$ is equivalent to the basis of $J$.

For the complementation of $[(x_{n_k})_k]$ we observe that $P_{\s_1}$ (defined above) is an isomorphism on $[(x_{n_k})_k]$. So, Lemma \ref{compl.J} yields the property.
\end{proof}

\begin{remark}
Although we stated Proposition \ref{P11} for $x^{**} =\sum_{i=1}^\infty a_i \s_i^{**}$, the result remains valid (and is in fact easier to prove) if $x^{**} =\sum_{i\in F}a_i \s_i^{**}$ with $F$ an initial segment of $\N$. Thus, every non-trivial weakly Cauchy block sequence $(x_n)_n$ in $JT$ has a subsequence $(x_{n_k})_k$ equivalent to the basis of $J$ with $[(x_{n_k})_k]$ complemented in $JT$.
\end{remark}

Propositions \ref{P7} and \ref{P11} yield the following result.

\begin{theorem}
Every non-trivial weakly Cauchy sequence in $JT$ has a subsequence equivalent to the basis of $J$
and  the space it generates is complemented in $JT$.
\end{theorem}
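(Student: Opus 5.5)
The plan is to reduce to the block-sequence case already treated in Proposition \ref{P11} and the Remark following it, using Proposition \ref{P7} to pass from an arbitrary non-trivial weakly Cauchy sequence to a block sequence.

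Let $(x_n)_n$ be non-trivial weakly Cauchy in $JT$, with $x_n\xrightarrow{w^*} x^{**}\in JT^{**}\setminus JT$. Rescaling, we may assume $x^{**}\in B_{JT^{**}}\setminus B_{JT}$ and $(x_n)$ is bounded. Since the basis $(e_n)_n$ of $JT$ is boundedly complete, the decomposition $JT^{**}=JT\oplus \B^\perp$ gives $x^{**}=y+y^{**}$ with $y\in JT$ and $y^{**}\in \B^\perp=\ell_2(\Gamma^{**})$, and $y^{**}\neq 0$.

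Proposition \ref{P7} then yields a subsequence $(x_{n_k})_k$ equivalent to a block sequence $(y_k)_k$. From its proof, $(y_k)$ is a small perturbation of $(x_{n_k}-y)$, so $y_k\xrightarrow{w^*} y^{**}$. Thus $(y_k)$ is a seminormalized block sequence (seminormalized because $y^{**}\ne0$) which is non-trivial weakly Cauchy with limit in $\ell_2(\Gamma^{**})$.

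Write $y^{**}=\sum_{i\in F}a_i\s_i^{**}$, which is possible by Lemma \ref{L7}(a) and Proposition \ref{P1}. The limit $y^{**}$ is nonzero, and Proposition \ref{P11} is stated for limits of norm one. Normalizing $y^{**}$ by rescaling the $y_k$ does not affect equivalence or complementation, so we may assume $\|y^{**}\|=1$.

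Proposition \ref{P11}, together with the Remark covering finite $F$, then gives the following for every subsequence of $(y_k)$: it has a further subsequence equivalent to the basis of $J$ and spanning a complemented subspace of $JT$. Apply this inside the subsequence produced by Proposition \ref{P7}. Choose the subsequence of $(y_k)$ given by Proposition \ref{P11}, and pass to the corresponding subsequence of $(x_{n_k})$; it remains equivalent to the basis of $J$, since equivalence passes to matching subsequences.

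For complementation we use the ``moreover'' part of Proposition \ref{P7}. Its hypothesis is exactly that every subsequence of $(y_k)$ has a further subsequence spanning a complemented subspace, and Proposition \ref{P11} provides this. So $(x_{n_k})$ can be refined to span a complemented subspace of $JT$.

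The main point to check is that applying Proposition \ref{P11} to a subsequence, and then Proposition \ref{P7}'s perturbation, preserves both the $J$-equivalence and the complementation simultaneously. In the case $y\neq0$, the complementation is carried over by the second claim in the proof of Proposition \ref{P7}, which handles $[(y+y_k)_{k\in L}]$ via the decomposition $JT=\langle y\rangle\oplus\ker x^*$. Combined with the principle of small perturbations, this yields the result.
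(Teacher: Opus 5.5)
Your proposal is correct and follows the paper's proof. You use Proposition~\ref{P7} to pass to a block sequence $(y_k)$ that is non-trivial weakly Cauchy with limit in $\ell_2(\Gamma^{**})$, apply Proposition~\ref{P11} (with the following Remark for finitely many branches) to every subsequence of $(y_k)$, and transfer both the $J$-equivalence and the complementation back via the ``moreover'' part of Proposition~\ref{P7}; your extra bookkeeping (rescaling to norm one, seminormalization, the finite case) only makes explicit what the paper leaves implicit.
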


\begin{proof}
Let $(z_k)_k$ be a non-trivial weakly Cauchy sequence in $JT$. By Proposition \ref{P7}, it has a subsequence equivalent to a block sequence $(y_k)_k$, which must also be non-trivial weakly Cauchy, and thus $w^*$-converges to some $y^{**} = \sum_{i=1}^\infty a_i\sigma_i^{**}$ with $\|y^{**}\| >0$. By Proposition \ref{P11}, any subsequence of $(y_k)_k$ has a further subsequence equivalent to the basis of $J$ and the space it generates is complemented in $JT$. By the ``moreover'' part of Proposition \ref{P7}, the same conclusion holds for some subsequence of $(z_k)_k$.
\end{proof}

The following results are immediate consequences.

\begin{corollary}\label{TH_2}
Every non-reflexive subspace $Z$ of $JT$ contains isomorphically  a complemented copy of James space $J$.
\end{corollary}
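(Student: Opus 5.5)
The plan is to reduce Corollary \ref{TH_2} to the preceding Theorem on non-trivial weakly Cauchy sequences in $JT$. The only work is to produce, inside a non-reflexive subspace $Z$, a sequence that is weakly Cauchy but not weakly convergent.

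First, since $Z$ is non-reflexive, its unit ball $B_Z$ is not weakly compact. By the Eberlein--\v{S}mulian theorem there is a sequence $(z_n)_n$ in $B_Z$ with no weakly convergent subsequence. Next, $Z\subset JT$ contains no copy of $\ell_1$ by Theorem \ref{TH_JT}. Rosenthal's $\ell_1$ theorem therefore lets us pass to a subsequence, still denoted $(z_n)_n$, that is weakly Cauchy. This subsequence is not weakly convergent, so its $w^*$-limit $z^{**}\in Z^{**}$ lies outside $Z$. Since $Z$ is norm closed, hence weakly closed, in $JT$, the $w^*$-limit of $(z_n)_n$ in $JT^{**}$ also lies outside $JT$. (If it were some $x\in JT$, then $z_n\to x$ weakly in $JT$, forcing $x\in Z$ and $z_n\to x$ weakly in $Z$, a contradiction.) Thus $(z_n)_n$ is a non-trivial weakly Cauchy sequence in $JT$.

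Finally, the preceding Theorem gives a subsequence $(z_{n_k})_k$ equivalent to the basis of $J$ with $[z_{n_k}]_k$ complemented in $JT$. Since $[z_{n_k}]_k\subset Z$, this is a complemented copy of $J$ inside $Z$. It is complemented in $JT$, and hence also in $Z$, by restricting the projection.

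The only delicate point is the middle step, namely checking that non-convergence in $Z$ transfers to non-triviality in $JT$. This follows from weak closedness of closed subspaces, so no step is really an obstacle; the substance is entirely in Proposition \ref{P11} and Proposition \ref{P7}.
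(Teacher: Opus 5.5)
Your proof is correct and follows the paper's intended argument, which it leaves implicit by calling this corollary an immediate consequence of the theorem on non-trivial weakly Cauchy sequences in $JT$. Eberlein--\v{S}mulian together with Rosenthal's $\ell_1$ theorem (since $JT$ contains no copy of $\ell_1$) produces a non-trivial weakly Cauchy sequence in $Z$, and your check, via weak closedness of $Z$, that it stays non-trivial in $JT$ is exactly the detail the paper omits.
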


\begin{corollary}
Every Schauder basic sequence in $JT$ has a subsequence which is either equivalent to the basis of $\ell_2$ or to the basis of $J$ and the space it generates complemented in $JT$.
\end{corollary}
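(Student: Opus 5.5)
Let $(x_n)_n$ be a Schauder basic sequence in $JT$. We may assume it is seminormalized: a basic sequence has coefficient functionals bounded by $2K/\|x_n\|$, so we normalize, and replacing $x_n$ by $x_n/\|x_n\|$ changes neither the generated subspace nor which of the two bases it is equivalent to. Since $JT$ contains no copy of $\ell_1$ (Theorem \ref{TH_JT}), Rosenthal's $\ell_1$ theorem gives a weakly Cauchy subsequence. A weakly Cauchy sequence in $JT$ either converges weakly to some $x\in JT$ or is non-trivial weakly Cauchy. We split into two cases.

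\emph{Case 1: the subsequence is non-trivial weakly Cauchy.} We apply the theorem just proved (via Propositions \ref{P7} and \ref{P11}). It gives a further subsequence equivalent to the basis of $J$ whose closed span is complemented in $JT$, and this case is done.

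\emph{Case 2: the subsequence converges weakly to some $x\in JT$.} Since the sequence is basic and every functional in $[x_n]^*$ extends to $JT$, the weak limit lies in $[x_n]$. Its coefficient functionals $x_k^*$ satisfy $x_k^*(x_n)\to 0$ as $n\to\infty$, while $x_k^*(x_n)=\delta_{kn}$. Hence $x_k^*(x)=0$ for every $k$, so $x=0$ and the sequence is seminormalized weakly null. Next, since $(e_n)_n$ is a basis and $e_m^*(x_n)\to 0$ for each $m$, the sliding hump argument already used in Proposition \ref{P7} gives a subsequence $(x_{n_k})_k$ with $\sum_k\|x_{n_k}-y_k\|$ as small as we like, where $(y_k)_k$ is a block sequence. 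The sequence $(y_k)_k$ is weakly null and seminormalized, so $\sigma^*(y_k)\to 0$ for every branch $\sigma$. By Theorem \ref{A-I}, passing to a subsequence, $(y_k)_k$ is equivalent to the $\ell_2$ basis.

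For complementation in Case 2, the plan is to avoid level-block arguments. The proof of Theorem \ref{JT-subp}(1) is the cleaner route: by Theorem \ref{G-MA}, pass to a subsequence admitting biorthogonal functionals $(v_k)$ in $\B$. By Proposition \ref{l2inB} together with the perturbation argument of Theorem \ref{B-subp}, refine further so that $(v_k)$ is also equivalent to the $\ell_2$ basis. Then
\[ P x=\sum_k \langle v_k,x\rangle x_{n_k} \]
is a bounded projection onto $[x_{n_k}]$. Here we must check that $(v_k)$ admits an upper $\ell_2$-estimate; we get this by approximating $(v_k)$ by a normalized level-block sequence in $\B$ and applying Corollary \ref{u-est}. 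Alternatively, complementation of the block sequence $(y_k)$ can be transferred to $(x_{n_k})$ by the small-perturbation principle (Proposition 1.a.9 in \cite{L-T:77}), as was done in Proposition \ref{P7}.

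The main obstacle is this last step: making the biorthogonal functionals simultaneously lie in $\B$ and be $\ell_2$-equivalent after passing to subsequences. The subsequence selections must be ordered so that each refinement preserves the previous properties. Theorem \ref{G-MA} applies first, since $0$ is a weak$^*$ cluster point of the weakly null sequence. The $\ell_2$ refinement of the functionals comes next, and it is stable under passing to subsequences.
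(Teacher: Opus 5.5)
Your architecture is the one the paper has in mind (compare the proof of the analogous corollary for $J$): Rosenthal's theorem, then the preceding theorem in the non-trivial weakly Cauchy case, and in the weakly null case a sliding hump followed by Theorem \ref{A-I}. Case 1 and the $\ell_2$-part of Case 2 are fine.

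The gap is in the complementation step of Case 2, at the point you flag, and it is not a matter of ordering the subsequence selections. Theorem \ref{G-MA} tells you nothing about $(v_k)$ beyond boundedness and biorthogonality. In particular it does not give $\langle v_k,e_m\rangle\to 0$ for each $m$. Without that there is no sliding hump in $\B$, no approximation of $(v_k)$ by a level-block sequence, and Corollary \ref{u-est} cannot be invoked.

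Passing to subsequences cannot fix this in general. The space $[x_{n_k}]\simeq\ell_2$ is reflexive, hence $w^*$-closed in $JT=\B^*$ by the Krein--\v{S}mulian theorem, and it is proper. So there is some $0\neq w\in[x_{n_k}]_\perp\subset\B$, and $(v_k+w)_k$ is an equally admissible output of Theorem \ref{G-MA}. Suppose $\|\sum_k a_kv_k\|\le C(\sum_k a_k^2)^{1/2}$ for all scalars. Then for every subsequence $(j_k)$,
\[
\Big\|\sum_{k=1}^N \frac{1}{k}\,(v_{j_k}+w)\Big\| \geq \|w\|\sum_{k=1}^N\frac{1}{k} - C\Big(\sum_{k=1}^N\frac{1}{k^2}\Big)^{1/2}\longrightarrow\infty ,
\]
so no subsequence of $(v_k+w)_k$ admits an upper $\ell_2$-estimate. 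The functionals themselves must be modified; the corresponding sentence in the proof of Theorem \ref{JT-subp}(1) tacitly relies on such a modification as well.

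One repair is the following. Pass to a subsequence with $v_k\to v^{**}$ in the $w^*$-topology of $JT^*$, which is possible because $JT$ is separable. Replace $(v_k)$ by $u_k=v_{2k}-v_{2k-1}$. These are biorthogonal to $(x_{n_{2k}})_k$, seminormalized, and coordinatewise null. A sliding hump in $\B$, a level-block subsequence and Corollary \ref{u-est} then give the upper estimate needed for $Px=\sum_k\langle u_k,x\rangle x_{n_{2k}}$ to be bounded.

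Your ``alternatively'' sentence does not close the gap either. It transfers complementation from $[y_k]$ to $[x_{n_k}]$, but you never show that $[y_k]$, or any subsequence of it, is complemented. That missing step is exactly what the paper uses:
\begin{enumerate}
\item Each $y_k$ is finitely supported, so $l(y_k)\to\infty$ and some subsequence of $(y_k)$ is level-block.
\item Lemma \ref{compl.JT} makes its span complemented. It uses level-block biorthogonal functionals in $\B$, obtained through the norm-one level projections of Proposition \ref{P_A}, with the upper estimate from Corollary \ref{u-est}.
\item Proposition 1.a.9 of \cite{L-T:77}, as in the ``moreover'' part of Proposition \ref{P7}, transfers this to the corresponding subsequence of $(x_{n_k})$.
\end{enumerate}
So your stated aim of avoiding level-block arguments is what creates the difficulty; they are the natural tool here.

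Finally, your normalization remark is inaccurate. If $\|x_n\|\to 0$, no subsequence can be equivalent to either basis. The corollary must therefore be read, as in the Introduction, for seminormalized basic sequences, and you should simply assume this rather than claim normalization is harmless.
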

We close the paper with the following.
\begin{question}  (i) Is it true that every non reflexive subspace $Y$ of $JT^*$ contains isomorphically the space $J^*$?

(ii) Is it true that every non separable  subspace $Y$ of $JT^*$ contains isomorphically the space $\B$?
\end{question}
Notice that the norm in $J^*$ or $JT^*$ is not explicitly described.
\bigskip

\noindent\textbf{Declarations.}

\begin{itemize}
  \item No funding was received to assist with the preparation of this manuscript.
  \item The authors have no competing interests to declare that are relevant to the content of this article.
\end{itemize}
\bigskip

\noindent
\textbf{Acknowledgement.} We thank the referee for several comments that helped us to improve the paper.

\end{document}